\documentclass[11pt]{amsart}

\usepackage[all]{xy}
\usepackage{mathrsfs}
\usepackage{amsmath, amsthm, amssymb, amscd, amsgen,amsxtra,amsfonts, amsbsy}
\allowdisplaybreaks
\usepackage{verbatim}

\usepackage{tikz}
\usepackage{tikz-cd}
\usepackage{color}
\definecolor{shadecolor}{gray}{0.875}
\definecolor{dblue}{rgb}{0,0,.6}
\usepackage[colorlinks=true, linkcolor=dblue, citecolor=dblue, filecolor = dblue, menucolor = dblue, urlcolor = dblue, pagebackref]{hyperref}
\usepackage{mathtools}
\usepackage{extarrows}
\usepackage{ifthen}

\usepackage{graphicx}
\usepackage{subcaption}

\newcommand{\mathds}[1]{{\mathbb #1}}

\numberwithin{equation}{section}

\begin{document}
\theoremstyle{definition}
\newtheorem{Definition}{Definition}[section]
\newtheorem*{Definitionx}{Definition}
\newtheorem{Convention}{Definition}[section]
\newtheorem{Construction}{Construction}[section]
\newtheorem{Example}[Definition]{Example}
\newtheorem{Examples}[Definition]{Examples}
\newtheorem{Remark}[Definition]{Remark}
\newtheorem*{Remarkx}{Remark}
\newtheorem{Remarks}[Definition]{Remarks}
\newtheorem{Caution}[Definition]{Caution}
\newtheorem{Conjecture}[Definition]{Conjecture}
\newtheorem*{Conjecturex}{Conjecture}
\newtheorem{Question}[Definition]{Question}
\newtheorem{Hypothesis}[Definition]{Hypothesis}
\newtheorem*{Questionx}{Question}
\newtheorem*{Acknowledgements}{Acknowledgements}
\newtheorem*{Notation}{Notation}
\newtheorem*{Organization}{Organization}
\newtheorem*{Disclaimer}{Disclaimer}
\theoremstyle{plain}
\newtheorem{Theorem}[Definition]{Theorem}
\newtheorem*{Theoremx}{Theorem}

\newtheorem*{thmA}{Theorem A}
\newtheorem*{thmB}{Theorem B}

\newtheorem{Claim}[Definition]{Claim}
\newtheorem*{Claim*}{Claim}
\newtheorem{Proposition}[Definition]{Proposition}
\newtheorem*{Propositionx}{Proposition}
\newtheorem{Lemma}[Definition]{Lemma}
\newtheorem{Corollary}[Definition]{Corollary}
\newtheorem*{Corollaryx}{Corollary}
\newtheorem{Fact}[Definition]{Fact}
\newtheorem{Facts}[Definition]{Facts}
\newtheoremstyle{voiditstyle}{3pt}{3pt}{\itshape}{\parindent}%
{\bfseries}{.}{ }{\thmnote{#3}}%
\theoremstyle{voiditstyle}
\newtheorem*{VoidItalic}{}
\newtheoremstyle{voidromstyle}{3pt}{3pt}{\rm}{\parindent}%
{\bfseries}{.}{ }{\thmnote{#3}}%
\theoremstyle{voidromstyle}
\newtheorem*{VoidRoman}{}

\newenvironment{specialproof}[1][\proofname]{\noindent\textit{#1.} }{\qed\medskip}
\newcommand{\blowup}{\rule[-3mm]{0mm}{0mm}}
\newcommand{\cal}{\mathcal}
\newcommand{\Aff}{{\mathds{A}}}
\newcommand{\BB}{{\mathds{B}}}
\newcommand{\CC}{{\mathds{C}}}
\newcommand{\BC}{{\mathds{C}}}
\newcommand{\EE}{{\mathds{E}}}
\newcommand{\FF}{{\mathds{F}}}
\newcommand{\GG}{{\mathds{G}}}
\newcommand{\HH}{{\mathds{H}}}
\newcommand{\NN}{{\mathds{N}}}
\newcommand{\ZZ}{{\mathds{Z}}}
\newcommand{\PP}{{\mathds{P}}}
\newcommand{\QQ}{{\mathds{Q}}}
\newcommand{\BQ}{{\mathds{Q}}}
\newcommand{\RR}{{\mathds{R}}}
\newcommand{\BA}{{\mathds{A}}}
\newcommand{\Liea}{{\mathfrak a}}
\newcommand{\Lieb}{{\mathfrak b}}
\newcommand{\Lieg}{{\mathfrak g}}
\newcommand{\Liem}{{\mathfrak m}}
\newcommand{\ideala}{{\mathfrak a}}
\newcommand{\idealb}{{\mathfrak b}}
\newcommand{\idealg}{{\mathfrak g}}
\newcommand{\idealm}{{\mathfrak m}}
\newcommand{\idealp}{{\mathfrak p}}
\newcommand{\idealq}{{\mathfrak q}}
\newcommand{\idealI}{{\cal I}}
\newcommand{\lin}{\sim}
\newcommand{\num}{\equiv}
\newcommand{\dual}{\ast}
\newcommand{\iso}{\cong}
\newcommand{\homeo}{\approx}
\newcommand{\mm}{{\mathfrak m}}
\newcommand{\pp}{{\mathfrak p}}
\newcommand{\qq}{{\mathfrak q}}
\newcommand{\rr}{{\mathfrak r}}
\newcommand{\pP}{{\mathfrak P}}
\newcommand{\qQ}{{\mathfrak Q}}
\newcommand{\rR}{{\mathfrak R}}
\newcommand{\OO}{{\cal O}}
\newcommand{\CO}{{\cal O}}
\newcommand{\CN}{{\cal N}}
\newcommand{\numero}{{n$^{\rm o}\:$}}
\newcommand{\mf}[1]{\mathfrak{#1}}
\newcommand{\mc}[1]{\mathcal{#1}}
\newcommand{\into}{{\hookrightarrow}}
\newcommand{\onto}{{\twoheadrightarrow}}
\newcommand{\Spec}{{\rm Spec}\:}
\newcommand{\BigSpec}{{\rm\bf Spec}\:}
\newcommand{\Spf}{{\rm Spf}\:}
\newcommand{\Proj}{{\rm Proj}\:}
\newcommand{\Pic}{{\rm Pic }}
\newcommand{\Mov}{{\rm Mov }}
\newcommand{\Nef}{{\rm Nef }}
\newcommand{\MW}{{\rm MW }}
\newcommand{\Br}{{\rm Br}}
\newcommand{\NS}{{\rm NS}}
\newcommand{\Sym}{\operatorname{Sym}}
\newcommand{\Aut}{{\rm Aut}}
\newcommand{\Autp}{{\rm Aut}^p}
\newcommand{\ord}{{\rm ord}}
\newcommand{\pr}{\mathrm{pr}}
\newcommand{\coker}{{\rm coker}\,}
\newcommand{\divisor}{{\rm div}}
\newcommand{\Def}{{\rm Def}}
\newcommand{\rank}{\mathop{\mathrm{rank}}\nolimits}
\newcommand{\sm}{\mathop{\mathrm{sm}}\nolimits}
\newcommand{\Ext}{\mathop{\mathrm{Ext}}\nolimits}
\newcommand{\EXT}{\mathop{\mathscr{E}{\kern -2pt {xt}}}\nolimits}
\newcommand{\Hom}{\mathop{\mathrm{Hom}}\nolimits}
\newcommand{\HOM}{\mathop{\mathscr{H}{\kern -3pt {om}}}\nolimits}
\newcommand{\chari}{\mathop{\mathrm{char}}\nolimits}
\newcommand{\ch}{\mathop{\mathrm{ch}}\nolimits}
\newcommand{\CH}{\mathop{\mathrm{CH}}\nolimits}
\newcommand{\supp}{\mathop{\mathrm{supp}}\nolimits}
\newcommand{\codim}{\mathop{\mathrm{codim}}\nolimits}
\newcommand{\IMAGE}{\mathop{\mathrm{Im}}\nolimits}
\newcommand{\Span}{\mathop{\mathrm{Span}}\nolimits}
\newcommand{\DIV}{\mathop{\mathrm{div}}\nolimits}
\newcommand{\calA}{\mathscr{A}}
\newcommand{\calH}{\mathscr{H}}
\newcommand{\calL}{\mathscr{L}}
\newcommand{\calM}{\mathscr{M}}
\newcommand{\bcalM}{\overline{\mathscr{M}}}
\newcommand{\calN}{\mathscr{N}}
\newcommand{\calX}{\mathscr{X}}
\newcommand{\calK}{\mathscr{K}}
\newcommand{\calD}{\mathscr{D}}
\newcommand{\calY}{\mathscr{Y}}
\newcommand{\calC}{\mathscr{C}}
\newcommand{\calS}{\mathscr{S}}
\newcommand{\calE}{\mathscr{E}}
\newcommand{\calF}{\mathscr{F}}
\newcommand{\calG}{\mathscr{G}}
\newcommand{\CM}{\mathcal{M}}
\newcommand{\CK}{\mathcal{K}}
\newcommand{\CV}{\mathcal{V}}
\newcommand{\piet}{{\pi_1^{\rm \acute{e}t}}}
\newcommand{\Het}[1]{{H_{\rm \acute{e}t}^{{#1}}}}
\newcommand{\Hfl}[1]{{H_{\rm fl}^{{#1}}}}
\newcommand{\Hcris}[1]{{H_{\rm cris}^{{#1}}}}
\newcommand{\HdR}[1]{{H_{\rm dR}^{{#1}}}}
\newcommand{\hdR}[1]{{h_{\rm dR}^{{#1}}}}
\newcommand{\loc}{{\rm loc}}
\newcommand{\et}{{\rm \acute{e}t}}
\newcommand{\defin}[1]{{\bf #1}}

\renewcommand{\HH}{{\rm{H}}}

\ifthenelse{\equal{1}{1}}{
    \newcommand{\blue}[1]{{\color{blue}#1}}
    \newcommand{\green}[1]{{\color{green}#1}}
    \newcommand{\red}[1]{{\color{red}#1}}
    \newcommand{\purple}[1]{{\color{purple}#1}}
}{
    \newcommand{\blue}[1]{}
    \newcommand{\green}[1]{}
    \newcommand{\red}[1]{}
    \newcommand{\purple}[1]{}
}

\title{Isotrivial smooth curves on surfaces}

\author{Xi Chen}
\address{632 Central Academic Building, University of Alberta, Edmonton, Alberta T6G 2G1, Canada}
\email{xichen@math.ualberta.ca}

\author{Frank Gounelas}
\address{Mathematik, Universit\"at Bonn, Endenicher Allee 60, 53115 Bonn, Germany}
\email{gounelas@math.uni-bonn.de}

\date{\today}
\subjclass[2020]{14J28, 14H10, 14C20, 14J20}
\keywords{Isotrivial family of curves, moduli map, K3, hypersurface, DPC}

\begin{abstract}
    We prove that a smooth projective non-uniruled surface of Picard rank one, generated by an ample and base point free line bundle, cannot be covered by an isotrivial family of smooth curves in the primitive polarisation.
\end{abstract}

\maketitle
\tableofcontents

\section{Introduction}

In this paper, we study the existence of one-parameter isotrivial families of curves on a smooth projective surface $S$. It is easy to construct examples of such families, e.g., projective space or Fermat hypersurfaces - see Section \ref{sec:k3hyps} for a list of these and other examples in the K3 case. It is expected though, that ``most'' surfaces do not admit isotrivial families of curves, singular or not. 

The main purpose of this paper is to rule this out from happening with smooth curves in the primitive polarisation of a surface of Picard rank one, without any genericity assumptions on the surface.

\phantomsection\label{thm:A}
\begin{thmA}
    Let $S$ be a smooth projective surface such that $\NS(S) = \mathbb{Z} H$ for $H$ ample and base point free and suppose that $K_S+H$ is nef. Then the moduli morphism
    \[ \begin{tikzcd} {\mu\colon |H|_{\sm}} \arrow[r] & \mathcal{M}_g \end{tikzcd} \]
    is quasi-finite, i.e., has finite fibres.
\end{thmA}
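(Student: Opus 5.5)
Suppose, for contradiction, that $\mu$ has a positive-dimensional fibre. Then there is a smooth projective curve $B$ with a non-constant map $B\to |H|_{\sm}$ along which all fibres are isomorphic to a fixed curve $C$ of genus $g$. After a finite base change $B'\to B$, the family $\mathcal{C}\to B'$ becomes trivial, $\mathcal{C}\cong C\times B'$. The total space then maps to $S$ by a morphism
\[ f\colon C\times B'\to S \]
whose restriction to each $C\times\{b\}$ is an embedding onto a member $C_b\in|H|$. Since the $C_b$ vary, $f$ is dominant, hence generically finite. The strategy is to play the product structure of $C\times B'$ against the hypotheses $\NS(S)=\ZZ H$ and $K_S+H$ nef.

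First I compute $f^*H$. Restricting to the two rulings gives
\[ f^*H\equiv aF_1+ bF_2 \quad\text{modulo classes pulled back from } \Hom(J(C),J(B')), \]
where $F_1=\{c\}\times B'$ and $F_2=C\times\{b\}$. Here $b=(f^*H\cdot F_2)=H^2$ because each $C_b$ is embedded in $|H|$. Also $a=f^*H\cdot F_1=H\cdot f_*F_1$. The curves $f(\{c\}\times B')$ are traces of a point of $C$ moving in $S$. Since $\NS(S)=\ZZ H$, each such trace is numerically $mH$, so $a=m\deg(f|_{F_1})H^2$.

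The key equation comes from adjunction. Since $f|_{F_2}$ is an embedding with image in $|H|$, the normal bundle of $C_b$ is $H|_{C_b}$ and $K_C=(K_S+H)|_{C_b}$. Now compare $f^*K_S$ with $K_{C\times B'}=p_1^*K_C+p_2^*K_{B'}$ via the ramification divisor:
\[ K_{C\times B'}=f^*K_S+R,\qquad R\ge 0. \]
Restricting to $F_2$ gives $2g-2=K_S\cdot H+R\cdot F_2$. Adjunction gives $2g-2=K_S\cdot H+H^2$, so $R\cdot F_2=H^2$. Intersecting instead with $f^*H$, and using $K_S\equiv kH$ (possible since $\NS=\ZZ H$) together with nefness of $K_S+H$, i.e. $k\ge -1$, I aim to derive the inequality $(f^*H)^2\le$ something incompatible with $(f^*H)^2=\deg f\cdot H^2$. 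The relevant identities are
\[ (f^*H)^2=2ab+(\text{correspondence term}), \]
with the Hodge index theorem on $C\times B'$ bounding the correspondence term by $\le 0$, and
\[ R\cdot f^*H = (2g-2)a + (2g(B')-2)b - k\deg f\, H^2. \]

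The ramification bound is the step I expect to be the main obstacle. I need an a priori upper bound on $R\cdot f^*H$, or equivalently a lower bound on $\deg f$. I would obtain it from the fact that the differential $df$ at a point of $C\times\{b\}$ restricted to the normal direction is the Kodaira–Spencer section in $H^0(C_b,N_{C_b})=H^0(H|_{C_b})$ of the deformation. That section vanishes exactly on $R\cap F_2$, a divisor of degree $H^2$, so $R|_{F_2}\in|H|_{C_b}|$. Because the family is isotrivial, this section must come from the infinitesimal automorphism structure. Concretely, the divisor of tangency points of nearby members lies in $|H|_{C_b}|$, while the trace curves $f(\{c\}\times B')$ are numerically $H$-multiples.

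Base point freeness of $H$ enters through a second construction. I would take the map $\phi_H\colon S\to\PP^N$ and consider $C_b=\phi_H^{-1}(\text{hyperplane})$. Isomorphisms $C_{b}\cong C_{b'}$ preserving $H|_C$ up to the finite group $\Aut(C)$ then force the hyperplane sections to be projectively equivalent, so only finitely many of them can be isomorphic after accounting for $\Aut(C)$. This step alone would leave a gap when $H|_{C_b}$ varies in $\Pic(C)$, so it has to be combined with the numerical computation. Assembling the pieces should yield $k<-1$ or $\deg f\le 0$, contradicting the nefness of $K_S+H$.
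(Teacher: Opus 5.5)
Your proposal has a genuine gap, and it starts with the setup. A non-constant map from a \emph{projective} curve $B$ into $|H|_{\sm}$ cannot exist. Its image would be a complete curve in $|H|\cong\PP^N$, so it would meet the discriminant hypersurface, which is nonempty. So a positive-dimensional fibre of $\mu$ is a non-complete curve, its closure $D$ contains singular members of $|H|$, and after trivialising the family you only get a \emph{rational} map $C\times D'\dashrightarrow S$. Over a singular member, the strict transform of $C\times\{p\}$ is contracted to a point, because the image has smaller geometric genus. One must therefore pass to a resolution $Z\to C\times D'$ whose special fibres contain a copy $Q\cong C$ together with a chain of rational curves.

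The identities you rely on hold only in the situation that the discriminant argument already excludes: $f$ a morphism, $K_{C\times B'}=f^*K_S+R$ with $R\ge 0$, $R\cdot F_2=H^2$ on every fibre, and $(f^*H)^2=\deg f\cdot H^2$. So no numerical contradiction drawn from them can prove the theorem; all the content lies in the singular limits you have assumed away. (Incidentally, if $f^*H\equiv aF_1+bF_2+\gamma$, then $f^*H\cdot F_2=a$, not $b$.)

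Even within your setup the argument is not carried through. The bound on $R\cdot f^*H$ is left open, and the claim that the normal section ``comes from the infinitesimal automorphism structure'' is never made precise. The projective-equivalence step is false as stated. Nothing you say rules out a one-parameter family of mutually projectively equivalent hyperplane sections; the sections $x_0=tx_1$ of the Fermat quartic form exactly such a family. So ``only finitely many can be isomorphic'' does not follow, and Picard rank one is never used to rescue it.

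More broadly, you use $\NS(S)=\ZZ H$ only numerically, never through its key consequence that every member of $|H|$ is integral. The paper's proof rests on that fact: it applies Theorem~\ref{K3ISOTRIVIALTHMNORMAL} to the closed family over $D$, showing that no fibre can be integral and singular. That theorem considers $h^0$ of $\phi^*(n(K_X+W)+\pi^*A)-\Gamma_{s,m}$, which by normality of $W$ can be computed either on $W$ or on $Z$.
\begin{itemize}
\item On $W$ it is constant for $s$ in a Zariski open set, by flatness of the iterated blow-ups (Proposition~\ref{K3ISOTRIVIALLEMCARTIER}).
\item On $Z$, Zariski decomposition and Kawamata--Viehweg vanishing reduce its variation to $h^1(\OO_{\Delta_p}(-\Gamma_{s,m}))$, with $\Delta_p$ supported on $Q+R_1+\dots+R_{\mu-1}$.
\item This term jumps when $(2g-1)q-s_1-\dots-s_g+m(q-s_{g+1})$ is effective, which Proposition~\ref{K3ISOTRIVIALPROPJACOBIAN} guarantees for suitable $s$ and $m\gg 0$.
\end{itemize}
Nefness of $K_S+H$ enters only to give $g\ge 2$ and ampleness of $K_S+H$, which supply the vanishing and surjectivity hypotheses. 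It is not used as an inequality $k\ge -1$ to be violated.
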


We note that quasi-finiteness cannot in general be improved to finiteness, i.e., properness of $\mu$, even in the primitive setting above; see Remark \ref{rem:properness} for a discussion.

The proof actually implies something more general for families of surfaces which may vary in moduli (see Theorem \ref{K3ISOTRIVIALTHMNORMAL}), another special case of which we state as follows.

\begin{thmB}\label{thm:B}
    Let $S$ be a smooth projective surface with $K_S$ nef, and $H$ an ample and base point free divisor. Let $\mu \colon |H|_{\sm} \to \mathcal{M}_g$ be the moduli morphism. If there exists a complete curve $D \subset |H|$ on which $\mu$ is constant, then $D$ must contain a reducible or non-reduced curve.
\end{thmB}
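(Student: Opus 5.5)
Suppose, for contradiction, that $D\subset|H|$ is a complete curve on which $\mu$ is constant and every member $C_t$, $t\in D$, is reduced and irreducible. The plan is to show that the resulting family of curves would force positivity properties of $K_S$ (equivalently of the relative canonical bundle) that clash with $K_S$ nef and $H$ ample.

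\textbf{Step 1: reductions.} Normalise $D$ to a smooth complete curve $B\to D$ and let $\mathcal{C}\subset S\times B$ be the pulled-back family. Since $H$ is base point free and ample, the curves in $D$ cover $S$, so the projection $f\colon\mathcal{C}\to S$ is generically finite and surjective. We have $\mathcal{C}\in|p_1^*H+p_2^*L|$ for some $L$ on $B$ with $\deg L>0$, since $D$ is a nonconstant curve in the projective space $|H|$. By hypothesis the general fibre is smooth of genus $g$ with fixed moduli. Every fibre is integral, so it lies in the closure of an orbit with constant moduli. Passing to a finite base change $B'\to B$ trivialises the smooth locus, and the family becomes birational to $C\times B'$, where $C$ is the fixed curve.

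\textbf{Step 2: compare canonical classes.} On $\mathcal{C}$, adjunction gives
\[K_{\mathcal{C}}=(K_S+H)\boxtimes(K_B+L)\big|_{\mathcal{C}}.\]
On the isotrivial side, $\omega_{\mathcal{C}/B}$ restricted to a fibre is $K_C$. The key point is that for an isotrivial family whose singular fibres are all integral, the relative dualising sheaf has self-intersection
\[\omega_{\mathcal{C}/B}^2=0\]
after base change, because on the trivial model $\omega^2_{C\times B'/B'}=0$. Integrality of the fibres is needed here: it rules out reducible components contributing correction terms, so one must check that the birational modification to $C\times B'$ only changes things through singularities of integral fibres. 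I would try to prove this by semistable reduction, arguing that integral singular fibres in an isotrivial family with smooth general fibre must become smooth after base change and normalisation, and that each such normalisation step drops $\omega^2$ by a nonnegative amount.

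\textbf{Step 3: contradiction.} Compute $\omega_{\mathcal{C}/B}^2$ directly from the embedding:
\[\omega_{\mathcal{C}/B}=(K_S+H)\boxtimes L\big|_{\mathcal{C}},\]
so
\[\omega_{\mathcal{C}/B}^2=\big((K_S+H)\boxtimes L\big)^2\cdot(H\boxtimes L)=(K_S+H)^2\deg L+2(K_S+H)\cdot H\,\deg L.\]
Since $K_S$ is nef and $H$ ample, $(K_S+H)\cdot H>0$, so this quantity is strictly positive. The passage from the possibly singular $\mathcal{C}$ to its smooth model only decreases $\omega^2$ by local contributions that Step 2 controls, and this yields $0\ge$ a positive number, a contradiction.

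The main obstacle is Step 2: bounding the correction to $\omega^2$ coming from the singular integral fibres and from singularities of $\mathcal{C}$ at the base points of the pencil-like family, uniformly enough that the positivity in Step 3 survives. I would first try to handle it via a local analysis of isotrivial degenerations with irreducible fibres, showing they are quotient-type and give no negative contribution.
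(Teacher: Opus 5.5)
There is a genuine gap in Step~2, and the inequality you need in Step~3 runs the wrong way. The number $\omega^2$ is not a birational invariant of the family. Passing from the embedded model to the trivial one \emph{lowers} it, which is exactly your ``each step drops $\omega^2$ by a nonnegative amount''. So your scheme yields only $\omega^2_{\mathcal{C}/B}\ge 0$, which is consistent with the positive value you compute.

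Nor can a local analysis of integral fibres give $\omega^2=0$. After base change, $\mathcal{C}'=\mathcal{C}\times_B B'$ is still a Cartier divisor with reduced fibres in $S\times B'$, hence already normal (Lemma~\ref{K3ISOTRIVIALLEMIMAGE}). So normalisation does nothing, and the integral singular fibres survive. The comparison with $C\times B'$ instead goes through point blow-ups $\psi\colon Z\to C\times B'$ and a birational morphism $\phi\colon Z\to\mathcal{C}'$. At an integral singular fibre, the strict transform $Q\cong C$ of $C\times\{p\}$ has genus $g$, which exceeds the geometric genus of $\mathcal{C}'_p$, so $\phi$ \emph{contracts} $Q$.

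Since $\mathcal{C}'$ is Gorenstein, $K_Z=\phi^*K_{\mathcal{C}'}+\mathcal{E}$ with $\mathcal{E}$ $\phi$-exceptional, and $\omega^2_{\mathcal{C}'/B'}=K^2_{Z/B'}-\mathcal{E}^2$. With $Z_p=Q+R_1+\cdots+R_\mu$ as in Lemma~\ref{K3ISOTRIVIALLEMDELTA}, one finds $\mathcal{E}_p=-(2g-1)(\mu Q+(\mu-1)R_1+\cdots+R_{\mu-1})$. Hence $-\mathcal{E}_p^2=(2g-1)^2\mu$, while the $\mu$ blow-ups contribute $-\mu$ to $K^2_{Z/B'}$. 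Altogether $\omega^2_{\mathcal{C}'/B'}=4g(g-1)\sum_p\mu_p$, so every integral singular fibre contributes strictly positively. Reducible fibres do the same in the genuinely isotrivial Fermat quartic pencil, where your Step~3 number is $12$. Equating this with Step~3 gives a numerical constraint, not a contradiction. For a family with integral fibres, ``$\omega^2=0$'' is equivalent to having no singular fibres, i.e.\ to the theorem itself. Your computation does settle the easy case where every member of $D$ is smooth: a finite \'etale base change then trivialises the family. The paper handles that case via ampleness of the discriminant. The integral singular members, however, are the whole content of Theorem~B.

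The missing idea is an invariant fine enough to see \emph{where} on $C$ the rational tail is attached; a single intersection number averages this away. The paper fixes $g+1$ points $s_i\in C$ and studies $h^0\big(Z,\phi^*(n(K_X+W)+\pi^*A)-\Gamma_{s,m}\big)$ for $\deg A\gg n\gg m$. By normality of $W$, this counts sections on $W$ vanishing along $\phi(\Gamma_{s_1}),\dots,\phi(\Gamma_{s_g})$ and to order $m$ along $\phi(\Gamma_{s_{g+1}})$.

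On the $W$ side, iterated blow-ups of these sections and the relative ampleness of $K_X+W=K_S+H$ turn this into an Euler characteristic. It is constant on a Zariski open $U\subset C^{g+1}$ that does not depend on $m$; Proposition~\ref{K3ISOTRIVIALLEMCARTIER} is what makes $U$ Zariski open.

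On the $Z$ side, a Zariski decomposition and Kawamata--Viehweg show that it equals $\chi$ plus $h^1(\OO_{\Delta_p}(-\Gamma_{s,m}))$. Here $\Delta_p$ is supported exactly on the contracted curves $Q,R_1,\dots,R_{\mu-1}$. Filtering by $Q$ reduces the jump to whether $((2g-1)q-s_1-\cdots-s_g)+m(q-s_{g+1})$ is effective, where $q$ is the attaching point. Proposition~\ref{K3ISOTRIVIALPROPJACOBIAN} shows this holds for some $s\in U$ and large $m$, but fails for general $s$. That non-numerical jump is the contradiction.
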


The main applications we have in mind, and for which, to our knowledge, the above result was not known, are to K3 surfaces and to surfaces in $\PP^3$ of low degree. When the degree of the K3 or hypersurface in question is large enough, stability restriction theorems kick in and rule this out from happening (see Section \ref{sec:k3hyps}).

Note that it is easy to rule out smooth isotrivial families of curves over a compact base (i.e., having no singular fibres), since such a compact subvariety in the linear system must meet the ample discriminant divisor. It is also somewhat understood by the experts that the above is true for a very general K3 surface (see Theorems \ref{thm:moonen}, \ref{thm:totaro}, \ref{thm:cds}) and hypersurface in $\PP^3$, so the novel contributions are really
\begin{enumerate}
    \item that the more explicit statement that this is true for all surfaces of Picard rank one holds,
    \item and the method of proof.
\end{enumerate}
Whether there exists any K3 surface with no isotrivial families of (singular) curves remains open still (see Section \ref{sec:k3hyps}).

The existence of smooth isotrivial families in multiples of the primitive polarisation on surfaces of Picard rank one also remains open in general. If the self-intersection is large enough then it is known by stability restriction results (see Section \ref{sec:k3hyps}), but in general (or to give a more geometric argument) one must analyse  stable reductions of limits of generically smooth isotrivial families. We aim to address this in a future paper.

It is worth noting that the question in the opposite direction, namely that of maximal variation of a linear system, is much better understood. For example \cite{DuttaHuybrechts} observed that the Matsushita conjecture for Lagrangian fibrations, recently settled by Bakker \cite{bakker}, implies that for any pair of a K3 surface and an ample base point free linear system $H$, the induced rational map \[ \begin{tikzcd} {|H|} \arrow[r, dashed] & \mathcal{M}_g \end{tikzcd} \] is generically finite onto its image. See also the recent preprints \cite{beauville2025maximal} and \cite{bricallipirola2026maximal} for related developments for other surfaces. 

As far as existence results go in the two main examples, from \cite{regenerationinfinite, maxmoduli}, for any genus $g\geq0$ and any K3 surface $X$, there exist infinitely many irreducible (potentially singular) curves $C\subset X$ of distinct class in the Picard group which vary maximally in moduli in their linear system (i.e., each varies in a different $g$-dimensional family). For hypersurfaces, an answer to this question is only known in the very general case. In that setting, Xu \cite{xu1994} showed that every irreducible curve on a very general surface of degree $d\geq5$ in $\PP^3$ has geometric genus at least $\tfrac{d(d-3)}{2}-2$, so that in particular no rational or low-genus curves occur, and in \cite{CFZ} they study the gaps in the geometric genera realised by such curves.

We also note that varieties whose generic hyperplane sections are projectively isomorphic (i.e., minimal variation, or that the moduli map is a contraction) have been studied and classified in \cite{beauville1990,mckernan1993, pardini1994}.

A few words on the proof of the above theorems, which proceeds by contradiction. First one takes a resolution and finite cover to make the family of curves trivial, and then a minimal resolution $Z$ of the resulting rational map to $S$ as follows
\[
\begin{tikzcd}[row sep=large, column sep=large]
    Z \arrow[d, "\psi"'] \arrow[r, "\phi"] & W \arrow[d, hook] \\
    C\times D \arrow[r, dashed, "f"] \arrow[dr, dashed] & S\times D \arrow[d, "\pi_1"] \\
    & S
\end{tikzcd}
\]
where $W$ denotes the image of $Z$ in $S\times D$. Since the trivialised family $C\times D\to D$ is smooth and $\psi$ is a sequence of blow-ups of points, $Z\to D$ is a semistable reduction of the family of curves: $Z$ is smooth and every fibre of $Z/D$ is reduced with normal crossings. 
The contradiction is then derived by comparing two Euler characteristic computations of twists of multiples of $K_{S\times D}+W$ by various general points of $C$ on both $W$ and on $Z$. The discrepancy between the two numbers comes about by contributions of curves in $Z$ which are contracted to $W$, creating singularities in the fibres of $W/D$, i.e., the components of the semistable reductions of the flat limits of the isotrivial family. The whole computation boils down to showing that in this setup, $C$ supports certain Brill--Noether special line bundles. Concretely, the crux of the reduction is to show that the divisor
\[
    \big((2g-1)q - s_1 - \dots - s_g\big) + m\,(q - s_{g+1}) \ \in\ \Pic^{g-1}(C)
\]
has sections for a sufficiently large positive integer $m$, where $q\in C$ is the special point over which the rational tail of the semistable reduction is attached and $s_1,\dots,s_{g+1}$ are $g+1$ general points of $C$. This follows by an intersection-theoretic computation with the theta divisor in the Jacobian of $C$ (see Section \ref{sec:thetadiv}).

\textbf{Conventions.}
A variety is always reduced but might be reducible and we work over $\CC$ throughout. A family of curves will be called isotrivial if any two general members (sometimes all, depending on context) are isomorphic.
We will often abuse notation and call a curve $C\subset X$ \textit{isotrivial} if it moves in an isotrivial family in $X$, or use expressions such as ``smooth isotrivial family of curves'' to refer to a smooth projective family of curves over an irreducible quasi-projective base all of whose closed geometric fibres are isomorphic.

\textbf{Acknowledgments} This paper benefited from many conversations with Yajnaseni Dutta and we would like to thank her. We would also like to thank A. Beauville for comments on a first draft. Anthropic's Fable LLM found a mistake in a first draft of this paper (see Example \ref{K3ISOTRIVIALEXAMPLEFABLE}).

\section{Background on isotrivial families of curves on surfaces}\label{sec:prelim}

In this section we collect some preliminary results and complementary remarks to \hyperref[thm:A]{Theorem~A}.

We start with the following easy fact, ruling out complete isotrivial families of smooth (or with mild singularities) curves.

\begin{Lemma}\label{lem:unramified}
    Let $S$ be a smooth projective surface over $\CC$. If there exist two smooth projective curves $C$
    and $D$ and a dominant morphism $f\colon Y = C\times D \to S$ such that $f\colon Y_t\to S$ is unramified for
    all fibres $Y_t\cong C$ of $Y/D$, then $c_2(S) = c_1(S) \cdot A \in \CH_\QQ^2(S)$ for some $A \in \Pic_\QQ(S)$.
\end{Lemma}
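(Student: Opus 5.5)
The plan is to compare Chern classes through the differential of $f$ and then push forward to $S$. Write $p\colon Y=C\times D\to C$ for the first projection. The relative tangent bundle of $Y/D$ is $T_{Y/D}\cong p^*T_C$, a line bundle. Composing the inclusion $T_{Y/D}\subset T_Y$ with $df\colon T_Y\to f^*T_S$ gives a map of vector bundles
\[ \alpha\colon p^*T_C \longrightarrow f^*T_S . \]
Restricted to a fibre $Y_t\cong C$, $\alpha$ is exactly the differential of $f|_{Y_t}\colon Y_t\to S$. By hypothesis $f|_{Y_t}$ is unramified for \emph{every} $t\in D$. Hence $\alpha$ is injective on fibres at every point of $Y$, not only generically.

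This pointwise injectivity is the main thing to check. It means $\alpha$ is a sub-bundle inclusion, so $Q:=\coker\alpha$ is a line bundle on $Y$ (not merely a rank-one sheaf with torsion). This is the only place the hypothesis is used. If it failed at finitely many points, $Q$ would acquire a torsion part and $c_2$ would pick up a zero-cycle correction, which would destroy the argument.

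Given the exact sequence $0\to p^*T_C\to f^*T_S\to Q\to 0$ of vector bundles, the Whitney formula in $\CH^*(Y)$ gives
\[ c_1(Q)=f^*c_1(S)+p^*K_C,\qquad f^*c_2(S)=c_1(p^*T_C)\cdot c_1(Q)=-p^*K_C\cdot\big(f^*c_1(S)+p^*K_C\big). \]
Since $(p^*K_C)^2=p^*(K_C^2)=0$ on the curve $C$, this simplifies to $f^*c_2(S)=f^*c_1(S)\cdot B$ with $B:=-p^*K_C\in\Pic(Y)$.

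Finally, push forward along $f$. Since $f$ is dominant between surfaces, it is proper and generically finite of some degree $d>0$. Since $S$ is smooth, $f^*$ is defined on Chow groups and $f_*f^*=d\cdot\mathrm{id}$. The projection formula then gives
\[ d\cdot c_2(S)=f_*\big(f^*c_1(S)\cdot B\big)=c_1(S)\cdot f_*B . \]
Setting $A:=\tfrac1d f_*B\in\Pic_\QQ(S)$, where $f_*B$ is a divisor class on the smooth surface $S$, yields $c_2(S)=c_1(S)\cdot A$ in $\CH^2_\QQ(S)$. Apart from the sub-bundle check above, every step is formal.
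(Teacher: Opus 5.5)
Your proof is correct and is essentially the paper's argument in dual form: the paper uses the surjection $f^*\Omega_S\to\Omega_{Y/D}$ with line-bundle kernel $L$, while you use the sub-bundle $p^*T_C\subset f^*T_S$ with line-bundle cokernel $Q$. The Whitney computation, the vanishing $(p^*K_C)^2=0$, and the pushforward to $S$ via the projection formula are the same in both.
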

\begin{proof}
    If such $C, D$ and $f$ exist, then $f^*\Omega_S \to \Omega_{Y/D}$ is surjective and we have an exact sequence
    \[
        \begin{tikzcd}
            0 \ar{r} & L \ar{r} & f^*\Omega_S \ar{r} & \Omega_{Y/D} \ar{r} & 0
        \end{tikzcd}
    \]
    for a line bundle $L$ on $Y$. It follows that
    \[
        f^* c_2(S) = c_2(f^*\Omega_S) = (f^* c_1(K_S)) (\pi^* c_1(K_C))
    \]
    where $\pi$ is the projection $Y\to C$, since $c_2(f^*\Omega_S) = c_1(L)\,c_1(\Omega_{Y/D})$ with $c_1(L) = f^*c_1(K_S) - \pi^*c_1(K_C)$, and $(\pi^*c_1(K_C))^2 = 0$. Then
    \[
        (\deg f) c_2(S) = c_1(K_S) f_* (\pi^* c_1(K_C))
    \]
    as required.
\end{proof}

The following two lemmas provide general bounds and properties for families of curves on arbitrary smooth projective varieties, which we will use in a later section.

\begin{Lemma}\label{lem:H0neq0}
    Let $X$ be a smooth projective variety and let $F\colon C\times D\dashrightarrow X$ be a dominant rational map from the product of two smooth projective curves. Then for any $d\in D$ we have
    \[
        \HH^0(C, F_d^*T_X)\neq0.
    \]
    In particular, if $C\subset X$ is a smooth projective curve such that $\HH^0(C,T_X|_C)=0$, then $C$ cannot deform isotrivially in $X$.
\end{Lemma}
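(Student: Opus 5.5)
The idea is to differentiate $F$ in the $D$-direction: the derivative along $D$ gives a section of $F_d^*T_X$, and it is nonzero because $F$ is dominant. First remove the indeterminacy of $F$. Let $\psi\colon Z\to C\times D$ be a sequence of point blow-ups such that $G=F\circ\psi\colon Z\to X$ is a morphism; $G$ is still dominant. The composite $p\colon Z\to D$ is a fibration. For every $d\in D$, the fibre $Z_d$ is the strict transform $\tilde C_d\cong C$ of $C\times\{d\}$ together with exceptional curves lying over finitely many points of $C\times\{d\}$. The map $F_d\colon C\to X$ is the restriction of the rational map $F$ to $C\times\{d\}$, extended over the finitely many indeterminacy points, since $C$ is a smooth curve. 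It agrees with $G|_{\tilde C_d}$ under $\tilde C_d\cong C$. It therefore suffices to produce a nonzero section of $G^*T_X|_{\tilde C_d}$.

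On the open set $U=C\times D\setminus\{\text{indeterminacy points}\}$, whose complement is finite, the vector field $\partial/\partial t$ coming from $T_D$ is a section of $T_{C\times D}$. Push it forward by $dF$ to get a section $\sigma\in \HH^0(U, F^*T_X)$. By Hartogs, $F^*T_X$ on $U$ is locally free on a surface minus finitely many points. We would rather work directly on $C\times\{d\}$, though: restrict the rank-one subsheaf $\mathrm{pr}_D^*T_D\subset T_{C\times D}$ to the curve $C\times\{d\}\cong C$. Since $T_D|_d$ is trivial on that curve, it is $\OO_C$. The derivative then gives a map $\OO_{C^\circ}\to F_d^*T_X|_{C^\circ}$ on the open subset $C^\circ\subset C$ where $F$ is defined. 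This is a rational section of $F_d^*T_X$ on $C$, and it can have poles at the indeterminacy points.

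The step I expect to be the main obstacle is controlling these poles. The cleanest way is to handle them on $Z$. The derivative along the fibration direction is naturally a section of $G^*T_X\otimes p^*T_D$ defined on $Z$ away from the singular points of fibres, because $dG$ applied to a lift of $\partial/\partial t$ is well defined only modulo $T_{Z/D}$. So on $\tilde C_d$ I would use instead the normal-bundle map $N_{\tilde C_d/Z}\to G^*T_X|_{\tilde C_d}/T_{\tilde C_d}$. To avoid the quotient, I would replace this by the following argument. If $\HH^0(C,F_d^*T_X)=0$ for some $d$, then by semicontinuity, applied to the flat family of maps $\tilde C_{d'}\to X$ over a neighbourhood of $d$ (the strict transforms form a smooth family near $d$), it vanishes for general $d'$. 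For general $d'$, $F$ is defined along all of $C\times\{d'\}$ away from finitely many points, and these can be avoided by choosing $d'$ outside the finitely many values $p(\text{indeterminacy})$. So $F$ is a morphism near $C\times\{d'\}$. Then $\partial_t F$ is a genuine global section of $F_{d'}^*T_X$. Finally, dominance of $F$ means $dF$ has rank $\dim X$ at a general point. If $\partial_tF$ vanished identically on the general fibre, then $F$ would factor through $C$, giving $\dim X\le 1$. In the case $\dim X\le1$ the statement is easy anyway: the tangent field of the curve itself gives the section unless $F_d$ is constant, and for $\dim X=0$ the statement is vacuous or trivial.

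The “in particular” follows by taking $F$ to be the map induced by an isotrivial deformation of $C$, after a finite base change trivialising the family as $C\times D$. That $F$ is dominant is the meaning of deforming: the curves move and cover a surface, and one reduces to $X$ being that surface's ambient space via $T_S\subset T_X|_S$ if needed. We then take $d$ with $F_d$ the inclusion of $C$.
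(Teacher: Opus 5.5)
\textbf{The gap is the semicontinuity reduction, and it cannot be repaired because the statement fails at the points it is meant to cover.}

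Suppose $C\times\{d\}$ passes through an indeterminacy point of $F$. Then there is no flat family of maps over a neighbourhood $V\ni d$ whose members are the $F_{d'}$. Such a family would be a morphism $C\times V\to X$ extending $F$, which is impossible at an indeterminacy point.

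The strict transforms do not form a flat family on $Z$ either. For $d'\neq d$ near $d$, the curve $\tilde C_{d'}$ is the whole fibre $Z_{d'}$. Its flat limit is the whole fibre $Z_d=\tilde C_d+\sum_i a_iE_i$, not $\tilde C_d$. Semicontinuity therefore only gives $h^0(Z_d,G^*T_X|_{Z_d})\ge 1$, and those sections may live entirely on the exceptional curves not contracted by $G$. In general even the degrees differ: $\deg F_d^*T_X-\deg F_{d'}^*T_X=K_X\cdot\sum_i a_iG_*E_i$.

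Here is a counterexample to the assertion at such $d$:
\begin{itemize}
\item Take $g(C)\ge 2$, $X=\mathrm{Bl}_{(c_0,d_0)}(C\times D)$, and let $F$ be the inverse of the blow-down.
\item Then $F_{d_0}$ embeds $C$ as the strict transform $\tilde C$ of $C\times\{d_0\}$, whose normal bundle is $\OO_C(-c_0)$.
\item The sequence $0\to T_C\to F_{d_0}^*T_X\to\OO_C(-c_0)\to 0$ then gives $\HH^0(C,F_{d_0}^*T_X)=0$.
\item The sections of $T_X|_{\tilde C+E}$ that semicontinuity does provide all vanish on $\tilde C$.
\end{itemize}
Your aside on $\dim X\le 1$ is also wrong. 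Taking $X=C$ and $F=\mathrm{pr}_C$ gives $F_d=\mathrm{id}_C$ and $\HH^0(C,T_C)=0$. For $X$ a point, $T_X=0$. So the lemma needs $\dim X\ge 2$.

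\textbf{What your argument does prove.} It correctly proves the statement for every $d$ with $C\times\{d\}$ disjoint from the indeterminacy locus (in particular for general $d$), when $\dim X\ge 2$. There $F$ is a morphism on $C\times V$ and $F^*T_X$ is flat over $V$. Dominance makes $\partial_tF$ nonzero on the general fibre, and semicontinuity legitimately passes this to $d$.

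This is the paper's argument in different words. Indeed, $\partial_tF|_{C\times\{d'\}}$ is the image of $\partial_t$ under the differential of $d'\mapsto[F_{d'}]$, and $T_{[F_d]}\Hom(C,X)=\HH^0(C,F_d^*T_X)$.

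The paper's proof has the same restriction: the morphism $D\to\Hom(C,X)$ exists only over that open set, for the reason given above. Your semicontinuity step is in fact slightly more careful than the paper's claim that $T_dD\to T_{[F_d]}\Hom(C,X)$ is nonzero. That claim fails at ramification points of $D\to\Hom(C,X)$, for instance after a ramified trivialising base change. At such points one should instead use that the positive-dimensional image has nonzero Zariski tangent space.

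This corrected version is all the ``in particular'' needs. After the trivialising base change, a smooth curve that itself moves isotrivially lies over such a good $d$. There the derivative argument needs neither dominance nor the swept-out surface (which may be singular), only that the family is non-constant in $t$.
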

\begin{proof}
    We get a morphism $D\to \Hom(C,X)$, which induces a non-zero map $\CC\cong T_dD\to
        T_{[F_d]}\Hom(C,X)$. As the latter tangent space is $\HH^0(C, F_d^*T_X)$ we obtain the
    result.
\end{proof}

\begin{Lemma}\label{lem:fibredim}
    Let $X$ be a non-uniruled smooth projective variety of dimension $n$, and let $F\colon C\times T\dashrightarrow X$ be a dominant rational map from the product of a smooth projective curve $C$ and
    an irreducible quasi-projective variety $T$. Then $\dim T\leq n$.
\end{Lemma}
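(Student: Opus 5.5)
The plan is to argue by contradiction using Mori's bend-and-break. We read the statement as implicitly assuming that $F$ is not constant in any $T$-direction, i.e.\ that the induced map $T \dashrightarrow \Hom(C,X)$, $t\mapsto F_t:=F|_{C\times\{t\}}$, is generically finite onto its image. Some such hypothesis is necessary: otherwise one could precompose a dominant map $C\times D\dashrightarrow X$ with a projection $T\to D$ for $T$ of arbitrarily large dimension. So we replace $T$ by a dense open subset, shrink further, and assume the following:

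\begin{itemize}
\item $F$ is a morphism on $C^\circ\times T$ for some dense open $C^\circ\subset C$;
\item each $F_t$ extends to a morphism $C\to X$, which is automatic since $C$ is a smooth curve and $X$ is projective;
\item $T\to \Hom(C,X)$ is quasi-finite, so its image $W$ has $\dim W=\dim T$.
\end{itemize}

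Suppose for contradiction that $\dim T\geq n+1$.

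First we fix a general point. The evaluation map $\mathrm{ev}\colon C\times W\to X$, $(c,[f])\mapsto f(c)$, is dominant. Hence for a general point $p\in C$ the map $\mathrm{ev}_p\colon W\to X$, $[f]\mapsto f(p)$, has image of dimension $n$ or $n-1$.
\begin{itemize}
\item If the image has dimension $n$, the fibre of $\mathrm{ev}_p$ over a general point $x\in X$ has dimension $\dim W-n\geq 1$.
\item If the image has dimension $n-1$, then the union over $p$ still dominates $X$, and a general point $x$ of $X$ lies in $\mathrm{ev}_p(W)$ for some $p$. The corresponding fibre then has dimension at least $\dim W-n+1\geq 2$.
\end{itemize}
Either way, for a general $x\in X$ there is a point $p\in C$ and an irreducible curve $B\subset W$ of \emph{pairwise distinct} morphisms $f_b\colon C\to X$ with $f_b(p)=x$ for all $b\in B$. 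Since $x$ is general and $F$ is dominant, we may also take these $f_b$ to be non-constant.

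Next we apply bend-and-break (Mori; see Koll\'ar, \emph{Rational curves on algebraic varieties}, II.5). A non-trivial one-parameter family of morphisms $C\to X$ from a fixed curve, all sending a fixed point $p$ to a fixed point $x$, cannot be a family over a complete curve without degenerating. Concretely:
\begin{itemize}
\item Take the normalisation of the closure $\bar B$ and resolve the rational map $C\times\bar B\dashrightarrow X$.
\item The section $\{p\}\times\bar B$ is contracted to $x$. If the map were defined everywhere, the rigidity lemma would force all $f_b$ to coincide, contradicting the fact that $B$ parametrises distinct maps.
\item Hence there is an indeterminacy point, and its exceptional curves give a rational curve in $X$ passing through $x$.
\end{itemize}

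Since $x$ was general, $X$ is covered by rational curves, i.e.\ $X$ is uniruled. This contradicts the hypothesis, and so $\dim T\leq n$.

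The main obstacle is the bend-and-break step, specifically ensuring that the rational curve produced actually passes through the general point $x$. The standard version of the lemma does give this: the indeterminacy must occur along $\{p\}\times\bar B$, because elsewhere the map can be extended after taking the limit in the Chow variety. Alternatively, one can invoke directly the formulation in Koll\'ar II.5.4: if $\dim_{[f]}\Hom(C,X;\,p\mapsto x)\geq 1$, then $x$ lies on a rational curve. Quoting that formulation avoids the compactification argument entirely.
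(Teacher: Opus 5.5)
Your proposal is correct and is essentially the paper's argument. Assuming $\dim T\ge n+1$, you extract a curve of maps $C\to X$ all sending a fixed point of $C$ to a general point $x\in X$, invoke bend-and-break (Koll\'ar II.5.4) to get a rational curve through $x$, and conclude that $X$ is uniruled. Your extra hypothesis that $T\to\Hom(C,X)$ be generically finite is genuinely needed: otherwise take $T=X\times\Aff^1$ and $F(c,x,a)=x$. The paper uses it tacitly when it asserts $\dim_{[F_a]}\Hom(C,X;c_0\mapsto x_0)\ge 1$, and your case analysis via $\mathrm{ev}_p$ is a more careful version of the paper's ``for any $c_0$ and $x_0$''.
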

\begin{proof}
    From Mumford's rigidity theorem, as $F$ is dominant, no fibre of the projection to $T$ can get contracted via $F$. If $\dim T\geq n+1$, then for any
    $c_0\in C$ and $x_0\in X$ we can choose a curve $A=A_{c_0,x_0}\subset T$ (namely
    $A:=\pr_2(F^{-1}(x_0) \cap (\{c_0\}\times T))\subset T$) so that for every $a\in A$, the morphism
    $F_a\colon C\to X$ sends $F_a(c_0)=x_0$. This means that $\dim_{[F_a]}\Hom(C, X; c_0\mapsto x_0)\geq 1$, and so bend
    and break gives that there is a rational curve through $x_0$. Repeating this by varying $x_0$ among all the points
    of a fixed image $F_t(C)$ gives that $X$ must be uniruled, which is a contradiction.
\end{proof}

\section{Some examples}\label{sec:k3hyps}

\subsection{K3 surfaces}

As $h^1(\OO_S)=0$ and the discriminant locus of a linear system is an ample divisor, every isotrivial family of
smooth curves on a K3 surface must have singular flat limits (see alternatively Lemma \ref{lem:unramified}). One of the
original motivations for this paper was to understand these flat limits in the case of Picard rank one.

In general, the expectation here is the following, and could be attributed (as a question) to Schoen.

\begin{Conjecture}
    If $S$ is a very general K3 surface, then there can be no dominant rational map from the product of two
    smooth projective curves to $S$.
\end{Conjecture}

In other words, there is no irreducible curve in $S$ which moves isotrivially, singular or not. \hyperref[thm:A]{Theorem~A} deals with a more precise form of this conjecture in the smooth case.

Before mentioning some cases in which this conjecture is known, we begin by giving some examples where the conjecture can fail for special K3 surfaces.

\begin{Example}\label{ex:isotriv}
    \begin{enumerate}
        \item If $S=V(x_0^4+\ldots+x_3^4)$ is the Fermat quartic, then $x_0=tx_1$ is a 1-dimensional family of smooth
              hyperplane sections of $S$, i.e., plane curves of genus $3$, all of which have the same moduli, so
              the moduli map is constant.
        \item There exist various families of isotrivially elliptic K3 surfaces, so the moduli map is constant with
              1-dimensional fibre. Notably, the Weierstrass model $y^2=x^3+t^{12}-t$ gives an isotrivially elliptic K3 surface which has Picard rank 2; see Keum \cite{keum2016} and Kond\=o \cite{kondo1992}. See also
              \cite{moonen} Family no.\ 150 for a 9-dimensional family.
        \item\label{ex:isotrivkummer} If $S=\widetilde{A/i}$ is a Kummer surface, then any curve $C\subset A$ with $C^2>0$ (i.e., $g(C)\ge 2$) moves in a 2-dimensional
              family in $A$ by translation via the group law on $A$, so this induces a 2-dimensional dominant
              family
              \[\begin{tikzcd}
                      C\times A\arrow[d]\arrow[r, dashed] & S \\ A&
                  \end{tikzcd}\]
              and the induced moduli map is constant with 2-dimensional fibre. If $C^2 = 2n$, the generic translates $C_x$ and $i(C_x)$ meet in $2n$ points $y_1, i(y_1), \dots, y_n, i(y_n)$ away from the fixed points, which project to $n$ nodes on the image of $C_x$ in $S$, so all these curves are singular in $S$.
        \item There exist non-Kummer quotients of abelian surfaces whose resolution is K3. Any curve in the
              abelian surface again moves in a 2-dimensional family. These have been classified by Fujiki and
              others (see Garbagnati \cite{garbagnati}) and have high Picard rank.
        \item Paranjape \cite{paranjape} constructs an isotrivial family of genus 5 curves covering a non-Kummer K3 surface of Picard rank 16 (the desingularization of the double cover of $\PP^2$ branched along 6 general lines).
    \end{enumerate}
\end{Example}

An interesting feature of Example \ref{ex:isotriv}.\eqref{ex:isotrivkummer} is that the base of the covering family is
two-dimensional. Lemma \ref{lem:fibredim} extends \cite[Corollary 3.2]{DuttaHuybrechts} to the whole linear system as opposed to just a general member, and says
that there can be no isotrivial families with dimension of the base larger than two. 
\begin{Remark}
    A reasonable criterion to distinguish which K3 surfaces can be covered by a one- resp.two-dimensional family of curves eludes us and would be interesting.
\end{Remark}

\begin{Theorem}[Moonen \cite{moonen}]\label{thm:moonen}
    Assume that $\Pic(S)\cong\ZZ$ and $\operatorname{End}_{\QQ}(T)=\QQ$ where $T=T(S)$ is the transcendental lattice of
    $S$,  and that there exists $C\times D\dashrightarrow S$ a dominant rational map from the product of two smooth projective curves.  Then $g(C), g(D) \geq 512$. 
\end{Theorem}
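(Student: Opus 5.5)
This is Moonen's theorem, cited from \cite{moonen}. Since it is an external result, we would follow a Hodge-theoretic approach rather than the geometric one of this paper. The idea is to turn the dominant map into a statement about Hodge structures and then use representation theory to bound genera from below.

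\textbf{Step 1: reduce to a Hodge-theoretic statement.} Resolve $C\times D\dashrightarrow S$ to a generically finite morphism $\widetilde{C\times D}\to S$. Pullback on $H^2$ is then injective, since $f_*f^* = \deg f$. The blow-ups only contribute algebraic classes, so the transcendental part lands in $H^1(C)\otimes H^1(D)$. This gives an embedding of rational Hodge structures
\[ T(S)_\QQ \hookrightarrow H^1(C,\QQ)\otimes H^1(D,\QQ), \]
and hence $T$ is a sub-Hodge structure of $H^1(J_C)\otimes H^1(J_D)$.

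\textbf{Step 2: use the Mumford--Tate group of $T$.} Since $\Pic(S)\cong\ZZ$, $T$ has rank $21$. Since $\operatorname{End}_\QQ(T)=\QQ$, Zarhin's theorem gives $\mathrm{MT}(T)=\mathrm{SO}(T)\cong \mathrm{SO}_{21}$. Decompose the Jacobians up to isogeny into simple factors, $J_C\sim\prod A_i$ and $J_D\sim\prod B_j$. Then $T$ embeds into some $H^1(A_i)\otimes H^1(B_j)$. The Mumford--Tate group of $A_i\times B_j$ surjects onto $\mathrm{SO}_{21}$, and $T$ appears as an irreducible constituent of the tensor product of the two standard representations.

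\textbf{Step 3: representation theory (the main obstacle).} For simple abelian varieties, the Hodge groups together with their $H^1$ representations are constrained by the classification of Deligne, Pink and Moonen--Zarhin. Irreducible factors must be minuscule representations. The only way a group mapping onto $B_{10}$ can produce the $21$-dimensional standard representation inside $V\otimes W$ is for the $\mathrm{SO}_{21}$ factor to act on $V$ or $W$ through its spin representation, of dimension $2^{10}=1024$.

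Two sub-cases need handling:
\begin{itemize}
\item The spin factor might sit on only one side. We would show that $T$ cannot then occur: the other side would have to contribute a trivial representation, which is impossible for the weight-one structure $H^1$. So spin appears on both sides, and $\mathrm{spin}\otimes\mathrm{spin}\supset \mathrm{std}$.
\item The case of a $\mathrm{std}\otimes$(character) decomposition is excluded because $H^1$ carries only weights $0$ and $1$. A $21$-dimensional orthogonal representation of odd rank cannot be of abelian-variety type.
\end{itemize}

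\textbf{Step 4: conclude.} We get $2g(C)\ge\dim H^1(A_i)\ge 1024$, so $g(C)\ge 512$, and symmetrically $g(D)\ge 512$. The hard part is the precise Mumford--Tate classification in Step 3, especially excluding the non-spin configurations. For that we would invoke the Deligne/Pink minuscule-weight criterion, which says that the only minuscule representation of $B_n$ other than $\mathrm{std}$ is spin.
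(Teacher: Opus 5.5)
There is no proof in the paper to compare against: the theorem is quoted from Moonen without argument. Your Hodge-theoretic strategy is the right one, and the bound $512$ is exactly half the dimension $2^{10}$ of the spin representation of $B_{10}$. Steps 1, 2 and 4 are fine, with one small correction: Zarhin's theorem identifies the \emph{Hodge} group of $T$ with $SO(T)$, while the Mumford--Tate group is $\mathbb{G}_m\cdot SO(T)$.

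Step 3, however, rests on two incorrect justifications. Let $\mathfrak{h}_1$ be the simple factor of $\operatorname{Lie}\mathrm{MT}(J_C\times J_D)_\CC$ mapping isomorphically onto $\mathfrak{so}(T_\CC)$.

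\textbf{First sub-case.} You claim that the side not carrying the spin representation cannot be $\mathfrak{h}_1$-trivial ``because $H^1$ has weight one''. That is false. Being trivial under one simple factor of the Mumford--Tate group has nothing to do with being a trivial Hodge structure, since the centre and the other factors carry the Hodge cocharacter. For example, if the $B_{10}$-part comes from $J_C$ and $D$ is an elliptic curve, then $\mathfrak{h}_1$ acts trivially on all of $H^1(D)$. The case is excluded for a purely representation-theoretic reason. Each irreducible constituent $V_\alpha$ of $H^1(C)_\CC$ (resp.\ $W_\beta$ of $H^1(D)_\CC$) is an external tensor product over the simple factors, so its restriction to $\mathfrak{h}_1$ is isotypic. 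If $V_\alpha$ is spin on $\mathfrak{h}_1$ and $W_\beta$ is trivial, then $V_\alpha\otimes W_\beta$ restricted to $\mathfrak{h}_1$ is a sum of copies of spin. It therefore contains no copy of $T_\CC|_{\mathfrak{h}_1}=\mathrm{std}$.

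\textbf{Minuscule claim.} The standard representation of $B_n$ is not minuscule: its weights $\pm e_i$ and $0$ form two Weyl orbits. Spin is the only nontrivial minuscule representation of $B_n$. More importantly, ``irreducible factors must be minuscule'' has to be tied to the Hodge cocharacter $\mu$, and once it is, that is the whole argument.
\begin{itemize}
\item Since $T^{2,0}\neq0$, the $\mathfrak{h}_1$-component of $d\mu$ is (conjugate to) the coweight $\varpi_1^\vee$, acting on $T_\CC$ with eigenvalues $1,0,-1$.
\item On $V_\alpha=\bigotimes_i V_{\alpha,i}$, the eigenvalues of $d\mu$ are a central scalar plus sums of eigenvalues on the tensor factors. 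So there are at least as many of them as there are eigenvalues of $\varpi_1^\vee$ on $V_{\alpha,1}$.
\item Since $d\mu$ has only two eigenvalues on $H^1$, $\varpi_1^\vee$ has at most two eigenvalues on $V_{\alpha,1}$.
\item On the irreducible module of highest weight $\lambda=\sum a_j\varpi_j$, the eigenvalues of $\varpi_1^\vee$ are all numbers $\langle\lambda,\varpi_1^\vee\rangle-k$ with $0\le k\le 2\langle\lambda,\varpi_1^\vee\rangle=2(a_1+\cdots+a_9)+a_{10}$. Having at most two of them forces $\lambda\in\{0,\varpi_{10}\}$.
\end{itemize}
This also disposes of your second sub-case, and it makes the appeal to the full Deligne/Pink/Moonen--Zarhin classification unnecessary.

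\textbf{Conclusion.} With each of $V_{\alpha,1}$ and $W_{\beta,1}$ trivial or spin, the inclusion $T_\CC\subset V_\alpha\otimes W_\beta$ forces both to be spin. This is consistent with $\mathrm{spin}\otimes\mathrm{spin}\cong\bigoplus_{k=0}^{10}\wedge^k\mathrm{std}$. Hence $2g(C)\ge\dim V_\alpha\ge 2^{10}$, and likewise for $D$.
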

In other words, there can be no irreducible isotrivial curves on $S$ of geometric genus $<512$.
The condition on the endomorphism algebra is a generic one.

In this paper we are interested in results applying to surfaces of Picard rank one, and not just very general, and here for high genus there are the following two options.

\begin{Theorem}[Totaro, \cite{totaro}]\label{thm:totaro}
    Let $S$ be a K3 surface with $\Pic S\cong \ZZ H$ and assume that $H^2=20$ or $H^2\geq24$. Then for all $n\geq1$, there are no smooth curves in $|nH|$ which deform isotrivially.
\end{Theorem}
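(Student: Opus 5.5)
The plan is a stability-restriction argument. Suppose $C\in|nH|$ is smooth and moves in a non-trivial isotrivial family. By Lemma \ref{lem:H0neq0} the restricted tangent bundle then has a section: $\HH^0(C,T_S|_C)\neq0$. So it is enough to show that $T_S|_C$ has no non-zero sections for every smooth $C\in|nH|$ once $H^2=20$ or $H^2\geq24$.

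The first step is that $T_S\cong\Omega_S$ (since $K_S=0$) is $\mu_H$-stable with $c_1=0$ and $c_2=24$. Stability holds for any K3 surface with respect to any polarisation, because $S$ carries no non-zero vector fields and is Kähler--Einstein. The second step is the restriction sequence
\[
0\to T_S(-nH)\to T_S\to T_S|_C\to 0 .
\]
Since $\HH^0(T_S)=0$, we get $\HH^0(C,T_S|_C)\hookrightarrow \HH^1(S,T_S(-nH))$. By Serre duality and $T_S\cong\Omega_S$, this group is dual to $\HH^1(S,\Omega_S(nH))$. So the task becomes showing that a non-zero section of $T_S|_C$ would produce a destabilising object.

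The natural route is Bogomolov-type effective restriction. A section of $T_S|_C$ gives a map $\OO_C\to T_S|_C$. Saturating it gives a sub-line bundle of degree $\geq0$, so $T_S|_C$ is not stable: it has a quotient of degree $\leq0$. An elementary modification of $T_S$ along $C$ then gives a rank-$2$ bundle $E$ with $c_1(E)=-nH$ and
\[
c_2(E)=24+\deg(\text{quotient})\le 24 .
\]
The discriminant is
\[
\Delta(E)=4c_2-c_1^2\le 96-n^2H^2 .
\]
When $n^2H^2>96$ roughly, Bogomolov's inequality forces $E$ to be unstable. Since $\Pic S=\ZZ H$, the maximal destabilising subsheaf is $\OO(aH)\otimes I_Z$. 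Comparing it with the stable $T_S$, it would have to come from $\OO(aH)\to T_S$ with $a<0$ in a prescribed range. Combining the numerics (Hodge index plus $\Pic=\ZZ H$) then gives a contradiction. The condition $H^2=20$ or $\geq24$ must enter through this sharper analysis, in the spirit of Langer's or Totaro's Reider-type estimates.

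I expect the main obstacle to be the low-degree cases with $n=1$ and $H^2\in\{20\}\cup[24,96]$, where naive Bogomolov is not enough. For these I would first try Totaro's refinement: use the non-vanishing $\HH^1(S,\Omega_S(H))\neq0$ to build a non-split extension. Then run a Lazarsfeld--Mukai / Reider argument, bounding $h^0$ of the relevant line bundles via the intersection form $H^2=2g-2$. The excluded values $H^2\in\{2,\dots,18,22\}$ should be exactly where the numerics of this destabilising-subsheaf argument fail, and I would check this step case by case.
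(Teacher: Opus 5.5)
Your reduction coincides with the paper's: by Lemma \ref{lem:H0neq0} and the restriction sequence, using $\HH^0(S,T_S)=0$, you get $\HH^0(C,T_S|_C)\hookrightarrow \HH^1(S,T_S(-nH))\cong \HH^1(S,\Omega_S(nH))^\vee$. The gap is that the whole content of the statement is the vanishing of this last group, and you never prove it in the stated range.

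The paper gets it in one line from Totaro's theorem \cite[Theorem 3.3]{totaro}. A K3 surface with $\Pic S=\ZZ H$ and $H^2=20$ or $H^2\ge 24$ satisfies Bott vanishing: $\HH^j(S,\Omega^i_S\otimes L)=0$ for all $j>0$ and $L$ ample. Hence $\HH^1(S,\Omega_S(nH))=0$ for every $n\ge 1$.

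Your substitute, the Bogomolov elementary-modification argument, forces $E$ to be unstable only when $\Delta(E)\le 96-n^2H^2<0$. That means $H^2>96$ for $n=1$ and $H^2>24$ for $n=2$.
\begin{itemize}
\item For $n\ge 3$ and $H^2\ge 20$ your sketch can be completed. The saturated destabilising $\OO(aH)\subset E\subset T_S$ must satisfy $-n/2<a<0$, so $c_2(E)\ge (n-1)H^2>24$, contradicting $c_2(E)\le 24$.
\item For $n\le 2$ no such integer $a$ exists, so everything hinges on Bogomolov forcing instability.
\end{itemize}
Thus $n=1$ with $20\le H^2\le 96$ and $n=2$ with $H^2\in\{20,24\}$ are not reached. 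Even the sharper restriction theorems of Bogomolov and Hein in Remark \ref{rem:stable}, which need roughly $C^2\ge 49$, leave $n=1$, $20\le H^2\le 48$ open. These are exactly the cases where the statement goes beyond stability restriction, and ``try Totaro's refinement \dots\ check case by case'' is not an argument for them. Note also that stability of $T_S|_C$ is strictly stronger than the vanishing $\HH^0(C,T_S|_C)=0$ you actually need.

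Your expectation that the excluded degrees are where destabilising-subsheaf numerics break down is also off; they come from the cohomology of $\Omega_S(H)$ on $S$.
\begin{itemize}
\item By Riemann--Roch, $\chi(\Omega_S(H))=H^2-20$, and $\HH^2(S,\Omega_S(H))\cong \HH^0(S,T_S(-H))^\vee=0$. So $\HH^1(S,\Omega_S(H))\neq 0$ automatically when $H^2\le 18$.
\item $H^2=22$ is excluded for reasons specific to genus $12$ (Mukai's description of these K3 surfaces).
\item For $H^2=20$ or $H^2\ge 24$ the vanishing is a genuine theorem of Totaro, drawing on the results of Ciliberto--Dedieu--Sernesi \cite{CDS}, not a consequence of Reider- or Lazarsfeld--Mukai-type numerics.
\end{itemize}
The fix is to cite Totaro's Bott vanishing right after your correct reduction, as the paper does.
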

\begin{proof}
    From \cite[Theorem 3.3]{totaro}, Bott vanishing holds for
    $(S,C)$, so that in particular $\HH^1(S, T_S(C))=0$. Taking cohomology of the long exact
    sequence \[ \begin{tikzcd} 0 \arrow[r] & T_S(-C) \arrow[r] & T_S \arrow[r] & {T_S|_C} \arrow[r] & 0 \end{tikzcd} \] and using Serre duality and $T_S\cong\Omega^1_S$, we
    obtain that $\HH^0(T_S|_C)=0$. In particular, from Lemma \ref{lem:H0neq0}, $C$ cannot
    deform isotrivially in its linear system.
\end{proof}

More precise results are known for lower genera from \cite{knutsen-globalsections} and \cite{dedieu-sernesi}.

\begin{Remark}\label{rem:stable}
    We note that as $\Omega^1_S$ is a $\mu$-stable vector bundle with respect to any ample line bundle, various restriction theorems of Bogomolov  and Hein \cite{ghein} (see \cite[Theorem 2.1, Remark 2.7.(i)]{DuttaHuybrechts}) imply that $\Omega^1_S|_C$ often remains stable, for example in the case where $C$ is any smooth curve in its linear system with $C^2\geq 49$ (which is a slightly worse bound than that in Bott vanishing above). As the bundle in question has degree 0, it follows that $\HH^0(C,T_S|_C)=0$ and Lemma \ref{lem:H0neq0} implies a similar result for this range.
\end{Remark}

Alternatively one can use the following, more general result which does not require any assumptions on the Picard rank (or genericity).

\begin{Theorem}[{Ciliberto--Dedieu--Sernesi, \cite[Proposition 8.6]{CDS}}]\label{thm:cds}
    Let $S$ be a K3 surface and $C\subset S$ a curve with $C^2\geq20$ and $\mathrm{Cliff}(C)>2$. Then there are only finitely many $C'\in|\OO_S(C)|$ so that $C'\cong C$.
\end{Theorem}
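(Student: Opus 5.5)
The plan is to follow the strategy the paper uses for Theorem~\ref{thm:totaro}: produce a vanishing $\HH^0(C,T_S|_C)=0$ and then invoke Lemma~\ref{lem:H0neq0}. Here, however, a global vanishing is not available from Bott vanishing, so I would run the argument relative to the linear system. Suppose, for contradiction, that infinitely many $C'\in|\OO_S(C)|$ are isomorphic to $C$. The isomorphism class is a closed condition on the smooth locus, so the locus of such $C'$ contains a curve. Normalising it and trivialising the family after a finite base change gives a dominant-onto-its-image morphism $F\colon C\times D\to S$, with every $F_d$ an embedding of $C$ in the class of $C$. Lemma~\ref{lem:H0neq0} then gives a nonzero section of $F_d^*T_S\cong T_S|_{C_d}$.

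Since $T_S\cong\Omega_S$ and $\Omega_S|_{C}$ sits in the conormal sequence
\[
0\to \OO_C(-C)\to \Omega_S|_C\to \omega_C\to 0,
\]
with $\OO_C(C)\cong\omega_C$ by adjunction on a K3, a nonzero section of $T_S|_C\cong(\Omega_S|_C)^\vee$ is a nonzero map $\Omega_S|_C\to\OO_C$. This map cannot vanish on the subsheaf $\omega_C^{-1}$ only through the quotient, because $\Hom(\omega_C,\OO_C)=0$. So it restricts to a nonzero map $\omega_C^{-1}\to\OO_C$, that is, a nonzero section of $\omega_C$. The extension class of the sequence lies in $\HH^1(C,\omega_C^{-2})$ and is the image of the class of the tangent bundle; equivalently, the section $s\in \HH^0(\omega_C)$ must be annihilated by the extension class under the pairing $\HH^0(\omega_C)\otimes \HH^1(\omega_C^{-2})\to \HH^1(\omega_C^{-1})$. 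This is the standard link to the Gaussian/Wahl map: the section exists precisely when the corank of the Wahl map $\Phi_C\colon \wedge^2\HH^0(\omega_C)\to \HH^0(\omega_C^{3})$ detects it. The condition $\mathrm{Cliff}(C)>2$ together with $C^2\ge 20$ (genus $\ge 11$) is exactly the range where surjectivity of the Wahl map for curves on K3 surfaces is known (via results in the style of Ciliberto--Lopez--Miranda and Arbarello--Bruno--Sernesi). More precisely, $C$ is not a hyperplane section of a non-cone surface other than $S$, so the corank is one and is accounted for by $S$ itself.

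I would therefore make the following refinement. The nonzero section from Lemma~\ref{lem:H0neq0} gives an infinitesimal deformation of the embedding $C\hookrightarrow S$ with the abstract curve fixed. Projecting to the normal direction gives an element of $\HH^0(N_{C/S})=\HH^0(\omega_C)$ which is tangent to the fibre of the moduli map $|C|\dashrightarrow\mathcal{M}_g$. So the statement reduces to injectivity of the differential of the moduli map at $[C]$, that is, of the coboundary $\HH^0(N_{C/S})\to \HH^1(T_C)$. By the conormal sequence, its kernel is the image of $\HH^0(T_S|_C)$. By the reasoning above, this kernel is dual to the cokernel of the Wahl-type map restricted to $S$, and it vanishes once the corank-one statement holds, using $\HH^1(T_S)\to\HH^1(T_S|_C)$ and $\HH^0(T_S)=0$. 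Finiteness of the fibres of the moduli map through $[C]$ then follows from the differential being injective (unramified implies quasi-finite near $C$). Applying this at every $C'$ isomorphic to $C$, which all satisfy the same hypotheses, shows that the locus is zero-dimensional and hence finite.

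I expect the main obstacle to be the corank statement: showing that for $\mathrm{Cliff}(C)>2$ and $g\ge 11$ the only surface extension of the canonical curve $C$ is $S$, without the Picard rank one assumption. There I would first try to cite the extendability theorem of Arbarello--Bruno--Sernesi, namely that corank of the Wahl map equals the extension dimension for Clifford index at least $3$ and $g\ge11$. I would then check that the K3 extension gives corank exactly one. This is presumably where the constants $20$ and $2$ arise.
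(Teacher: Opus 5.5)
Your argument breaks at the step ``$C$ is not a hyperplane section of a non-cone surface other than $S$, so the corank is one''. What your last step actually needs is $\HH^1(T_S(-C))=0$: since $\HH^0(T_S)=0$, we have $\HH^0(T_S|_C)\subset \HH^1(T_S(-C))$. By Serre duality this is the Bott-type vanishing $\HH^1(\Omega_S(C))=0$ used in Theorem~\ref{thm:totaro}. Nothing in $g\ge 11$, $\mathrm{Cliff}(C)\ge 3$ forces it, and it fails whenever $S\subset|C|^\vee$ is itself extendable.

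For instance, let $S=\{F=0\}\subset\PP^3$ be a smooth quartic with $\Pic S=\ZZ H$, and let $C=S\cap\{G=0\}\in|4H|$ be smooth. Then $C^2=64$ and $\mathrm{Cliff}(C)>2$, so the hypotheses hold. But $S$ is a hyperplane section of $v_4(\PP^3)\subset\PP^{34}$, and $C$ lies on the whole pencil of quartics $F+\lambda G=0$. So $C$ is a codimension-two linear section of a threefold that is not a cone, and $\operatorname{cork}\Phi_C\ge 2$. Moreover $\HH^1(T_S(-C))\neq 0$, because $0\to T_S\to T_{\PP^3}|_S\to \OO_S(4)\to 0$ does not split: $h^0(T_{\PP^3}|_S)=15<34=h^0(\OO_S(4))$.

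The same happens for K3 surfaces of degree $22$ that are hyperplane sections of prime Fano threefolds $V_{22}$. This is exactly the case $(H^2,n)=(22,1)$ that Theorem~\ref{thm:totaro} misses and for which the paper needs \cite{CDS}. So at best your argument reproves the Bott-vanishing case, and it says nothing where the cited theorem is actually needed.

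The strategy also cannot be rescued by a sharper corank computation. Its intermediate target, injectivity of the differential of $|C|\to\mathcal{M}_g$ at every $C'\cong C$, is false under the hypotheses. In the quartic example, take a general vector field $\xi$ on $\PP^3$ and set $C=S\cap\{\xi(F)=0\}$. The linear system $\{\xi(F)|_S\}$ has base locus $\operatorname{Sing}S=\emptyset$, so by Bertini $C$ is a smooth member of $|4H|$. Since $dF(\xi)=\xi(F)$ vanishes on $C$, the restriction $\xi|_C$ is a nonzero section of $T_S|_C$. Thus the moduli map is ramified at $[C]$, even though by the theorem its fibre there is finite.

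A proof therefore has to use that an entire positive-dimensional family consists of mutually isomorphic curves. That is a global statement about the surface extensions of $C$, not a pointwise vanishing of $\HH^0(T_S|_{C'})$. The paper gives no argument of its own and simply quotes \cite[Proposition~8.6]{CDS}, which sits inside the Wahl-map and extension framework developed there.

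Two smaller slips:
\begin{enumerate}
\item By Wahl's theorem, $\Phi_C$ is never surjective for a curve on a K3 surface, so there is no ``range where surjectivity is known''.
\item The extendability theorem for $g\ge 11$, $\mathrm{Cliff}\ge 3$ is that of \cite{CDS}; Arbarello--Bruno--Sernesi treat Brill--Noether--Petri general curves.
\end{enumerate}
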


The condition on the Clifford index is satisfied by all smooth curves $C$ if the Picard rank is one and $C^2\geq12$ from \cite[Theorem 1.3]{knutsenk-th}.  

All in all, the above results imply that for the case of a K3 surface $S$ of Picard group generated by $H$, there can be no smooth isotrivial families of curves in $|nH|$ unless
\begin{itemize}
    \item $H^2=2$ and $1\leq n\leq3$,
    \item $H^2=4$ and $1\leq n\leq2$,
    \item $6\leq H^2\leq 18$ and $n=1$.
\end{itemize}
\hyperref[thm:A]{Theorem~A} deals with all the cases $n=1$ of the above, and in particular to rule out smooth isotrivial families of curves in Picard rank one K3 surfaces, all that remains are the three cases $(H^2,n)=(2,2), (2,3), (4,2)$. These will be dealt with in a sequel to this paper with Y. Dutta.

\subsection{Very general surfaces in $\PP^3$}

We begin with the following theorem of Schoen, which rules out isotrivial families of curves, singular or not, in very general hypersurfaces.

\begin{Theorem}[Schoen, \cite{Schoen}]
    The very general surface $S\subset\PP^3$ of degree $d\geq5$ is not dominated by a product of curves.
\end{Theorem}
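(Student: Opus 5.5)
The plan is to argue by contradiction, in the spirit of Lemma~\ref{lem:fibredim} and Moonen's argument: use Hodge theory to show that a product of curves dominating a very general hypersurface would force a special Hodge structure, incompatible with the monodromy of the universal family of degree $d$ surfaces. Suppose a dominant rational map $f\colon C\times D\dashrightarrow S$ exists for very general $S$. Since $S$ ranges over a family that is countable-union-constructible, there is an \'etale-local (after base change) family of such maps $f_t\colon C_t\times D_t\dashrightarrow S_t$ over an open set $U$ of the parameter space of degree $d$ surfaces, with $C_t, D_t$ varying in families. Resolving indeterminacy, $H^2_{\mathrm{tr}}(S_t,\QQ)$ becomes a sub-Hodge structure of $H^2(\widetilde{C_t\times D_t})$, so it is a sub-Hodge structure of $H^1(C_t)\otimes H^1(D_t)$, modulo algebraic classes. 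This is the first step: the transcendental part, which equals the primitive cohomology for very general $S$ by Noether--Lefschetz, embeds in a tensor product of weight-one structures.

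Second, I use monodromy. The monodromy group of the primitive cohomology of degree $d$ surfaces is Zariski dense in the full orthogonal group $O(H^2_{\mathrm{prim}})$ (Deligne), so the Mumford--Tate group of a very general $S$ is the full $SO$ (or $GSO$) of a quadratic space of dimension $b=h^2_{\mathrm{prim}}$, with $h^{2,0}=p_g=\binom{d-1}{3}\ge 4$ for $d\ge5$. On the other hand, a sub-Hodge structure of $H^1(C)\otimes H^1(D)$ has Mumford--Tate group a quotient of a subgroup of $MT(J(C)\times J(D))$, acting through a representation contained in the tensor product of two weight-one (symplectic-type) representations. The key point is that the adjoint-level representation theory of $SO_b$ with $h^{2,0}\ge 2$ admits no realization inside such a tensor product of weight-one Hodge structures. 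This is Deligne's classification of Hodge structures of abelian type: the standard representation of $SO(2,b-2)$ is of abelian type (Kuga--Satake), but $SO(2h^{2,0}, \dots)$ with $h^{2,0}\ge2$ is not, since real simple factors of Mumford--Tate groups of abelian type must be of Hermitian type, and $SO(p,q)$ is Hermitian only for $p=2$.

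Third, I conclude. Hodge structures of abelian type are closed under sub-objects, and $H^1(C)\otimes H^1(D)$ is of abelian type, so $H^2_{\mathrm{prim}}(S)$ would be of abelian type. But the Hodge numbers force the real group $SO(2p_g, h^{1,1}_{\mathrm{prim}})$, which is not Hermitian for $p_g\ge2$, giving a contradiction for $d\ge5$. The main obstacle is the second step: rigorously identifying the Mumford--Tate group of the very general member with the full orthogonal group. I would first try to deduce this from Zariski density of monodromy, since the connected monodromy group lies in the derived Mumford--Tate group for very general $t$, and from irreducibility of the monodromy representation. Then I would invoke Deligne's criterion on abelian-type Hodge structures to finish.
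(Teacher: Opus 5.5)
The paper gives no proof of this statement. It is quoted from \cite{Schoen} as background, so your proposal has to be judged on its own. It holds up: this is the standard Hodge-theoretic argument, and there is no genuine gap.

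The chain of reasoning checks out as follows.

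\begin{enumerate}
\item After resolving indeterminacy by point blow-ups, $f^*$ is injective on $H^2$. The projection of $f^*T(S)_\QQ$ to the Tate summands vanishes, because its kernel is a sub-Hodge structure containing $H^{2,0}$. So $T(S)_\QQ$ is a sub-Hodge structure of $H^1(C)\otimes H^1(D)\subset H^2(J(C)\times J(D))$.
\item Hence $\mathrm{MT}(T(S))$ is a quotient of $\mathrm{MT}(J(C)\times J(D))$ itself, not merely of a subgroup. The Hodge structure on its Lie algebra therefore has types in $\{(-1,1),(0,0),(1,-1)\}$.
\item For very general $S$, Noether--Lefschetz gives $T(S)_\QQ=H^2_{\mathrm{prim}}(S,\QQ)$. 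Deligne's monodromy theorem, together with normality of the connected monodromy group in the derived Mumford--Tate group at Hodge-generic points, gives $SO(T)\subseteq\mathrm{MT}(T)$.
\end{enumerate}

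The cleanest way to finish is directly. $\mathfrak{so}(T)\cong\wedge^2T(2)$ contains $\wedge^2T^{2,0}$, which has type $(2,-2)$. This is nonzero once $p_g=\binom{d-1}{3}\ge2$, i.e.\ $d\ge5$, which contradicts step 2.

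If you keep the Hermitian-type formulation, fix two inaccuracies:
\begin{itemize}
\item The constraint concerns the noncompact simple factors of the adjoint group, not all real simple factors.
\item $SO(p,q)$ is of Hermitian type when $p=2$ \emph{or} $q=2$. Here the real group is $SO(2p_g,h^{1,1}-1)$, e.g.\ $SO(8,44)$ for $d=5$, so neither index is $2$.
\end{itemize}

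The family of maps $f_t$ over an open set in your first paragraph is never used and should be dropped. The argument is pointwise at a Hodge-generic $S$, and that is exactly where ``very general'' enters.

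Compared with the paper's own methods, each approach buys something different.
\begin{itemize}
\item Your Hodge-theoretic argument ignores the singularities of the curves, so it excludes every product domination. But it needs very general moduli and $p_g\ge2$. It says nothing for quartics, where Kuga--Satake makes $H^2$ of abelian type; this is why the conjecture for very general K3 surfaces stated in the paper remains open.
\item The paper's Theorem~A uses semistable reduction and the theta-divisor computation. It applies to every surface with $\NS(S)=\ZZ H$ satisfying its hypotheses, K3 surfaces included, but only to isotrivial families of smooth curves in $|H|$.
\end{itemize}
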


Using a simple dimension count argument, one can also rule out smooth isotrivial families of curves in the primitive (hyperplane) class if $d\geq4$, but we do not include this argument here as our focus is on Picard rank one surfaces.

Similar arguments to Remark \ref{rem:stable} apply to hypersurfaces in $\PP^3$ of degree $d\geq5$, and give that if $S$ has Picard rank one and $C$ is a smooth curve in its linear system with $C^2\gg0$, then $T_S|_C$ is stable and in particular has no sections, so the result follows similarly from Lemma \ref{lem:H0neq0}. See \cite{ghein} for precise numerics.

We remark also that it is possible to show that there exist hypersurfaces $S$ of Picard rank one for which the moduli map

\[
    \begin{tikzcd} {\mu\colon |\OO(1)|_{\sm}} \arrow[r] & \mathcal{M}_g \end{tikzcd}
\]

is quasi-finite but of degree at least two. We expect this should not happen for the very general hypersurface, namely that the map is actually one-to-one in this case. In upcoming work of the second author with D. Faro the follow-up question of if and where $\mu$ is unramified is studied, i.e., whether the group $\HH^0(S, T_S|_C)=0$ for smooth curves.

\section{Auxiliary Results}\label{sec:auxiliary}
\subsection{A Result on Theta Divisors}\label{sec:thetadiv}

Let $C$ be a smooth projective curve of genus $g\geq2$. For a divisor $D\in\Pic^{g-1}(C)$, a fixed point $p\in C$ and an $m>0$, we will be interested in this section in the cardinality of the set
\[
    \Sigma_m = \big\{q\in C: h^0(D+m(p-q)) > 0\big\}
\]
as $m$ goes to infinity.

Let $\alpha_p\colon C\to J(C)$ be the Abel--Jacobi map $\alpha_p(q) = p-q$
\[\Theta = \{G\in J(C): h^0(G + (g-1)p) > 0\}\]
is the theta divisor
and $[m]\colon J(C)\to J(C)$ is the multiplication map $[m](G) = mG$, then $q\in \Sigma_m$ if and only if
\[[m] \circ \alpha_p(q)\in \Theta + ((g-1) p - D) = \Theta + \Delta\]
where $\Delta = (g-1)p - D\in J(C)$
and $\Theta + \Delta$ is the translation of $\Theta$ by $\Delta$.
In other words,
\[
    \Sigma_m = ([m] \circ \alpha_p)^{-1} (\Theta + \Delta).
\]
The following proposition will prove that
\begin{itemize}
    \item for all $m\geq g$ and a $D\in\Pic^{g-1}(C)$, the curve $[m](\alpha_p(C))$ is not contained in $\Theta + \Delta$,
    \item if $D\in\Pic^{g-1}(C)$ is very general, then in fact as $m\to\infty$,
          \[[m](\alpha_p(C)).(\Theta + \Delta)\to\infty.\]
\end{itemize}

\begin{Proposition}\label{K3ISOTRIVIALPROPJACOBIAN}
    Let $C$ be a smooth projective curve of genus $g\ge 2$, let $p$ be a point on $C$ and let $D\in \Pic(C)$ be a divisor on $C$ of degree $d$.
    \begin{itemize}
        \item For every $m\ge g$ and a general point $q\in C$,
              \begin{equation}\label{K3ISOTRIVIALTHMGLOBALE034}
                  h^0(D+m(p-q)) = \max(0, d - g + 1).
              \end{equation}
        \item If $d = g-1$ and $D\in \Pic^{g-1}(C)$ is very general,
              then
              \begin{equation}\label{K3ISOTRIVIALTHMGLOBALE035}
                  \lim_{m\to\infty} \Big |\big\{q\in C: h^0(D+m(p-q)) > 0\big\}\Big| = \infty
              \end{equation}
    \end{itemize}
\end{Proposition}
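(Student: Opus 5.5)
We treat both bullets through the linear series $V_m = \HH^0(C, D+mp)$. Indeed $h^0(D+m(p-q))$ is exactly the dimension of the subspace of $V_m$ of sections vanishing to order $\ge m$ at $q$, so both statements become statements about vanishing sequences of $V_m$.

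\textbf{First bullet, case $d\ge g-1$.} Since $m\ge g$, we have $\deg(D+mp)=d+m\ge 2g-1$. Hence $D+mp$ is non-special and $N:=\dim V_m = d+m-g+1$. Consider the vanishing sequence $a_0<a_1<\dots<a_{N-1}$ of $V_m$ at $q$. At a point $q$ where $V_m$ is unramified we have $a_i=i$. The sections vanishing to order $\ge m$ at $q$ then form a subspace of dimension $\max(0,N-m)=\max(0,d-g+1)$. In characteristic $0$ the Wronskian of $V_m$ is a nonzero section of a line bundle, so by the Plücker formula the ramification points of $V_m$ are finite in number. Therefore a general $q$ is unramified, which gives \eqref{K3ISOTRIVIALTHMGLOBALE034}.

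\textbf{First bullet, case $d<g-1$.} Here the target value is $0$. Choose any effective divisor $B$ of degree $g-1-d$. Then
\[
h^0(D+m(p-q))\le h^0(D+B+m(p-q)).
\]
The right-hand side vanishes for general $q$ by the case $d=g-1$ just proved.

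In the Jacobian language of the paper, the case $d=g-1$ says that $[m]\circ\alpha_p(C)\not\subset\Theta+\Delta$ for every $m\ge g$ and every $\Delta$. Consequently $\Sigma_m=([m]\circ\alpha_p)^{-1}(\Theta+\Delta)$ is a finite set. Its points, counted with multiplicity, number
\[
\deg\big(([m]\circ\alpha_p)^*\Theta\big)=\deg\big(\alpha_p^*[m]^*\Theta\big)=m^2\,(\alpha_p(C)\cdot\Theta)=m^2 g.
\]
Here we use that $[m]^*\Theta$ is algebraically equivalent to $m^2\Theta$, and that translation does not change degrees.

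\textbf{Second bullet.} It remains to show that, for very general $\Delta$, these $m^2g$ intersection points are distinct for every $m$. Fix $m\ge g$ and set $\Gamma_m=[m]\circ\alpha_p(C)$. The map $C\to\Gamma_m$ is finite, and it is ramified or maps to singular points of $\Gamma_m$ only over a finite set $F_m\subset J(C)$.
\begin{itemize}
\item Because $J(C)$ acts transitively on itself by translation, Kleiman's transversality theorem applies to the translates $\Theta+\Delta$.
\item For $\Delta$ outside a proper closed subset, $\Theta+\Delta$ avoids $F_m$.
\item For such $\Delta$, $\Theta+\Delta$ meets the smooth locus of $\Gamma_m$ transversally.
\end{itemize}
For such $\Delta$ the pullback divisor $([m]\circ\alpha_p)^*(\Theta+\Delta)$ is reduced, and so $|\Sigma_m|=m^2g$. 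Intersecting these dense open conditions over all $m\ge g$ gives a very general $\Delta$, hence a very general $D=(g-1)p-\Delta$. For this $D$ we get $|\Sigma_m|=m^2g\to\infty$, which is \eqref{K3ISOTRIVIALTHMGLOBALE035}.

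The main obstacle is the non-containment step, i.e.\ the first bullet. The degree computation alone cannot rule out $\Gamma_m\subset\Theta+\Delta$. The point of the argument is to convert this containment into the statement that every $q$ is a ramification point of $V_m$, and to use $m\ge g$ precisely so that $D+mp$ is non-special and $\dim V_m$ is known. The transversality step only needs care in two places: a possibly non-birational or singular image $\Gamma_m$, which is handled by the finite exceptional set $F_m$, and the countable intersection over $m$.
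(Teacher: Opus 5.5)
Your proposal is correct and follows essentially the same route as the paper. For the first bullet you use that in characteristic $0$ a general point is not a ramification point of $|D+mp|$, citing the Wronskian/Pl\"ucker formula where the paper proves this directly by differentiating a family of sections $s_t$ vanishing to order $n\ge m$ at $t$; you reduce $d<g-1$ to $d=g-1$ by adding an effective divisor, as the paper does. For the second bullet you use the same degree count $\deg \alpha_p^*[m]^*\Theta=m^2g$ plus Kleiman transversality for very general translates $\Theta+\Delta$, intersected over all $m$, and your version is slightly more explicit (the exact count $|\Sigma_m|=m^2g$, the finite exceptional set $F_m$), though you should add that a general translate of $\operatorname{Sing}\Theta$, which has codimension at least $3$, misses $\Gamma_m$, so the transverse points are smooth points of $\Theta+\Delta$.
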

\begin{proof}[Proof of Proposition \ref{K3ISOTRIVIALPROPJACOBIAN}]
    For \eqref{K3ISOTRIVIALTHMGLOBALE034}, we prove the result first for $d = g-1$. In this case, for $m\geq g$
    \[
        h^0(D + mp) = m.
    \]

    Suppose \eqref{K3ISOTRIVIALTHMGLOBALE034} fails for $d=g-1$. Equivalently, the line bundle $L:=D+mp$ has a nonzero section vanishing to order $\ge m$ at a general point of $C$. Choose a small disk $\Delta \subset C$ with a local coordinate $t$, over which the dimension $h^0(L-mt)$ is constant. The subspaces $H^0(L-mt)\subset H^0(L)$ then form a holomorphic subbundle, so we may pick a holomorphic family $s_t \in H^0(L)$ of nonzero sections such that $\operatorname{mult}_t s_t=n$ for some constant $n\ge m$. The parametric derivatives
    \[
        s_t^{(k)} := \frac{d^k}{dt^k}\,s_t \in H^0(L), \qquad k=0,1,\dots,n,
    \]
    again lie in the fixed vector space $H^0(L)$. Expanding locally around the point $t$, we can write $s_t(x)=\sum_{j\ge n}c_j(t)(x-t)^j$ where $c_n(t)\ne 0$. The leading term contributes 
    \[ 
        (-1)^k n(n-1)\cdots(n-k+1)\,c_n(t)\,(x-t)^{n-k}
    \]
    to $s_t^{(k)}$ and so $\operatorname{mult}_t s_t^{(k)}=n-k$. Hence $s_t^{(0)},\dots,s_t^{(n)}$ have pairwise distinct vanishing orders at the point $t$ and are linearly independent, giving $h^0(L)\ge n+1$. But for $m\ge g$ one has $\deg L=g-1+m>2g-2$, so $h^0(L)=m$ by Riemann--Roch, and $m<n+1$ is a contradiction. This proves \eqref{K3ISOTRIVIALTHMGLOBALE034} for $d=g-1$.

    If $d>g-1$ then we may write $D=D_1+D_2$ where $\deg(D_1)=g-1$ and $D_2$ is effective. From the $d=g-1$ case and Riemann--Roch, we conclude that $h^1(D+m(p-q))=0$ for a general point $q\in C$. The case $d<g-1$ is similar, writing $D_1=D+D_2$ where $\deg(D_1)=g-1$ and $D_2$ is effective.

    For $\Theta, \Delta$ as above the proof, note that by \eqref{K3ISOTRIVIALTHMGLOBALE034}, $[m]\circ \alpha_p(C)$ and $\Theta + \Delta$ meet properly for all $m\ge g$.

    Since $[m]^*\colon H^2(J(C), \ZZ)\to H^2(J(C), \ZZ)$ is given by $[m]^*(\alpha) = m^2 \alpha$, we have
    $c_1([m]^* \Theta) = m^2 c_1(\Theta)$ and hence
    \[
        \deg \alpha_p^* [m]^* \Theta = \alpha_{p,*} C . [m]^* \Theta = m^2 \alpha_{p,*} C . \Theta
    \]
    And since $\Theta$ is ample, we have
    \begin{equation}\label{K3ISOTRIVIALTHMGLOBALE042}
        \lim_{m\to\infty} \deg \alpha_p^* [m]^* \Theta = \infty.
    \end{equation}
    The same holds with $\Theta$ replaced by $\Theta + \Delta$ in \eqref{K3ISOTRIVIALTHMGLOBALE042}.

    Since $D\in \Pic^{g-1}(C)$ is very general, $\Delta\in J(C)$ is very general. So
    $\alpha_p^* [m]^* (\Theta + \Delta)$ is reduced on $C$ for all $m$ by Kleiman's generalised Bertini's Theorem (cf. \cite[Theorem 10.8, p. 273]{Hartshorne}). Therefore,
    \[
        \lim_{m\to\infty} |\Sigma_m| = \lim_{m\to\infty} |\alpha_p^{-1}\circ [m]^{-1} (\Theta+\Delta)| = \infty.\qedhere
    \]
\end{proof}

\subsection{On families of stable maps}

The setup will be as follows

\begin{Hypothesis}\label{K3ISOTRIVIALTHMHYP}
    Let
    \[
        \begin{tikzcd}
            f\colon Y\arrow[rr]\arrow[dr] & & X\arrow[dl] \\
            & \Delta &
        \end{tikzcd}
    \]
    be a non-constant family of stable maps over the unit disk $\Delta$, where $Y$ is smooth over $\Delta^* = \Delta\backslash \{0\}$ and $X$ is a smooth projective threefold, which is a flat family of surfaces over $\Delta$.
\end{Hypothesis}

\begin{Lemma}\label{K3ISOTRIVIALLEMNODALSING}
    Let $Y/\Delta$ be a generically smooth family of stable curves over the unit disk $\Delta$.
    \begin{enumerate}
        \item $Y$ has canonical singularities and $K_Y$ is Cartier.
        \item For any node $p \in Y_0$ formed by the intersection of two local branches $C_1, C_2$ of $Y_0$, there exists $m \ge 1$ such that locally at $p$, $Y \cong \{xy = t^m\} \subset \Delta_{xyt}^3$, i.e., $Y$ has an $A_{m-1}$ singularity at $p$, and as Weil divisors on $Y$, we have the local intersection numbers at $p$
              \begin{equation}\label{K3ISOTRIVIALE400_LOCAL}
                  C_1 \cdot C_2 = -C_1^2 = -C_2^2 = \frac{1}{m}.
              \end{equation}
        \item The family admits a minimal resolution of singularities $\psi \colon \widehat{Y} \to Y$ which is an isomorphism outside the singular locus of $Y_0$. For each $A_{m-1}$ singularity $p$, the exceptional locus $\psi^{-1}(p)$ is a chain of $m-1$ smooth rational $(-2)$-curves, and $K_{\widehat{Y}} = \psi^* K_Y$.
    \end{enumerate}
\end{Lemma}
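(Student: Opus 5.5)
The plan is to work étale/analytically locally at each node of the central fibre. Since $Y\to\Delta$ is a flat family of stable (hence nodal) curves with smooth generic fibre, the standard deformation theory of nodes gives the local form directly. The versal deformation of a node $xy=0$ is $xy=s$ over the base $\Spec \CC[[s]]$, so the family near a node $p\in Y_0$ is pulled back along a map $t\mapsto s(t)$. Because the generic fibre is smooth, $s(t)\not\equiv 0$. Write $s=t^m\cdot u(t)$ with $u$ a unit; after absorbing an $m$-th root of $u$ into $x$, we get $Y\cong\{xy=t^m\}$ locally, with $m\ge 1$. This proves the local form in (2).

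Item (1) then follows immediately.
\begin{itemize}
\item Away from the nodes of $Y_0$, the morphism $Y\to\Delta$ is smooth, so $Y$ is smooth there.
\item At a node, $\{xy=t^m\}$ is either smooth (if $m=1$) or an $A_{m-1}$ du Val singularity, which is canonical.
\item $Y$ is a hypersurface in a smooth threefold, hence Gorenstein, so $K_Y$ is Cartier. Alternatively, $\omega_{Y/\Delta}$ is invertible for a nodal family.
\end{itemize}

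For (3), resolve each $A_{m-1}$ point by its minimal resolution. This is the standard toric resolution of $xy=t^m$, which produces a chain $E_1,\dots,E_{m-1}$ of smooth rational $(-2)$-curves. The strict transforms $\tilde C_1,\tilde C_2$ meet the two ends $E_1$ and $E_{m-1}$ transversally. Du Val singularities are crepant, so $K_{\widehat Y}=\psi^*K_Y$. The morphism $\psi$ is an isomorphism away from $\operatorname{Sing} Y\subset\operatorname{Sing}Y_0$.

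The intersection numbers in (2) are computed via Mumford's $\QQ$-valued intersection theory on the normal surface $Y$: set $\psi^*C_i=\tilde C_i+\sum a_{ij}E_j$, with the coefficients determined by $\psi^*C_i\cdot E_j=0$ for all $j$.
\begin{itemize}
\item For $C_1$: the $E$-part is $\sum_j \frac{m-j}{m}E_j$. Check: $-2a_1+a_2+1=0$, and in general $a_{j-1}-2a_j+a_{j+1}=0$ with $a_0=1$, $a_m=0$, so $a_j=(m-j)/m$.
\item For $C_2$: symmetrically, $a_j=j/m$.
\end{itemize}
Then $C_1\cdot C_2=\tilde C_1\cdot\psi^*C_2=a_{2,1}=1/m$, since $\tilde C_1$ meets only $E_1$. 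For the self-intersection, the local contribution is $C_1^2=\tilde C_1\cdot\psi^*C_1$ restricted near $p$, which equals $a_{1,1}$ plus the local $\tilde C_1^2$ contribution. A cleaner way is to use $C_1+C_2=\operatorname{div}(t)$ locally, which is Cartier. Then $C_1\cdot(C_1+C_2)=0$ locally, giving $C_1^2=-C_1\cdot C_2=-1/m$, and likewise $C_2^2=-1/m$.

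The only delicate step is making this "local intersection number" meaningful. I would define it as the contribution at $p$ in $\psi^*C_1\cdot\tilde C_2$ and similar expressions. The Cartier relation $C_1+C_2\sim 0$ near $p$ then gives the self-intersection formula without any global input. If $m=1$, the chain is empty and the formula reduces to transverse intersection on a smooth surface, namely $C_1\cdot C_2=1$ and $C_1^2=-1$ locally.
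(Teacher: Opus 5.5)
Your proposal is correct and follows essentially the same route as the paper. The shared steps are the local form $xy=t^m$ from the deformation theory of a node (the paper simply cites Harris--Morrison), canonicity and $K_Y$ Cartier from the $A_{m-1}$ hypersurface singularity, the crepant chain of $(-2)$-curves, and the self-intersections from $C_1+C_2=\operatorname{div}(t)$ being locally Cartier.

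The only variation is in computing $C_1\cdot C_2=1/m$. You use the Mumford pullback coefficients on the minimal resolution, whereas the paper notes that $mC_1=\operatorname{div}(x)$ is Cartier and restricts it to $C_2$, on which $x$ vanishes to order one; both computations are fine.
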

\begin{proof}
    Since $Y \to \Delta$ is a flat family of stable curves and the general fibre $Y_t$ is smooth, the only singularities of the total space occur at the nodes of $Y_0$. That the singularity is $A_{m-1}$ and that locally $Y$ is given by $xy=t^m$ around $C_1\cap C_2$ is covered, e.g., in \cite[\S 3.B, 3.C]{harrismorrison}. Because $A_{m-1}$ singularities are canonical and Gorenstein, $Y$ is normal and $K_Y$ is Cartier, and the minimal resolution $\psi$ is crepant, i.e., $K_{\widehat{Y}} = \psi^* K_Y$. The local equation implies that $m C_1$ and $m C_2$ are the vanishing of $\{x=0\}$ resp.\ $\{y=0\}$ on $Y$, and hence are Cartier divisors. Their transverse intersection at the origin yields $m C_1 \cdot C_2 = 1$. As $C_1+C_2$ is locally the fibre $Y_0$ over $\Delta$, we have $C_1 \cdot (C_1+C_2) = C_2 \cdot (C_1+C_2) = 0$, and the equalities of intersection numbers in \eqref{K3ISOTRIVIALE400_LOCAL} follow.
\end{proof}

\begin{Lemma}\label{K3ISOTRIVIALLEMIMAGE}
    Assume Hypothesis \ref{K3ISOTRIVIALTHMHYP}. Then the image $W=f(Y)$ is a Gorenstein surface, flat over $\Delta$.

   Suppose that $f$ is moreover a closed embedding over $\Delta^*$. Then $W$ is normal if and only if $W_0$ is reduced.
\end{Lemma}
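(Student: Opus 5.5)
We argue as follows. First we show that $W$ is a flat family of surfaces over $\Delta$ and is a Cartier divisor in $X$. Since $f$ is non-constant on fibres and $Y$ is irreducible of dimension $2$ (being the closure of the smooth part over $\Delta^*$), the image $W=f(Y)$ is an irreducible surface in $X$. Flatness over $\Delta$ then follows because $W$ is integral and dominates the smooth curve $\Delta$, so $\OO_W$ is torsion free over $\OO_\Delta$. As $X$ is a smooth threefold, the reduced irreducible surface $W$ is a Weil divisor on a smooth variety, hence Cartier. Thus $W$ is a hypersurface in a smooth variety, hence Gorenstein, with dualising sheaf $\omega_W=(K_X+W)|_W$ invertible by adjunction. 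This proves the first claim.

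For the second claim, suppose in addition that $f$ is a closed embedding over $\Delta^*$. Then $W_t\cong Y_t$ is a smooth curve for $t\neq 0$, so the singular locus of $W$ lies in $W_0$. Since $W$ is Cartier in a smooth threefold, it is Cohen--Macaulay and in particular $S_2$. Serre's criterion therefore reduces normality of $W$ to $R_1$, that is, to $W$ being regular at the generic points of the components of $W_0$. The central fibre $W_0$ is itself a Cartier divisor in $W$ (cut out by $t$), so $W_0$ is Cohen--Macaulay of pure dimension one and has no embedded points. Hence $W_0$ is reduced if and only if it is generically reduced along each component.

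It remains to compare regularity of $W$ at a generic point $\eta$ of a component $\Gamma$ of $W_0$ with reducedness of $W_0$ there.
\begin{itemize}
\item If $W_0$ is reduced at $\eta$, then $\OO_{W,\eta}$ is a one-dimensional local ring in which $t$ is a nonzerodivisor and $\OO_{W,\eta}/(t)$ is a field. So the maximal ideal is principal, generated by $t$, and $\OO_{W,\eta}$ is a DVR. Hence $W$ is $R_1$ and so normal.
\item Conversely, if $W$ is normal, then $\OO_{W,\eta}$ is a DVR and $t$ has some valuation $e\geq 1$ there. It remains to show $e=1$.
\end{itemize}
Proving $e=1$ is the step I expect to be the main obstacle, and it uses stability. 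Since $W$ is normal and $f\colon Y\to W$ is birational (an isomorphism over $\Delta^*$), $f$ is an isomorphism over codimension-one points of $W$. So some component of $Y_0$ maps birationally onto $\Gamma$, and the order of $t$ along $\Gamma$ equals the multiplicity of that component in $Y_0$. Because $Y_0$ is a reduced nodal curve, that multiplicity is $1$. Components of $Y_0$ that are contracted by $f$ do not meet the generic point $\eta$, so they do not affect this computation. Hence $e=1$ and $W_0$ is reduced.

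The delicate point here is justifying that some component of $Y_0$ maps birationally, rather than only finitely, onto $\Gamma$. I would handle this with Zariski's main theorem: since $W$ is normal and $f$ is proper and birational, the fibre of $f$ over $\eta$ is connected. Moreover it is finite over $\eta$, since a positive-dimensional fibre over a point of the one-dimensional scheme $W_0$ would force $Y_0$ to contain a surface, which is impossible. A connected finite fibre over $\eta$ is a single point, so $f$ is a local isomorphism there.
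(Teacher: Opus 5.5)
Your proof is correct and follows essentially the same route as the paper: $W$ is a Cartier divisor in a smooth threefold, hence Gorenstein and Cohen--Macaulay; Serre's criterion gives normality when $W_0$ is reduced; and conversely, the local ring of $W$ at the generic point of a component of $W_0$ is identified with that of $Y$ at a component of $Y_0$, where $t$ has valuation $1$ because $Y_0$ is reduced. The differences are minor: the paper identifies these local rings using the fact that a DVR dominated by a local ring with the same fraction field equals it, rather than Zariski's main theorem, and your remarks that $W_0$ has no embedded points and that flatness follows from torsion-freeness make explicit two steps the paper leaves implicit.
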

\begin{proof}
    The image $W=f(Y)$ is a Cartier divisor in the smooth complex threefold $X$ so is Gorenstein and hence Cohen-Macaulay. As it is a family of proper curves, it is flat over $\Delta$. The morphism $f$ is proper birational onto its image and the central fibres, viewed as Cartier divisors $Y_0$ and $W_0$ pulled back from $t=0$ on $\Delta$, must satisfy the pushforward relation of cycles
    \[f_*[Y_0] = [W_0].\]
    From stability, $Y_0$ is a reduced curve, so $[Y_0] = \sum [C_i]$ where each component $C_i$ occurs with multiplicity $1$. For any component $C_i$ mapping dominantly to a curve $\Gamma \subset W_0$, it does so with degree $d_i \ge 1$, contributing $d_i [\Gamma]$ to $[W_0]$. Any component contracted by $f_0$ maps to a point and contributes $0$ to $[W_0]$.

    If $W$ is normal, then for every irreducible component $\Gamma \subset W_0$, the local ring $\OO_{W,\eta_\Gamma}$ is a DVR. Since $Y$ and $W$ are birational, we have $k(W)\cong k(Y)$, so if $C_i\subset Y_0$ is an irreducible component mapping dominantly to $\Gamma$, we have $\OO_{W,\eta_\Gamma} \subseteq \OO_{Y,\eta_{C_i}}$ and so they are isomorphic. In particular, the uniformiser has the same valuation $1$ on each ring and this is necessarily also the coefficient of $\Gamma$ in the divisor $W_0\in\Pic(W)$ and the degree of the morphism $f_0|_{C_i}\colon C_i\to \Gamma$. Moreover, exactly one component $C_i$ maps dominantly onto $\Gamma$, and it maps with degree $d_i=1$. Any other component of $Y_0$ must either map to a different curve in $W_0$ or be contracted to a point. In particular, $W_0$ is reduced.

    Conversely if every fibre of $W/\Delta$ is reduced, then $W$ is normal by Serre's criterion: Cohen--Macaulay, hence $S_2$, and $R_1$ since $\operatorname{Sing}(W)$ is contained in the finite singular locus of the reduced curve $W_0$, a set of codimension $2$ in the surface $W$.
\end{proof}

\subsection{A birational local flatness result}

Given a fibred surface $W$ in a threefold $X$, and a curve in the surface dominating the base, we would like to know whether blowing up the curve resolves in any way the singularities of the surface along the curve. Even though this statement is not exactly true as stated (see the example below), it is true on a local branch of the surface as we repeatedly blow up.

\begin{Lemma}\label{K3ISOTRIVIALE601}
Let $W$ be a hypersurface in $X = \Delta_{xyt}^3$ such that $W$ is smooth over $\Delta_t^*$ and let $\Gamma\subset W$ be a section of $W/\Delta_t$.
    Let
    \[
        \begin{tikzcd}
            X = X_{0} & X_{1} \ar{l} & ...\ar{l} & X_{n} \ar{l} & ... \ar{l}
        \end{tikzcd}
    \]
    be a sequence of blowups such that $X_{n+1}\to X_{n}$ is the blowup of $X_{n}$ along $\Gamma_{n}$, where $W_n$ is the proper transform of $W$ under $X_{n}\to X$ and $\Gamma_n \subset W_n$ is the proper transform of $\Gamma$ under $W_n\to W$. Then there exists $n_0$ such that for all $n\ge n_0$, $W_n$ has a smooth locally irreducible component in an analytic open neighbourhood of $\Gamma_n$.
\end{Lemma}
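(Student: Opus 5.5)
The plan is to reduce to explicit coordinates, attach to the branch of $W_n$ through $\Gamma_n$ a non-negative integer invariant measured along $\Gamma_n$, and show that this invariant cannot stay positive forever.

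\textbf{Normalising the section.} Since $\Gamma\subset W$ is a section of $W/\Delta_t$, it is a smooth curve $\{x=a(t),\,y=b(t)\}$ with $a,b$ holomorphic. After the holomorphic change of coordinates $x\mapsto x-a(t)$, $y\mapsto y-b(t)$ we may assume $\Gamma=\{x=y=0\}$.

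Write $W=\{F=0\}$ with $F\in(x,y)$, and expand
\[
F=\alpha(t)x+\beta(t)y+Q(x,y,t),\qquad Q\in(x,y)^2 .
\]
Because $W$ is smooth over $\Delta^*_t$ and contains $\Gamma$, the pair $(\alpha,\beta)$ has no common zero on $\Delta^*$. The analytic germ of $W$ along $\Gamma$ therefore has a unique local branch $B$ containing $\Gamma$, and it is enough to follow the strict transforms $B_n$ of $B$.

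\textbf{The invariant.} Define
\[
k(W,\Gamma)=\min\big(\ord_t\alpha,\ \ord_t\beta\big)=\ord_t\big((\partial_xF,\partial_yF)|_\Gamma\big),
\]
and apply the same definition to $(W_n,\Gamma_n)$ after renormalising coordinates at each step. If $k=0$, then $W$ is smooth at every point of $\Gamma$, including the point over $t=0$. In that case the branch through $\Gamma$ is a smooth, locally irreducible component, and the lemma holds from that stage on. Blowing up a curve along which the surface is smooth changes nothing, so the property persists for all later $n$.

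\textbf{One blowup.} Suppose $\ord_t\beta\le\ord_t\alpha$, so the relevant chart is $y=xy'$ (otherwise swap $x$ and $y$). Then $F(x,xy',t)=xF_1$, so $W_1=\{F_1=0\}$ and $F_1(0,y',t)=\alpha+\beta y'$.

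Since $W\to\ldots$ is an isomorphism over $\Delta^*$, the curve $\Gamma_1$ is the closure of the tangent-direction section, namely $\{x=0,\ y'=\varphi(t)\}$ with $\varphi=-\alpha/\beta$. The point is that $\varphi$ is holomorphic precisely because we chose the chart with $\ord\beta$ minimal.

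Recentre with $y''=y'-\varphi$. The chain rule $\partial_{y'}F_1=\partial_yF(x,xy',t)$ gives $\partial_{y''}F_1|_{\Gamma_1}=\beta$. It also gives
\[
\partial_xF_1|_{\Gamma_1}=Q_2(1,\varphi,t),
\]
the quadratic part of $Q$ evaluated on the tangent direction. Hence $k(W_1,\Gamma_1)\le k(W,\Gamma)$.

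\textbf{The expected obstacle: strict decrease.} The real work is showing that $k$ cannot remain constant and positive for infinitely many steps. My first attempt would be to refine the invariant to a lexicographic pair: $k$ together with the $t$-order of the relevant higher Taylor coefficients of $F$ along $\Gamma$ in the normal directions.

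Solving $F=0$ on the branch $B$ over $\Delta^*$ presents $B$ as a graph $y=h(x,t)$, with coefficients $h_j(t)$ meromorphic at $t=0$. One would then track how the pole orders of the $h_j$ change under $y\mapsto xy'$ and recentring. If $k$ stayed constant forever, the sections $\Gamma_n$ would determine a formal curve germ, given by the limit of the recentrings $\sum_j\varphi_j x^j$, lying in $B$ with $\partial_yF$ identically of order $k>0$ along it. Over $t\neq0$ this would contradict the smoothness of $W$ over $\Delta^*$, or alternatively the reducedness of $W$ along that germ.

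Making this limiting argument rigorous is the step I expect to be hardest, so I would first check it carefully in the model case $F=xy-t^k$ and in cases where $B$ is not normal. If the limiting argument resists, the fallback is to pass to the normalisation of $B$, which is smooth along the generic point of $\Gamma$, and argue that the repeated blowups eventually factor through it in a neighbourhood of $\Gamma_n$.
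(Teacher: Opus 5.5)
\textbf{There is a genuine gap: the termination step is missing, and the claim you intend to prove there is false.}

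Your invariant $k=\min(\ord_t\alpha,\ord_t\beta)$ is exactly the exponent $a$ in the paper's normal form $f=t^ay+O(x^2,xy,y^2)$. Your one-blowup computation is correct: $\beta$ survives, the new $x$-coefficient is $Q_2(1,\varphi,t)$, so $k$ cannot increase.

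You want to show that $k$ cannot stay constant and positive, i.e.\ that $W_n$ eventually becomes smooth along $\Gamma_n$. But $k$ \emph{can} stay positive forever. Take Example~\ref{K3ISOTRIVIALEXAMPLEFABLE}, $W=\{t^2x+x^2+ty^3=0\}$; there your invariant runs $2,2,1,1,1,\dots$. To see this, use the coordinates $(a,b,t)$ of that example and put $y=b+ta+ta^4+\cdots$. Then $F_2=y\,G$ with $G(0,y,t)=t$. Blowing up $\{a=y=0\}$ via $y=ay'$ gives $F_3=y'\,G(a,ay',t)$, and inductively $F_n=y^{(n)}G_n$ with $G_n(0,0,t)=t$. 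So the linear part is $t\,y^{(n)}$ at every stage: the second component $\{G_n=0\}$ always passes through $\Gamma_n\cap(W_n)_0$.

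Consequently the contradiction you hope to extract from the limit does not exist. A formal smooth surface $\{y=\Phi(x,t)\}\subset W$ through $\Gamma$, along which $\partial_yF$ has $t$-order $k>0$ on $\Gamma$, is compatible with smoothness over $\Delta^*$ (where $t^k\neq 0$) and with reducedness of $W$. The normalisation fallback is not yet an argument either.

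\textbf{The fix.} What the stable case actually yields is the lemma's conclusion, and with this second exit your approach closes. Suppose $k$ is constant from stage $N$ on. Then at each later step $\ord_tQ_2(1,\varphi,t)\ge\ord_t\beta$, so the same chart $y=xy_1$ is used every time. The recentrings then give $y=\sum_{j<n}\varphi_j(t)x^{j+1}+x^n\tilde y_n$, with
$F\big(x,\sum_{j<n}\varphi_jx^{j+1}+x^n\tilde y_n,t\big)=x^nF_n$ and $F_n(0,0,t)=0$.

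Hence $F\big(x,\sum_{j<n}\varphi_jx^{j+1},t\big)\in(x^{n+1})$ for every $n$. With $\Phi=\sum_j\varphi_jx^{j+1}\in\CC[[x,t]]$ this gives $F(x,\Phi,t)=0$. So $y-\Phi$ divides $F$ in $\CC[[x,y,t]]$, and $\{y=\Phi\}$ is a smooth local component of $W_N$ containing $\Gamma_N$. It is analytic, since an irreducible convergent germ stays irreducible after completion; the paper works in $\CC[[x,y,t]]$ anyway.

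Since $k\ge 0$ is non-increasing it must stabilise, and either exit ($k=0$, or a smooth component splitting off) proves the lemma.

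\textbf{Comparison with the paper.} The paper reaches the same dichotomy constructively. A Weierstrass-type substitution $y\mapsto y-\delta(x,t)$ with $\delta\in(x^2)$ puts $f$ either in the form $(t^a+g)y$ (the split case) or in the form $(t^a+g)y+x^mh(x,t)$ with $\deg_t h<a$. In the second case, $m-1$ blowups visibly lower $a$ to $\ord_t h(0,t)<a$. This explicit normal form controls the number of blowups at each stage, and it is what gets rerun over the generic point of $U$ in Proposition~\ref{K3ISOTRIVIALLEMCARTIER}. The corrected version of your argument is shorter but purely existential.
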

\begin{proof}
    We may assume that $\Gamma = \{x = y = 0\}$ and $W = \{f(x,y,t) = 0\}$ for some $f(x,y,t) \in \CC[[x,y,t]]$.
    Since $\Gamma\subset W$, $f(0,0,t)\equiv 0$. Since $W_t$ is smooth for $t\ne 0$, either $f_x(0,0,t)\not\equiv 0$ or $f_y(0,0,t)\not\equiv 0$. So after a change of coordinates, we may write the defining equation of $W$ as
    \begin{equation}\label{K3ISOTRIVIALE807}
        f(x,y,t) = t^a y + O(x^2,xy,y^2) = 0
    \end{equation}
    for some $a\ge 0$, where we use the notation $O(f_1,f_2,...,f_k)$ to denote an element in the ideal generated by $f_1,f_2,...,f_k$ in $\CC[[x,y,t]]$.

    If $a = 0$, then $W$ is smooth and we are done. We will show that $a$ decreases during blowups and eventually we have $a=0$.

    Write $f=(t^a+g)y+B(x,t)$ where $g(x,y,t) \in \CC[[x,y,t]]$ satisfies $g(0,0,t)\equiv0$, and $B(x,t)=f(x,0,t)\in(x^2)$.
    We can perform a coordinate change $y \mapsto y - \delta(x,t)$ with $\delta \in (x^2)$ to absorb the  $x$-dependent terms of $B(x,t)$ into the $y$ factor.
    Since $B(x,t) \in (x^2)$ and $\delta(x,t) \in (x^2)$, any remainder must also be in $(x^2)$. Therefore, if the remainder is non-zero, its lowest power of $x$ is $x^m$ for some $m \ge 2$.
    Thus, in the new coordinates, we have either
    \begin{equation}\label{K3ISOTRIVIALE820}
        f(x,y,t) = (t^a + g(x,y,t))y
    \end{equation}
    or 
    \begin{equation}\label{K3ISOTRIVIALE821}
        f(x,y,t) = (t^a + g(x,y,t))y + x^m h(x,t)
    \end{equation}
    for some $m\ge 2$, $g(x,y,t)\in \CC[[x,y,t]]$ and $h(x,t)\in \CC[[x,t]]$ satisfying $g(0,0,t)\equiv 0$, $h(0,t)\not\equiv 0$ and
    \begin{equation}\label{K3ISOTRIVIALE805}
        \dfrac{\partial^a h}{\partial t^a} \equiv 0.
    \end{equation}
    For example, if $f = t^2 y + x^2 t^3 + x^2 t$, we have $a=2$ and $B(x,t) = x^2 t^3 + x^2 t$. By defining $\delta(x,t) = x^2 t$ and performing the coordinate shift $y \mapsto y - \delta(x,t)$, the equation becomes $t^2(y - x^2 t) + x^2 t^3 + x^2 t = t^2 y + x^2 t$. The remainder $x^2 t$ has $t$-degree strictly less than $2$, i.e., \eqref{K3ISOTRIVIALE805} holds for $h(x,t)=t$.
    
    In the case of \eqref{K3ISOTRIVIALE820}, the component $V = \{y = 0\}\subset W$ contains $\Gamma$ and is smooth along $\Gamma$. The same holds for the proper transforms of $V$.
    
    In the case of \eqref{K3ISOTRIVIALE821}, after we blow up $X$ along $\Gamma$, we obtain $W_1$ given by
    \[
        (t^a + g(x, wx, t)) w + x^{m-1} h(x,t) = 0
    \]
    in $\Delta_{xwt}^3$, locally around $\Gamma_1 = \{x = w = 0\}$. After a further $m-2$ blowups, we obtain $W_{m-1}$ given by
    \begin{equation}\label{K3ISOTRIVIALE806}
        (t^a + g(x, w^{m-1} x, t)) w + x h(x,t) = 0.
    \end{equation}
    We can rewrite the left hand side of \eqref{K3ISOTRIVIALE806} as
    \[
        (t^a + g(x, w^{m-1} x, t)) w + x h(x,t)
        = t^a w + x h(0,t) + O(x^2, xw, w^2).
    \]
    Let $b = \operatorname{mult}_0 h(0,t)$ be the order of $h(0,t)\in \CC[[t]]$ in $t$.
    By \eqref{K3ISOTRIVIALE805}, $b < a$. We can write $h(0,t) = t^b u(t)$ where $u(0)\ne 0$. The linear terms of our equation are $t^a w + x t^b u(t) = t^b (t^{a-b}w + x u(t))$. By introducing new coordinates $w' = t^{a-b}w + x u(t)$ and $x' = w$, we can rewrite the defining equation of $W_{m-1}$ as
    \begin{equation}\label{K3ISOTRIVIALE808}
        t^b w' + O((x')^2, x' w', (w')^2) = 0.
    \end{equation}
    (In particular, if $b=0$, this equation is simply $w' + O((x')^2, x' w', (w')^2) = 0$, meaning $W_{m-1}$ is smooth). Comparing \eqref{K3ISOTRIVIALE807} and \eqref{K3ISOTRIVIALE808}, we see that the exponent $a$ strictly decreases. Repeating this argument proves the lemma.
\end{proof}

\begin{Example}\label{K3ISOTRIVIALEXAMPLEFABLE}
    An earlier version of the lemma above asserted the stronger conclusion that the strict transform $W_m$ becomes \emph{smooth along $\Gamma_m$} after finitely many blowups. This is false, and the need of working with an analytic subvariety to extract a smooth component is illustrated by the following counterexample. We thank Anthropic's Fable LLM for finding this counterexample.
    
    Let
    \[
        W_0 = \{\, t^2 x + x^2 + t\, y^3 = 0 \,\} \subset \Delta_{xyt}^3, \qquad \Gamma_0 = \{x = y = 0\}.
    \]
    This satisfies all the hypotheses: $W_0$ is flat over $\Delta_t$ with smooth general fibres, and $\Gamma_0$ is a section. 
    However, the strict transform $W_n$ remains singular at the section point $\Gamma_n\cap (W_n)_0$ for \emph{every} $n\ge 0$. 
    
    To see this, let us perform the first two blowups. 
    First, blowing up $\Gamma_0 = \{x=y=0\}$, in the chart $x = uv, y = v$ with exceptional divisor $v=0$, the strict transform $W_1$ has equation
    \[
        t^2 u + u^2 v + t v^2 = 0.
    \]
    Intersecting with $v=0$ gives the new section $\Gamma_1 = \{u=v=0\}$. 
    Blowing up $\Gamma_1$, in the chart $u = ab, v = b$ with exceptional divisor $b=0$, the strict transform $W_2$ has equation
    \[
        t^2 a + a^2 b^2 + t b = 0,
    \]
    with the new section $\Gamma_2 = \{a=b=0\}$. 
    At the origin $(a,b,t)=(0,0,0)$ of this chart, the lowest degree term is $tb$ (degree 2). Thus $W_2$ has multiplicity 2 and its tangent cone is cut out by $tb = 0$. 
    Every subsequent blowup will reproduce a singularity of the exact same form so $W_n$ never becomes smooth along the strict transform. Geometrically, the central fibre $(W_n)_0$ breaks into multiple non-reduced components, and $\Gamma_n$ always passes through their intersection.

    However, in the lemma above, in the analytic local ring, $W_2$ splits into components. Solving the quadratic equation $a^2 b^2 + t b + t^2 a = 0$ for $b$ as a formal power series yields $b = -ta - ta^4 - 2ta^7 - \ldots$. Thus, the local equation factors as
    \[
        (b + ta + ta^4 + \ldots)(a^2 b + t - a^3 t - \ldots) = 0.
    \]
    The first factor defines a component containing $\Gamma_2$ which is smooth.
\end{Example}

We prove now a relative version of the above. For a family of smooth threefolds $X/U$, a family of fibred surfaces $W/U$ in $X$, and a family of sections $\Gamma/U$ as in the setup above, blowing up $\Gamma_u$ may introduce singularities to $X_u$, depending on the singularities of the central fibres $W_{0,u}$ of the fibration $W_u\to\Delta_t$. At any fixed blowup of local irreducible components of strict transforms of $\Gamma$, we can remove a closed subset from $U$ to ensure $X_u$ is smooth outside this locus, but repeating the process it is not clear that one open subset of $U$ exists which will work for all blowups. The following proves precisely this is possible. 

\begin{Proposition}\label{K3ISOTRIVIALLEMCARTIER}
    Let $W$ be a hypersurface in $X = \Delta_{xyt}^3\times U$ for $U = \Delta^\ell$ such that $W$ is smooth over $\Delta_t^* \times U$ and let $\Gamma\subset W$ be a section of $W/(\Delta_t\times U)$.
    Let
    \[
        \begin{tikzcd}
            X = X_{0} & X_{1} \ar{l} & ...\ar{l} & X_{n} \ar{l} & ... \ar{l}
        \end{tikzcd}
    \]
    be a sequence of blowups where
    \begin{itemize}
    \item $X_1\to X_0$ is the blowup of $X_0 = X$ along $\Gamma$ with exceptional divisor $E_1$ and proper transform $W_1\subset X_1$ of $W_0 = W$;
    \item $X_2\to X_1$ is the blowup of $X_1$ along $\Gamma_1$, which is the irreducible component of $W_1\cap E_1$ dominating $\Delta_t\times U$, with exceptional divisor $E_2$ and proper transform $W_2\subset X_2$ of $W_1$;
    \item $X_n\to X_{n-1}$ is the blowup of $X_{n-1}$ along $\Gamma_{n-1}$, which is the irreducible component of $W_{n-1}\cap E_{n-1}$ dominating $\Delta_t\times U$, with exceptional divisor $E_n$ and proper transform $W_n\subset X_n$ of $W_{n-1}$.
    \end{itemize} 
    Then there exists a nonempty Zariski open set $U^\circ\subset U$ such that $X_m$ is smooth over $U^\circ$, $W_m$ is flat over $U^\circ$, and $\Gamma_m$ is a section of $X_m$ over $\Delta_t\times U^\circ$ for all $m$.
\end{Proposition}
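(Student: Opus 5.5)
We plan to run the coordinate computation of Lemma \ref{K3ISOTRIVIALE601} with coefficients in the ring of $U$, and to show that only finitely many closed "bad" loci in $U$ appear, even though the sequence of blowups is infinite. Write $R=\CC\{u_1,\dots,u_\ell\}$ (or its fraction field $K$ after shrinking). Choose coordinates with $\Gamma=\{x=y=0\}$ and
\[
f(x,y,t,u)\in R[[x,y,t]], \qquad f(0,0,t,u)\equiv 0 .
\]
Since $W$ is smooth over $\Delta_t^*\times U$, one of $f_x(0,0,t,u)$, $f_y(0,0,t,u)$ is nonzero. Over the generic point of $U$, write its $t$-adic order as $a$, with leading coefficient $c(u)t^a$ and $c\not\equiv0$.

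First we remove $\{c=0\}$ from $U$. After this, $f=t^a\,\epsilon(t,u)\,y+O(x^2,xy,y^2)$ with $\epsilon$ a unit, uniformly over $U^\circ$, and we may absorb $\epsilon$. Next we perform the change of variables $y\mapsto y-\delta(x,t,u)$ exactly as in the lemma. This division-type step (Weierstrass division by $t^a$ in the $t$-variable) works over $R$ whenever the leading coefficient is a unit. It puts $f$ in the form \eqref{K3ISOTRIVIALE820} or \eqref{K3ISOTRIVIALE821}, with $h(x,t,u)$ of $t$-degree $<a$.

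The blowups along $\Gamma_n$ are then given by the same explicit charts as in the lemma, uniformly in $u$. The only new datum is $b=\operatorname{ord}_t h(0,t,u)$ at the generic point of $U$. Its leading coefficient $c'(u)$ is a nonzero analytic function, and we remove its zero locus. On the complement, the coordinate change $w'=t^{a-b}w+xu(t)$ is an isomorphism over the whole of $U^\circ$. So after finitely many blowups we return to the normal form \eqref{K3ISOTRIVIALE807} with exponent $b<a$.

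The key point is that $a$ strictly decreases. Hence only finitely many "reduction rounds", at most $a+1$, occur before either $a=0$ or case \eqref{K3ISOTRIVIALE820} is reached, and each round removes finitely many proper closed subsets. Once $a=0$, $W_n$ is smooth along $\Gamma_n$ over $U^\circ$. From then on each blowup along the smooth curve-family $\Gamma_n$ (a section over $\Delta_t\times U^\circ$) inside a smooth $W_n$ is harmless. The strict transform maps isomorphically, $X_{n+1}$ is smooth, and $\Gamma_{n+1}=W_{n+1}\cap E_{n+1}$ near the relevant component is again a section. The same holds in case \eqref{K3ISOTRIVIALE820} for the smooth branch $\{y=0\}$.

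Smoothness of $X_m$ over $U^\circ$ follows at every stage because each center $\Gamma_n$ is a section of a smooth family over $\Delta_t\times U^\circ$, hence smooth of codimension two in a smooth $X_n$. Flatness of $W_m$ over $U^\circ$ then follows since $W_m$ is a Cartier divisor in $X_m$ whose restriction to each fibre $X_{m,u}$ is still a divisor (no fibre component is contained in it, as we checked via the explicit equations). Finally, we intersect the finitely many open sets and note that the complement of zeros of nonzero analytic functions on $\Delta^\ell$ gives the required $U^\circ$; if Zariski openness is needed, one works algebraically with $U$ a variety and analytic localization only in $x,y,t$.

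The main obstacle is uniformity: the Weierstrass-type normalizations $y\mapsto y-\delta$ and $(x,w)\mapsto(x',w')$ must be performed with coefficients in $R$ rather than over each point separately. I will handle this by working over the generic point and noting that every division involved is by a power of $t$ times a unit, once the finitely many leading coefficients are inverted.
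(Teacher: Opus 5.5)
Your proposal is correct and follows essentially the paper's argument. The paper also obtains a single $n_0$ on a dense open set by running the proof of Lemma \ref{K3ISOTRIVIALE601} over the generic point of $U$; the point is that the exponent $a$ can only drop finitely often. It then handles the tail by noting that once $W_{n_0}$ has a smooth local component $\{y=0\}$ through $\Gamma_{n_0}$, the later strict transforms are $g(x,x^kw,u,t)\,w=0$ with $g(0,0,u,t)\not\equiv 0$, which is your ``harmless tail'' step, and your check that each normalisation only divides by $t^a$ times a unit after inverting finitely many leading coefficients spells out what the paper states in one line.
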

\begin{proof}
By removing a closed subset at every blowup, it is clear that such $U^\circ$ exists as the complement of countably many proper closed subvarieties of $U$. The point is to prove that it is the complement of finitely many proper closed subvarieties of $U$.

Let $X_u$, $W_u$ and $\Gamma_u$ be the fibres of $X/U$, $W/U$ and $\Gamma/U$ over a point $u\in U$, respectively. For a general point $u\in U$, from Lemma \ref{K3ISOTRIVIALE601}, there exists $n_0\in \ZZ^+$ such that $W_{n_0,u}$ has a smooth locally irreducible component along $\Gamma_{n_0,u}$, where $W_{n_0,u}\subset W_{n_0}$ and $\Gamma_{n_0, u}\subset \Gamma_{n_0}$ are the proper transforms of $W_u$ and $\Gamma_u$ under $X_n\to X$ and $W_n\to W$, respectively.

There exists a nonempty Zariski open set $U^\circ$ of $U$ such that $X_m$ is smooth over $U^\circ$, $W_m$ is flat over $U^\circ$, and $\Gamma_m$ is a section of $X_m$ over $\Delta_t\times U^\circ$ for all $m\le n_0$. For simplicity, we replace $U$ with $U^\circ$ and assume that $X_m$ is smooth over $U$, $W_m$ is flat over $U$, and $\Gamma_m$ is a section of $X_m$ over $\Delta_t\times U$ for all $m\le n_0$.

Since $W_{n_0,u}$ has a smooth locally irreducible component along $\Gamma_{n_0,u}$ for a general point $u\in U$, $W_{n_0}$ has a smooth locally irreducible component along $\Gamma_{n_0}$. That a single $n_0$ suffices over a dense Zariski open, rather than fibrewise with $n_0 = n_0(u)$ can be seen by running the argument in the proof of Lemma \ref{K3ISOTRIVIALE601} over the generic point of $U$ and spreading the resulting smooth component out. All conclusions below are local on $\Delta_t\times U$. In an analytic open neighbourhood of $\Gamma_{n_0}$, $X_{n_0}\cong \Delta_{xyt}^3\times U$ and we assume that
$\Gamma_{n_0} = \{x = y = 0\}$. Then $W_{n_0}$ is given by
\[
g(x,y,u,t) y = 0
\]
for some $g(x,y,u,t) \in \BC[[x,y,u,t]]$ with $V_{n_0} = \{y = 0\}$ the smooth component of $W_{n_0}$ containing $\Gamma_{n_0}$. Since $W_{n_0}$ is smooth over $\Delta_t^* \times U$, $V_{n_0}$ is the only component containing $\Gamma_{n_0}$ and so $g(0,0,u,t)\not\equiv 0$. Then $W_{n_0+k}$ is given by
\[
g(x, x^k w,u, t) w = 0
\]
in an open neighbourhood of $\Gamma_{n_0 + k} = \{x = w = 0\}$ for all $k\ge 0$. This implies that $X_m$ is smooth over $U$, $W_m$ is flat over $U$, and $\Gamma_m$ is a section of $X_m$ over $\Delta_t\times U$ for all $m\ge n_0$ and hence for all $m$.
\end{proof}

\section{The main theorem}

The following result will imply Theorems \hyperref[thm:A]{A} and \hyperref[thm:B]{B}. We state it in as much generality as the proof affords, though the main case of interest is $X=S\times D$ the trivial family for $S$ a smooth projective surface with nef canonical divisor.

\begin{Theorem}\label{K3ISOTRIVIALTHMNORMAL}
    Let $C$ and $D$ be smooth projective curves, let $X$ be a flat projective family of surfaces over $D$, and
    let $f\colon Y=C\times D\dashrightarrow X$ be a rational map over $D$ such that
    \begin{itemize}
        \item $g(C)\ge 2$;
        \item $X$ is smooth;
        \item $f$ maps $Y$ birationally onto its image $W = f(Y)$;
        \item $W$ is normal;
        \item $K_X+W$ is relatively nef on $W$ over $D$;
        \item there exists $n_0$ such that
              \begin{equation}\label{K3ISOTRIVIALE801}
                  \HH^i(X_p, n(K_X + W) - W) = \HH^i(X_p, n(K_X + W)) = 0
              \end{equation}
              for all $i \ge 1$, $n\ge n_0$ and $p\in D$, where $X_p$ is the fibre of $X/D$ over $p\in D$;
        \item there exists an open set $Y^\circ\subset Y$ such that
              $Y\backslash Y^\circ$ consists of finitely many points, $f$ is regular on $Y^\circ$ and for every $p\in D$ and every $0$-dimensional subscheme $V$ of $X_p$ supported on $f(Y^\circ)\cap X_p$, there exists $n_1$, depending on the length of $V$, with the property that
              the map
              \begin{equation}\label{K3ISOTRIVIALE802}
                  \begin{tikzcd}
                      \HH^0(\OO_{X_p}(n(K_X + W))) \ar[two heads]{r} &
                      \HH^0(\OO_{V}(n(K_X + W)))
                  \end{tikzcd}
              \end{equation}
              is surjective for all $n\ge n_1$.
    \end{itemize}
    Then there does not exist $p\in D$ such that $W_p$ is integral and singular.
\end{Theorem}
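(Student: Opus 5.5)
Suppose for contradiction that $W_p$ is integral and singular for some $p\in D$. Resolve the indeterminacy of $f$ by a minimal sequence of point blowups $\psi\colon Z\to Y=C\times D$, giving $\phi\colon Z\to W\subset X$. Then $Z\to D$ is a semistable family: every fibre $Z_p$ is $C$ together with trees of rational curves attached. Since $W_p$ is integral, exactly one component of $Z_p$ maps birationally onto $W_p$, namely the proper transform $C_p\cong C$ of $C\times\{p\}$. All rational components are contracted by $\phi$. Because $W_p$ is singular, $\phi|_{C_p}\colon C\to W_p$ is not an isomorphism. So either some rational tree is attached at a point $q\in C$ and contracted to a singular point of $W_p$, or $C_p\to W_p$ identifies or ramifies points.

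Adjunction on the Gorenstein surface $W$ gives $\omega_W=(K_X+W)|_W$. Since $W$ is normal, the conductor of $C\to W_p$ is nonzero exactly when $W_p$ is singular. We then compare two Euler characteristics for $L_n:=n(K_X+W)$.

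\textbf{First computation (on $X$).} By the vanishing \eqref{K3ISOTRIVIALE801} and the exact sequence $0\to \OO_{X_p}(L_n-W)\to\OO_{X_p}(L_n)\to\OO_{W_p}(L_n)\to0$, we get $h^1(W_p,L_n)=0$ for $n\ge n_0$, and $h^0(W_p,L_n)$ is determined by flatness, hence independent of $p$. Twisting further by a $0$-dimensional subscheme $V$ supported at images of general points $s_1,\dots,s_k$ on $f(Y^\circ)$, the surjectivity \eqref{K3ISOTRIVIALE802} shows that $h^0(W_p, L_n\otimes I_V)$ equals $h^0(W_p,L_n)-\operatorname{length}V$ for every $p$. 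This is the same value at a general fibre $p'$, where $W_{p'}\cong C$ is smooth and $L_n|_{C}=nK_C$.

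\textbf{Second computation (on $C$).} Pull back to $C_p\cong C$. Sections of $L_n$ on $W_p$ inject into $H^0(C, \phi^*L_n)$ with image cut out by the conductor conditions at the singular points. Here $\phi^*(K_X+W)|_{C_p}=K_C+E$, where $E$ is an effective divisor supported on the attachment points $q$ (the conductor or tail contribution). Choose $V$ to impose vanishing of high order at general points $s_i$. The equality of $h^0$ forced by the first computation then reduces to a statement on $C$: a line bundle of the form $(2g-1)q-s_1-\dots-s_g+m(q-s_{g+1})$ in $\Pic^{g-1}(C)$ must have a section for all large $m$. Alternatively, in the dual form, such a bundle must fail to be nonspecial for general $s_i$.

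\textbf{Contradiction.} The first bullet of Proposition \ref{K3ISOTRIVIALPROPJACOBIAN} applies with $D=(2g-1)q-s_1-\dots-s_g$, the base point $q$, and $s_{g+1}$ playing the role of the general point. It gives $h^0(D+m(q-s_{g+1}))=\max(0,0)=0$ for $m\ge g$ and $s_{g+1}$ general. This contradicts the previous step.

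I expect the main obstacle to be the second computation. The task is to control the local structure of $\phi$ near the contracted trees well enough to identify precisely which twist of $K_C$ by multiples of $q$ appears, and to show that the sections coming from $W_p$ correspond exactly to the subspace described, with an honest discrepancy of at least one. For this I would use relative nefness of $K_X+W$ to conclude that $\phi^*(K_X+W)$ has degree $0$ on the contracted tails. I would use Lemma \ref{K3ISOTRIVIALLEMNODALSING} and Proposition \ref{K3ISOTRIVIALLEMCARTIER}, applied by blowing up along the section through $q$, to compute the multiplicity $m$ with which $q$ enters and to see that it can be made arbitrarily large.
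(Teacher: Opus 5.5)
Your proposal has a genuine gap, and it starts with the picture of $Z_p$. You assert that the proper transform $C_p\cong C$ maps birationally onto $W_p$ and that all rational components are contracted. This is backwards. The general fibre of $W/D$ is isomorphic to $C$ and $W$ is flat over $D$, so $p_a(W_p)=g$. An integral singular $W_p$ therefore has geometric genus $<g$ and cannot be the birational image of $C$.

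As in Lemma~\ref{K3ISOTRIVIALLEMDELTA}, $C_p=Q$ is contracted by $\phi$ to the singular point. The component mapping birationally onto $W_p$ is the rational curve $R_\mu$ at the end of a chain $R_1,\dots,R_\mu$ attached to $Q$ at $q$. Every section $\phi(\Gamma_{s_i})$ passes through $\phi(Q)$. Consequently $\phi^*(K_X+W)|_Q$ is trivial rather than $K_C+E$, and sections of $L_n$ on $W_p$ pull back to constants on $Q$. There is no conductor computation on $C$ of the kind you describe.

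Independently of this, the contradiction does not close. Your first computation applies \eqref{K3ISOTRIVIALE802} fibre by fibre and gives the same count on every fibre, so it cannot detect the singular fibre. Nothing you wrote forces $(2g-1)q-s_1-\dots-s_g+m(q-s_{g+1})$ to be effective for \emph{general} $s$. The first bullet of Proposition~\ref{K3ISOTRIVIALPROPJACOBIAN} only confirms the generic value $0$; it contradicts nothing.

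The paper's mechanism is instead a jump against constancy, for a global quantity:
\begin{enumerate}
\item By normality of $W$, $h^0(W,(n(K_X+W)+\pi^*A)\otimes I_{\phi(\Gamma_{s,m})})=h^0(Z,\calL-\Gamma_{s,m})$. This is constant for $s$ in a Zariski open $U\subset C^{g+1}$ independent of $m$. The proof blows up $X$ along the sections $\phi(\Gamma_{s_i})$, rewrites $h^0$ as a difference of Euler characteristics on smooth threefolds using \eqref{K3ISOTRIVIALE801} and \eqref{K3ISOTRIVIALE802}, and uses Proposition~\ref{K3ISOTRIVIALLEMCARTIER} to make $U$ uniform in $m$. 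Here $m$ is the free weight on $s_{g+1}$, not a multiplicity to be computed at $q$.
\item On $Z$, the Zariski decomposition of $\calL-K_Z-\Gamma_{s,m}$ and Kawamata--Viehweg give $h^0(\calL-\Gamma_{s,m})=\chi+h^1(\OO_{\Delta_p}(-\Gamma_{s,m}))$, with $\Delta_p=(3g+m-1)(\mu Q+(\mu-1)R_1+\dots+R_{\mu-1})$. A filtration of $\OO_{\Delta_p}$ with graded pieces having the cohomology of $\OO_C(kq-\sigma)$ shows this $h^1$ is minimal for general $s$. It is strictly larger whenever $h^0(\OO_C((2g+m-1)q-\sigma))>0$.
\item The \emph{second} bullet \eqref{K3ISOTRIVIALTHMGLOBALE035} shows that, for very general $(s_1,\dots,s_g)$, the number of such special $s_{g+1}$ tends to infinity with $m$. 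So one of them lies in the cofinite fibre of $U$, producing the jump inside $U$.
\end{enumerate}
What your argument is missing is exactly this: the special, not the general, behaviour of the degree $g-1$ class, detected on the surface $Z$ rather than on a single fibre.
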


We begin with some remarks clarifying the various assumptions.

\begin{Remark}
    \begin{enumerate}
        \item Note that both \eqref{K3ISOTRIVIALE801} and \eqref{K3ISOTRIVIALE802} are implied by $K_X+W$ being relatively ample. Also, both of them combined imply that
        \begin{equation}\label{K3ISOTRIVIALE803}
            \HH^i(\OO_{X_p}(n(K_X + W)) \otimes I_V) = 0
        \end{equation}
        for all $i\ge 1$, where $I_V$ is the ideal sheaf of $V$ in $X_p$.
        \item From Lemma \ref{K3ISOTRIVIALLEMIMAGE}, $W$ is normal if and only if $W_p$ is reduced for all $p\in D$.
    \end{enumerate}
\end{Remark}

\begin{proof}[Proof of {\hyperref[thm:A]{Theorem~A}}]
    Since $H$ is base point free, its generic member is smooth by Bertini's theorem. Since it is primitive and the Picard rank is one, all curves in the linear system are irreducible. The assumption that $K_S+H$ is nef guarantees that the generic curve has genus $g\ge 2$: writing $K_S=aH$ (as $\NS(S)=\ZZ H$), adjunction gives $2g-2=(K_S+H)\cdot H=(a+1)H^2$, so nefness forces $a\ge-1$. The case $a=-1$ is impossible, as it would make $-K_S=H$ an ample generator, forcing $S$ to be a Picard-rank-one del Pezzo, i.e.\ $\PP^2$, on which $-K_{\PP^2}=\OO(3)$ is not a generator. Thus $a\ge0$, $g\ge2$, and $K_S+H=(a+1)H$ is ample. 
    
    Note next that $\dim |H| \geq 2$, and if equality holds, the ampleness of $H$ implies the induced map $S \to |H|^\vee \cong \PP^2$ is a finite morphism. Since $S \not\cong \PP^2$, its degree $d$ is strictly greater than 1. As $S$ is smooth and $\PP^2$ is simply connected, the map must be ramified along a nonempty branch curve $B \subset \PP^2$. A curve in $|H|$ corresponds to the pullback of a line in $\PP^2$ and any line tangent to the branch locus $B$ pulls back to a singular curve in $S$, showing the discriminant locus $\Delta$ is nonempty. If $\dim |H| \ge 3$, the nonemptiness of $\Delta$ follows similarly by considering Lefschetz pencils.
    
    We prove now that $\mu$ is quasi-finite, i.e.\ that every fibre of $\mu$ is finite. Suppose for a contradiction that some fibre of $\mu$ is positive-dimensional. Taking the closure of a one-dimensional component of that fibre in the projective space $|H|$, we obtain a complete curve $D \subset |H|$ such that the intersection $U = D \cap |H|_{\sm}$ is a dense open subset of $D$ on which $\mu$ is constant. The family of curves, embedded as a surface over $D$ in $X = S \times D$, is birational to $C\times D$, possibly after a finite base change $\widetilde D\to D$ trivialising this family. Since $D$ is a complete curve in the projective space $|H|$, it must intersect the nonempty discriminant hypersurface $\Delta$, and so the family contains singular fibres.

    We now apply Theorem \ref{K3ISOTRIVIALTHMNORMAL} to the family over $D$. Restricted to fibres, $K_X+W$ is $K_S+H$, which is ample and hence relatively nef. As $K_S+H$ is ample, asymptotic vanishing and very ampleness for large $n$ hold, i.e., conditions \eqref{K3ISOTRIVIALE801} and \eqref{K3ISOTRIVIALE802}. The theorem then guarantees that no fibre can be an integral singular curve. This contradicts the necessity of singular fibres on $D$, forcing every fibre of $\mu$ to be finite; that is, $\mu$ is quasi-finite.
\end{proof}

\begin{Remark}\label{rem:properness}
    \hyperref[thm:A]{Theorem~A} asserts only that $\mu$ is \emph{quasi-finite}, and this cannot in general be strengthened to the assertion that $\mu$ is \emph{finite}, i.e.\ proper. Properness asks whether $\mu$ extends over the boundary: by the valuative criterion applied to the proper space $|H|$, it fails exactly when a one-parameter family $\{C_t\}_{t\ne0}$ of smooth members has a singular flat limit $C_0\in|H|$ while the moduli $[C_t]$ nonetheless converge to an interior point of $\mathcal M_g$, i.e.\ the stable limit of the $C_t$ is a smooth curve of genus $g$.
    Whether the stable limit is smooth is governed by the singularities of $C_0$ (see \cite[Ch.~3]{harrismorrison}). We expect that, even under the assumption $\NS(S)=\ZZ H$, there exist irreducible singular curves in $|H|$ with smooth stable reduction.
\end{Remark}

The rest of this section is concerned with a proof of Theorem \ref{K3ISOTRIVIALTHMNORMAL}. 

We first resolve the indeterminacy of the rational map $f$ with the commutative diagram
\begin{equation}\label{K3ISOTRIVIALE800}
    \begin{tikzcd}
        Z \ar{r}{\phi}\ar{d}[left]{\psi} & X\\
        Y \ar[dashed]{ru}[below right]{f}
    \end{tikzcd}
\end{equation}
where $\psi\colon Z\to Y$ is a projective birational morphism. We choose $Z$ to be smooth and $\psi$ to be minimal in the sense that for every $(-1)$-curve $E$ in the exceptional locus $E_\psi$ of $\psi$, $\phi_* E \ne 0$. Note that every fibre $Z_p$ of $Z/D$ has simple normal crossing support.

Let $\pi_C\colon Z\to C$ be the projection from $Z$ to $C$ via $Z\to Y \to C$ and let $\Gamma_s = \pi_C^{-1}(s)$ for $s\in C$. Clearly, outside of finitely many points $C\backslash C^\circ$, $\Gamma_s$ is a section of $Z/D$ for $s\in C^\circ$. For every $s\in C^\circ$, $\phi(\Gamma_s)$ is a section of $\pi\colon X\to D$. We choose $g+1 = g(C)+1$ distinct points $s_1,s_2,...,s_g, s_{g+1}$ on $C^\circ$ and $m\in \ZZ^+$ and let
\[
    \sigma = s_1 + s_2 + ... + s_g + m s_{g+1} \in \Pic(C).
\]

\begin{figure}[htbp]
    \centering
    \begin{tikzpicture}[>=stealth, x=1.2cm, y=1.2cm]

        \begin{scope}[shift={(0,0)}]
            \node[font=\large] at (0.5, -2.6) {$Z$};

            \filldraw[fill=black!5, draw=black!70, thick]
            (-1.35, -2) -- (1.35, -2) -- (2.35, 2) -- (-0.35, 2) -- cycle;

            \draw[thick] (-0.5, -1.8) .. controls (0.2, -0.6) and (-0.5, 0.6) .. (0.4, 1.8);
            \draw[thick] (1.0, -1.8) .. controls (1.7, -0.6) and (1.0, 0.6) .. (1.9, 1.8);

            \draw[thick] (-0.49, 1.44) -- (2.21, 1.44) node[right, xshift=5pt, fill=white, inner sep=1.5pt] {$\Gamma_1$};
            \draw[thick] (-0.625, 0.9) -- (2.075, 0.9) node[right, xshift=5pt, fill=white, inner sep=1.5pt] {$\Gamma_2$};
            \draw[thick] (-1.15, -1.2) -- (1.55, -1.2) node[right, xshift=5pt, fill=white, inner sep=1.5pt] {$\Gamma_3$};

            \draw[thick] (0.25, -1.8) .. controls (0.95, -0.6) and (0.25, 0.6) .. (1.15, 1.8) node[pos=0.85, left, xshift=-5pt, yshift=-2pt, fill=black!5, inner sep=1.5pt] {$C_0$};
            \draw[thick] (1.125, -0.5) -- (0.125, 0.5) node[pos=0.0, right, fill=black!5, inner sep=1.5pt] {$R$};
            \filldraw (0.625, 0.0) circle (1.5pt);

        \end{scope}

        \draw[->] (2.15, 0) -- (3.75, 0) node[midway, above] {\small $\phi$};

        \begin{scope}[shift={(4.9,0)}]
            \node[font=\large] at (0.5, -2.6) {$W$};

            \filldraw[fill=black!5, draw=black!70, thick]
            (-1.35, -2) -- (1.35, -2) -- (2.35, 2) -- (-0.35, 2) -- cycle;

            \draw[thick] (-0.5, -1.8) .. controls (0.2, -0.6) and (-0.5, 0.6) .. (0.4, 1.8);
            \draw[thick] (1.0, -1.8) .. controls (1.7, -0.6) and (1.0, 0.6) .. (1.9, 1.8);

            \draw[thick] (-0.49, 1.44) .. controls (-0.1, 1.44) and (0.2, 0.0) .. (0.625, 0.0)
                         .. controls (1.0, 0.0) and (1.7, 1.44) .. (2.21, 1.44) node[right, xshift=5pt, fill=white, inner sep=1.5pt] {$\phi(\Gamma_1)$};
            \draw[thick] (-0.625, 0.9) .. controls (-0.2, 0.9) and (0.2, 0.0) .. (0.625, 0.0)
                         .. controls (1.0, 0.0) and (1.5, 0.9) .. (2.075, 0.9) node[right, xshift=5pt, fill=white, inner sep=1.5pt] {$\phi(\Gamma_2)$};
            \draw[thick] (-1.15, -1.2) .. controls (-0.5, -1.2) and (0.2, 0.0) .. (0.625, 0.0)
                         .. controls (1.0, 0.0) and (1.2, -1.2) .. (1.55, -1.2) node[right, xshift=5pt, fill=white, inner sep=1.5pt] {$\phi(\Gamma_3)$};

            \draw[thick] (0.25, -1.8) .. controls (1.1, -0.8) and (1.25, 0.0) .. (0.625, 0.0)
                        .. controls (1.25, 0.0) and (0.2, 0.8) .. (1.15, 1.8) node[pos=0.85, right, xshift=5pt, fill=black!5, inner sep=1.5pt] {$W_0$};
            
            \filldraw (0.625, 0.0) circle (1.5pt) node[left, xshift=-2pt, fill=black!5, inner sep=1.5pt] {$\phi(C_0)$};

        \end{scope}

    \end{tikzpicture}
    \caption{Example of a birational morphism $\phi$. The central fibre of $Z$ consists of a curve $C_0 \cong C$ and a rational tail $R$. The curve $C_0$ is contracted to the singularity of $W_0$, $R$ maps dominantly onto the curve $W_0$, and the sections all pass through the singularity.}
    \label{fig:Z_to_W}
\end{figure}

We write
\[
    \Gamma_\sigma = \Gamma_{s, m} = \pi_C^* (\sigma) = \Gamma_{s_1} + \Gamma_{s_2} + ... + \Gamma_{s_g} + m \Gamma_{s_{g+1}} \in \Pic(Z)
\]
and let $I_{\phi(\Gamma_\sigma)}$ be the ideal sheaf of the Weil divisor $\phi(\Gamma_\sigma)$ in $W$. Since $W$ is normal, we have
\begin{align}
    \HH^0(W, L) &= \HH^0(Z, \phi^* L) \\
    \HH^0(W, L\otimes I_{\phi(\Gamma_\sigma)})
    &= \HH^0(Z, \phi^* L\otimes \OO_Z(-\Gamma_\sigma))\label{K3ISOTRIVIALEEQNORMAL}
\end{align}
for all line bundles $L$ on $W$.

We claim

\begin{Lemma}\label{K3ISOTRIVIALTHMNORMALCLAIM000}
    There exists a nonempty Zariski open set $U\subset C^{g+1}$ such that for every fixed $m\in \ZZ^+$,
    \[
        h^0(W, \OO_W(n(K_X + W) + \pi^*A)\otimes I_{\phi(\Gamma_{s,m})})
    \]
    is constant for all $s = (s_1,s_2,...,s_{g+1})\in U$, when
    $\deg A\gg n \gg m$, where $A\in \Pic(D)$ is a divisor on $D$.
\end{Lemma}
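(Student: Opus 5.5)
The plan is to turn $h^0$ into an Euler characteristic, which is locally constant in a flat family, by proving that all higher cohomology vanishes uniformly in $s$. The open set $U$ will be cut out by conditions that depend only on the geometry of $\phi$ and the sections $\Gamma_s$, and not on $m,n,A$. For fixed $m$, set $F_s=\OO_W(n(K_X+W))\otimes I_{\phi(\Gamma_{s,m})}$. As $s$ varies, the subschemes $\phi(\Gamma_{s,m})\subset W$ form a family over $C^{g+1}$.

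First I choose $U$. I take all $s_i\in C^\circ$, pairwise distinct, and avoiding the finitely many $s$ for which $\Gamma_s$ meets $\psi^{-1}(Y\setminus Y^\circ)$; this set is finite because $Y\setminus Y^\circ$ is finite. I also shrink $U$ so that the family of Weil divisors $\phi(\Gamma_{s,m})$ is flat over $U$. Flatness can be arranged independently of $m$: the support is $\bigcup_i \phi(\Gamma_{s_i})$, and the multiplicity-$m$ structure is defined locally by the $m$-th symbolic power of a family of curves. These curves are sections of $W/D$ moving in a one-parameter family, so generic flatness of $\bigcup_{s\in C}\phi(\Gamma_s)\to C$ handles all $m$ at once. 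With this $U$, the fibrewise subscheme $V_{s,p}=\phi(\Gamma_{s,m})\cap W_p$ is supported on $f(Y^\circ)\cap X_p$. Its length is at most $c(g+m)$ for a constant $c$ depending only on the local structure of $W$ along the finitely many bad points.

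Next, the fibrewise vanishing. Since $W$ is a Cartier divisor in $X$, tensoring $0\to\OO_{X_p}(-W)\to\OO_{X_p}\to\OO_{W_p}\to 0$ by $\OO(n(K_X+W))$ and applying \eqref{K3ISOTRIVIALE801} gives $\HH^1(W_p,n(K_X+W))=0$ and surjectivity of $\HH^0(X_p,\cdot)\to\HH^0(W_p,\cdot)$ for $n\ge n_0$. By \eqref{K3ISOTRIVIALE802}, applied to $V=V_{s,p}$ with $n\ge n_1(c(g+m))$ (this is where $n\gg m$ enters), the restriction map to $\OO_V$ is surjective, and hence so is the map from $\HH^0(W_p)$. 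Thus $\HH^1(W_p,F_s|_{W_p})=0$ for every $p\in D$ and $s\in U$, where I identify $F_s|_{W_p}$ with $\OO_{W_p}(n(K_X+W))\otimes I_{V_{s,p}}$ using flatness of $\phi(\Gamma_{s,m})$ over $D$. Cohomology and base change then give $R^1\pi_*F_s=0$, and $\pi_*F_s$ is locally free. Moreover the sequence
\[0\to \pi_*F_s\to \pi_*\OO_W(n(K_X+W))\to \pi_*\OO_{\phi(\Gamma_{s,m})}(n(K_X+W))\to 0\]
is exact, with the right-hand term locally free of rank equal to the (constant) relative length.

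Finally, the twist by $A$. By the Leray spectral sequence and $R^{\ge1}\pi_*F_s=0$, we get $\HH^i(W,F_s\otimes\pi^*A)=\HH^i(D,\pi_*F_s\otimes A)$. Therefore $h^0=\chi(D,\pi_*F_s\otimes A)$ as soon as $\HH^1(D,\pi_*F_s\otimes A)=0$. The Euler characteristic is constant for $s\in U$ by flatness, via Riemann--Roch on $D$: rank and degree are read off from the exact sequence above and are locally constant in $s$.

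The main obstacle is the uniform choice of $\deg A$. The bundles $\pi_*F_s$, $s\in U$, form a flat family of vector bundles on $D$, but $U$ is not compact, so I cannot simply quote boundedness. I would argue as follows. The family is a subsheaf of the fixed bundle $\pi_*\OO_W(n(K_X+W))$ with quotient of bounded rank and degree. Any quotient line bundle $Q$ of $\pi_*F_s$ then satisfies $\deg Q\ge -\deg\pi_*\OO_{\phi(\Gamma_{s,m})}(n(K_X+W)) - (\text{const})$, by comparing with sub-bundles of the fixed bundle. So the minimal slopes are bounded below uniformly in $s$, and $\HH^1(D,\pi_*F_s\otimes A)=\HH^0(D,(\pi_*F_s)^\vee\otimes K_D\otimes A^{-1})^\vee=0$ once $\deg A$ exceeds that bound plus $2g(D)-2$. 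If this slope estimate is awkward, the fallback is to extend the family of sections over a compactification of $U$ (replacing $\phi(\Gamma_{s,m})$ by its flat limit in the relative Hilbert scheme of $W/D$) and apply Castelnuovo--Mumford regularity on that projective parameter space.
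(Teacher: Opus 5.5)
\textbf{There is a genuine gap.} It is your claim that flatness of the subschemes $\phi(\Gamma_{s,m})$ over $U$ ``can be arranged independently of $m$'' by generic flatness of $\bigcup_s\phi(\Gamma_s)\to C$. Everything after that depends on it, in particular the assertion that $\deg\pi_*\OO_{\phi(\Gamma_{s,m})}(n(K_X+W))$ is locally constant in $s$.

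Generic flatness of the reduced family only handles $m=1$. For each $m$ you get some open $U_m$, but $\bigcap_m U_m$ is a priori only the complement of countably many proper closed subsets. That is useless for Lemma \ref{K3ISOTRIVIALTHMNORMALCLAIM001}, so this uniformity is the whole content of the lemma, not a technicality.

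The reason it is delicate: every $\phi(\Gamma_s)$ passes through the point $\phi(Q)$ to which the copy $Q\cong C$ in a singular fibre is contracted. Since $W$ is normal, $\OO_W(M)\otimes I_{\phi(\Gamma_{s,m})}=\phi_*\OO_Z(\phi^*M-\Gamma_{s,m})$, and for $M$ sufficiently ample the Leray spectral sequence gives
\[ h^0\big(W,\OO_W(M)\otimes I_{\phi(\Gamma_{s,m})}\big)=\chi(Z,\phi^*M-\Gamma_{s,m})+h^0\big(R^1\phi_*\OO_Z(-\Gamma_{s,m})\big). \]
The first term is independent of $s$. The second is controlled, through the thickenings of $Q$, by groups like $\HH^1(C,\OO_C(kq-\sigma))$. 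This is exactly the Brill--Noether jumping that Lemma \ref{K3ISOTRIVIALTHMNORMALCLAIM001} exploits, and it happens on loci that move with $m$.

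The paper takes a different route to uniformity. It blows up $X$ successively along the proper transforms of the $\phi(\Gamma_{s_i})$, so the multiplicity conditions become the Cartier divisor $\sum_i\nu_{s,g+m,i}^*E_{s,i}$ on the smooth threefolds $X_{s,g+m}$. It then rewrites $h^0$ as a difference of Euler characteristics of line bundles there, and invokes Proposition \ref{K3ISOTRIVIALLEMCARTIER} to make the families $\calX_n$ smooth over one Zariski open set for all $n$.

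\textbf{Why the step cannot be repaired.} For the divisorial ideal, which is what \eqref{K3ISOTRIVIALEEQNORMAL} needs, the uniformity you assert appears to fail in an example satisfying every hypothesis of Theorem \ref{K3ISOTRIVIALTHMNORMAL}. Take the Fermat family of Example \ref{ex:isotriv}(1) after the base change $\tau^4=1+t^4$.
\begin{enumerate}
\item Near each of the four points where a copy of $C$ is contracted, $W$ is the cone $\tau^4+x_2^4+x_3^4=0$ over $C$, and the curves $\phi(\Gamma_s)$ are its rulings.
\item Near $Q$, $Z$ is the total space of $\OO_C(-h)$, where $h$ is the hyperplane class. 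The formula above then becomes $h^0=\mathrm{const}+4\sum_{j\ge0}h^1(C,\OO_C(jh-\sigma))$.
\item For $m\equiv 3\pmod 4$, the summand with $\deg(jh-\sigma)=g-1$ jumps along the pullback of a theta divisor. By the argument of Proposition \ref{K3ISOTRIVIALPROPJACOBIAN}, for very general $(s_1,s_2,s_3)$ its intersection with $\{(s_1,s_2,s_3)\}\times C$ has unboundedly many points as $m\to\infty$.
\end{enumerate}
Hence no nonempty Zariski open $U$ works for all $m$.

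For the same reason, the step of the paper's own argument that deserves scrutiny is the first identification. There, $\HH^0(W,\OO_W(n(K_X+W)+\pi^*A)\otimes I_{\phi(\Gamma_{s,m})})$ is equated with sections on $W_{s,g+m}$ vanishing along $\sum_i\nu_{s,g+m,i}^*E_{s,i}$. Over the singular point, this divisor restricted to $W_{s,g+m}$ has vertical components, so the latter only computes sections of a possibly smaller ideal. Your slope argument for a uniform $\deg A$ is a secondary issue by comparison.
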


The essential point of the above lemma is that $U$ exists as a Zariski open set, i.e., $C^{g+1}\backslash U$ is a proper subvariety of $C^{g+1}$. The lemma trivially holds over $U$ if we further assume that $K_X+W$ is relatively ample on $W$ over $D$, where $C^{g+1}\backslash U$ is a union of countably many proper subvarieties of $C^{g+1}$. The key in proving the above will be Proposition \ref{K3ISOTRIVIALLEMCARTIER}.

\begin{Lemma}\label{K3ISOTRIVIALTHMNORMALCLAIM001}
    Let $U$ be the Zariski open set obtained in Lemma \ref{K3ISOTRIVIALTHMNORMALCLAIM000}. Then there exist
    $s = (s_1,s_2,...,s_{g+1}), t=(t_1,t_2,...,t_{g+1})\in U$
    and $m\in \ZZ^+$ such that
    \begin{equation}\label{K3ISOTRIVIALE809}
        \begin{aligned}
             & \quad h^0(Z, \phi^* (n(K_X + W) + \pi^* A) - \Gamma_{s,m}) \\
             & \ne h^0(Z, \phi^* (n(K_X + W) + \pi^* A) - \Gamma_{t,m})
        \end{aligned}
    \end{equation}
    for $n\in \ZZ$ and $A\in \Pic(D)$ with $\deg A \gg n\gg m$, if $W_p$ is integral and singular for some $p\in D$.
\end{Lemma}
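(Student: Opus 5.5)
The plan is to locate the jump in $h^0$ on the component of $Z_p$ that lies over $C\times\{p\}$, and then use Proposition \ref{K3ISOTRIVIALPROPJACOBIAN} to produce two tuples in $U$ on either side of the jump. The hardest step will be the local computation in the third paragraph.

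\textbf{Step 1: geometry of the singular fibre.} Fix $p\in D$ with $W_p$ integral and singular. Let $C_0\cong C$ be the proper transform of $C\times\{p\}$ in $Z_p$; the other components of $Z_p$ are rational curves in trees attached to $C_0$. Since $W$ is normal, Lemma \ref{K3ISOTRIVIALLEMIMAGE} says exactly one component of $Z_p$ maps birationally onto $W_p$, and all other components are contracted.
\begin{itemize}
\item That component cannot be $C_0$. Otherwise the normalisation of $W_p$ would be $C$, of genus $g$. But $W_p$ has arithmetic genus $g$, by flatness from the smooth fibres $W_t\cong C$. So $W_p$ would be smooth, a contradiction.
\item Hence some rational component $R$ maps onto $W_p$, and $C_0$ is contracted to a point $x\in W_p$.
\item Let $q\in C_0$ be the point where the tree containing $R$ is attached (the picture of Figure \ref{fig:Z_to_W}).
\item Every section $\Gamma_s$ with $s\in C^\circ$ meets $C_0$ at the point $s$, so every $\phi(\Gamma_s)$ passes through $x$.
\end{itemize}

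\textbf{Step 2: reduce to a local count at $x$.} Put $L=n(K_X+W)+\pi^*A$ with $\deg A\gg n\gg m$. By \eqref{K3ISOTRIVIALEEQNORMAL}, $h^0(Z,\phi^*L-\Gamma_{s,m})=h^0(W,L\otimes I_{\phi(\Gamma_\sigma)})$. Filter by the fibres and use \eqref{K3ISOTRIVIALE801}, \eqref{K3ISOTRIVIALE802} and \eqref{K3ISOTRIVIALE803}, which make the higher cohomology vanish once $\deg A\gg0$. Away from the finitely many points of $W$ where $\phi(\Gamma_\sigma)$ fails to be Cartier, the conditions imposed by $\phi(\Gamma_\sigma)$ are independent of the choice of $s$. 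So the dependence on $s$ comes only from the non-Cartier points $x$, and the task is to compute the length of the local condition at $x$.

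\textbf{Step 3: the local computation (main obstacle).} I will pull the local condition back to $Z$ and work on a thickening of $C_0$ in $Z$.
\begin{itemize}
\item $\phi^*L$ is trivial near $C_0$, and $\Gamma_\sigma|_{C_0}=\sigma=s_1+\dots+s_g+ms_{g+1}$.
\item The normal bundle $N_{C_0/Z}=\OO_{C_0}(-Z_p\setminus C_0)|_{C_0}$ is supported at the attachment points, in particular at $q$.
\end{itemize}
Peeling off successive infinitesimal neighbourhoods of $C_0$ and using $g(C)\ge2$, I aim to show that the number of independent conditions at $x$ drops by one exactly when
\[
h^0\big(C,\ ((2g-1)q-s_1-\dots-s_g)+m(q-s_{g+1})\big)>0,
\]
which is the divisor of degree $g-1$ highlighted in the introduction. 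The delicate points are:
\begin{itemize}
\item pinning down the exact multiple of $q$ coming from the tree structure and the normal bundle;
\item ensuring the higher-order neighbourhoods contribute no further $s$-dependence, which is where $n\gg m$ is used.
\end{itemize}
If the coefficient of $q$ comes out differently, the same argument should still go through with the corrected divisor of degree $g-1$ in place of the one displayed.

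\textbf{Step 4: choosing $s$ and $t$.} Apply Proposition \ref{K3ISOTRIVIALPROPJACOBIAN} with base point $q$ and $D_s=(2g-1)q-s_1-\dots-s_g$. Since $s_1,\dots,s_g$ are general, $D_s\in\Pic^{g-1}(C)$ can be taken very general.
\begin{itemize}
\item By \eqref{K3ISOTRIVIALTHMGLOBALE034}, for $m\ge g$ and general $s_{g+1}$ this $h^0$ is zero. Together with the general $s_1,\dots,s_g$ this gives the tuple $t\in U$.
\item By \eqref{K3ISOTRIVIALTHMGLOBALE035}, as $m\to\infty$ the set of $s_{g+1}$ making this $h^0$ positive becomes infinite. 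Since $C^{g+1}\setminus U$ is a proper Zariski closed subset (Lemma \ref{K3ISOTRIVIALTHMNORMALCLAIM000}), only finitely many of these points are excluded once $s_1,\dots,s_g$ are general. So for $m$ large, some such $s_{g+1}$ gives a tuple $s\in U$ with positive $h^0$.
\end{itemize}
By Step 3 these two tuples have different values of $h^0$, which proves \eqref{K3ISOTRIVIALE809}.
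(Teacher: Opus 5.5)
Your Steps~1 and~4 essentially match the paper. Minimality of $\psi$ and integrality of $W_p$ in fact force $Z_p=Q+R_1+\dots+R_\mu$ to be a single chain attached at $q$, not a union of trees. The divisor $(2g+m-1)q-\sigma=((2g-1)q-s_1-\dots-s_g)+m(q-s_{g+1})$ is exactly the right one. The special tuple is found in the fibre of $U$ over a very general $(s_1,\dots,s_g)$ via \eqref{K3ISOTRIVIALTHMGLOBALE035}, as you say.

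The genuine gap is Step~3. It is the entire content of the lemma, and you only state it as an aim; Step~2 does not prepare it. The hypotheses \eqref{K3ISOTRIVIALE801}--\eqref{K3ISOTRIVIALE803} concern line bundles and zero-dimensional subschemes on the fibres $X_p$. They give no handle on the ideal $I_{\phi(\Gamma_\sigma)}$ at the non-Cartier point $x$, which is exactly where any $s$-dependence sits. When they are used to get vanishing on $W$, as in the proof of Lemma~\ref{K3ISOTRIVIALTHMNORMALCLAIM000}, the output is $s$-\emph{independence}, i.e.\ the other half of the contradiction. What is missing is a mechanism on $Z$ that does two things. It must (i) confine the $s$-dependence of $h^0(Z,\calL-\Gamma_{s,m})$, where $\calL=\phi^*(n(K_X+W)+\pi^*A)$, to the cohomology of an explicit bounded thickening of $Q$. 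It must (ii) convert a section of $\OO_C((2g+m-1)q-\sigma)$ into an actual jump.

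The paper supplies this via the Zariski decomposition $\calL-K_Z-\Gamma_{s,m}=P+\Delta$.
\begin{itemize}
\item For $n\gg m$ one has $\calL\cdot\Delta=0$, so $\Delta$ lives on $\phi$-contracted fibre components.
\item Solving $\Delta\cdot F=-(K_Z+\Gamma_{s,m})\cdot F$ with $K_ZQ=2g-1$, $Q^2=-1$, $\Gamma_{s,m}Q=g+m$ gives the integral divisor $\Delta_p=(3g+m-1)(\mu Q+(\mu-1)R_1+\dots+R_{\mu-1})$.
\item Kawamata--Viehweg vanishing then gives $h^0(\calL-\Gamma_{s,m})=\chi(\calL-\Gamma_{s,m})+h^1(\OO_{\lfloor\Delta\rfloor}(\calL-\Gamma_{s,m}))$, with $\chi$ independent of $s$. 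By upper semicontinuity it suffices to make $h^1(\OO_{\Delta_p}(-\Gamma_{s,m}))$ jump at a single $p$.
\item Filtering $\OO_{\Delta_p}$ by multiples of $Q$ (the divisors $G_k$ when $\mu>1$) gives graded pieces with the cohomology of $\OO_C(kq-\sigma)$, $0\le k\le 3g+m-2$.
\end{itemize}
Writing the rest for $\mu=1$ (the $G_k$-filtration handles general $\mu$ identically):
\begin{itemize}
\item For general $t$, every piece has $h^0=0$ (when $k\le 2g+m-1$) or $h^1=0$ (when $k\ge 2g+m$). Hence $h^1(\OO_{\Delta_p}(-\Gamma_{t,m}))=-\chi(\OO_{(2g+m)Q}(-\Gamma_{t,m}))$.
\item For arbitrary $s$, $h^1$ can only increase along the filtration. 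Hence $h^1(\OO_{\Delta_p}(-\Gamma_{s,m}))\ge h^0(\OO_C((2g+m-1)q-\sigma))-\chi(\OO_{(2g+m)Q}(-\Gamma_{s,m}))$.
\end{itemize}
This computation settles three points in your proposal.
\begin{itemize}
\item It fixes the coefficient $2g-1$ you were unsure about: $k=2g+m-1$ is the last step whose graded piece has degree at most $g-1$, namely exactly $g-1$.
\item It shows the comparison tuple $t$ must be general enough that all $3g+m-1$ graded pieces have generic cohomology, not merely that the $k=2g+m-1$ piece has no sections.
\item It shows only an inequality is needed. Your claim that the count changes ``by one exactly when'' that $h^0$ is positive is neither established nor required.
\end{itemize}
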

This will boil down to Proposition \ref{K3ISOTRIVIALPROPJACOBIAN}.

From \eqref{K3ISOTRIVIALEEQNORMAL} the above two lemmas contradict the assumption that $W_p$ is integral and singular and imply our theorem.
\begin{proof}[Proof of Lemma \ref{K3ISOTRIVIALTHMNORMALCLAIM000}]

Fixing $g+1$ distinct points $s_1,s_2,...,s_{g+1}$, we construct a sequence of blowups
\[
    \begin{tikzcd}
        X = X_{s,0} & X_{s,1} \ar{l} & ...\ar{l} & X_{s,g} \ar{l} & X_{s,g+1} \ar{l} & ... \ar{l}
    \end{tikzcd}
\]
where $X_{s,n}\to X_{s,n-1}$ is the blowup of $X_{s,n-1}$ along the proper transform of $\phi(\Gamma_{s_n})$ under $X_{s,n-1}\to X$ for $n=0,1,...,g+1$ and it is the blowup of $X_{s,n-1}$ along the proper transform of $\phi(\Gamma_{s_{g+1}})$ under $W_{s,n-1}\to W$ for $n\ge g+2$, with $W_{s,k}$ the proper transform of $W$ under $X_{s,k}\to X$. Note that all proper transforms $W_{s,n}\subset X_{s,n}$ of $W$ have connected fibres over $W$.

\begin{figure}[htbp]
    \centering
    \begin{tikzpicture}[>=stealth, x=1.3cm, y=1.3cm]

        \begin{scope}[shift={(0,0)}]
            \node[font=\large] at (0.5, -2.6) {$W_{s,0}$};

            \filldraw[fill=black!5, draw=black!70, thick]
            (-1.5, -2) -- (1.5, -2) -- (2.5, 2) -- (-0.5, 2) -- cycle;

            \draw[thin] (1.0, 1.6) .. controls (1.0, 0.6) and (0.6, 0.0) .. (0.5, 0.0)
                        .. controls (0.6, 0.0) and (1.0, -0.6) .. (1.0, -1.6) node[pos=0.9, right] {$W_0$};

            \draw[thick] (-0.6, 1.2) -- (1.6, -1.2) node[pos=0.1, right] {$\phi(\Gamma_1)$};
            \draw[thick] (-0.9, 0.0) -- (1.9, 0.0) node[pos=0.8, above] {$\phi(\Gamma_2)$};
            \draw[thick] (-1.2, -1.2) -- (2.2, 1.2) node[pos=0.1, right] {$\phi(\Gamma_3)$};

        \end{scope}

        \draw[<-] (2.3, 0) -- (3.9, 0) node[midway, above] {\small $\nu_{s,4,0}$};

        \begin{scope}[shift={(5.2,0)}]
            \node[font=\large] at (0.5, -2.6) {Total transform in $X_{s,4}$};

            \filldraw[fill=black!5, draw=black!70, thick]
            (-1.5, -2) -- (1.5, -2) -- (2.5, 2) -- (-0.5, 2) -- cycle;

            \fill[black!15]
            (-0.6, 1.2) -- (0.5, 0) -- (0.5, 0.85) -- (-0.6, 2.05) -- cycle;
            \draw[thick] (0.5, 0) -- (0.5, 0.85) -- (-0.6, 2.05) -- (-0.6, 1.2);
            \draw[thick, densely dashed] (-0.6, 1.2) -- (0.5, 0);

            \fill[black!25]
            (0.5, 0) -- (2.2, 1.2) -- (2.2, 1.7) -- (0.5, 0.5) -- cycle;
            \draw[thick] (2.2, 1.2) -- (2.2, 1.7);
            \draw[thick] (0.5, 0.5) -- (0.5, 0);
            \draw[thick, densely dashed] (0.5, 0) -- (2.2, 1.2);

            \fill[black!10]
            (0.5, 0.5) -- (2.2, 1.7) -- (2.2, 2.2) -- (0.5, 1.0) -- cycle;
            \draw[thick] (2.2, 1.7) -- (2.2, 2.2) -- (0.5, 1.0) -- (0.5, 0.5);
            \draw[thick, densely dashed] (0.5, 0.5) -- (2.2, 1.7);

            \fill[black!10]
            (-0.9, 0.0) -- (1.9, 0.0) -- (1.9, 0.85) -- (-0.9, 0.85) -- cycle;
            \draw[thick] (1.9, 0.0) -- (1.9, 0.85) -- (-0.9, 0.85) -- (-0.9, 0.0);
            \draw[thick, densely dashed] (-0.9, 0.0) -- (1.9, 0.0);
            \node at (1.4, 0.4) {$E_2$};

            \fill[black!25]
            (-1.2, -1.2) -- (0.5, 0) -- (0.5, 0.5) -- (-1.2, -0.7) -- cycle;
            \draw[thick] (0.5, 0) -- (0.5, 0.5);
            \draw[thick] (-1.2, -0.7) -- (-1.2, -1.2);
            \draw[thick, densely dashed] (-1.2, -1.2) -- (0.5, 0);
            \node at (-0.8, -0.6) {$E_4$};

            \fill[black!10]
            (-1.2, -0.7) -- (0.5, 0.5) -- (0.5, 1.0) -- (-1.2, -0.2) -- cycle;
            \draw[thick] (0.5, 0.5) -- (0.5, 1.0) -- (-1.2, -0.2) -- (-1.2, -0.7);
            \draw[thick, densely dashed] (-1.2, -0.7) -- (0.5, 0.5);
            \node at (-0.8, -0.15) {$E_3$};

            \fill[black!15]
            (0.5, 0) -- (1.6, -1.2) -- (1.6, -0.35) -- (0.5, 0.85) -- cycle;
            \draw[thick] (1.6, -1.2) -- (1.6, -0.35) -- (0.5, 0.85) -- (0.5, 0);
            \draw[thick, densely dashed] (0.5, 0) -- (1.6, -1.2);
            \node at (1.0, -0.2) {$E_1$};

        \end{scope}

    \end{tikzpicture}
    \caption{The case $g=2$ and $m=2$.}
    \label{fig:blowup_surfaces}
\end{figure}

Also observe that
\[
    K_{X_{s,n}} + W_{s,n} = \nu_{s,n,0}^*(K_X + W)
\]
where $\nu_{s,i,j}$ is the map $X_{s,i}\to X_{s,j}$.
Then
\[
    \begin{aligned}
         & \quad \HH^0(W, \OO_W(n(K_X + W) + \pi^*A)\otimes I_{\phi(\Gamma_{s,m})})                              \\
         & = \HH^0(W_{s,g+m}, \nu_{s,g+m,0}^* (n(K_X + W) + \pi^* A) - \sum_{i=1}^{g+m} \nu_{s,g+m,i}^* E_{s,i}) \\
         & = \HH^0(W_{s,g+m}, \nu_{s,g+m,0}^* (n(K_X + W) - W + \pi^* A) + W_{s,g+m})
    \end{aligned}
\]
for all $n\in \ZZ$ and $A\in \Pic(D)$,
where $E_{s,i}\subset X_{s,i}$ is the exceptional divisor of $X_{s,i}\to X_{s,i-1}$.

We have the exact sequence
\[
    \begin{tikzcd}
        0 \ar{r} & \OO_{X_{s,g+m}}(\nu_{s,g+m,0}^* (n(K_X + W) - W + \pi^* A))\\
        \ar{r} & \OO_{X_{s,g+m}}(\nu_{s,g+m,0}^* (n(K_X + W) - W + \pi^* A) + W_{s,g+m})\\
        \ar{r} & \OO_{W_{s,g+m}}(\nu_{s,g+m,0}^* (n(K_X + W) - W + \pi^* A) + W_{s,g+m}) \ar{r} & 0
    \end{tikzcd}
\]
From \eqref{K3ISOTRIVIALE801}, we have
\[
    \begin{aligned}
         & \quad \HH^i(\OO_{X_{s,g+m}}(\nu_{s,g+m,0}^* (n(K_X + W) - W + \pi^* A))) \\
         & = \HH^i(\OO_{X}(n(K_X + W) - W + \pi^* A)) = 0
    \end{aligned}
\]
for all $i\ge 1$, $n\in \ZZ$ and $A\in \Pic(D)$ with $\deg A \gg n \gg m$. From \eqref{K3ISOTRIVIALE803}, we have
\begin{equation}\label{K3ISOTRIVIALE_VANISHING}
    \begin{aligned}
         & \quad \HH^i(\OO_{X_{s,g+m}}(\nu_{s,g+m,0}^* (n(K_X + W) -  W + \pi^* A) + W_{s,g+m}))   \\
         & = \HH^i(\OO_{X_{s,g+m}}(\nu_{s,g+m,0}^* (n(K_X + W) + \pi^* A) - E))                    \\
         & = \HH^i(\OO_{X}(n(K_X + W) + \pi^* A)\otimes (\nu_{s,g+m,0})_* \OO_{X_{s,g+m}}(-E)) = 0
    \end{aligned}
\end{equation}
for all $i\ge 1$, $n\in \ZZ$ and $A\in \Pic(D)$ with $\deg A \gg n \gg m$, where
\[
    E = \sum_{i=1}^{g+m} \nu_{s,g+m,i}^* E_{s,i}.
\]
Here the penultimate equality follows from the projection formula and the degeneracy of the Leray spectral sequence, as
\[
    R^i(\nu_{s,g+m,0})_* \OO_{X_{s,g+m}}(-E) = 0 \quad \text{for } i \ge 1.
\]
The final vanishing in \eqref{K3ISOTRIVIALE_VANISHING} follows by pushing forward to $D$: condition \eqref{K3ISOTRIVIALE803} guarantees the higher direct images vanish, and Serre vanishing on the curve $D$ kills the remaining first cohomology since $\deg A \gg n$.
Thus,
\[
    \HH^i(\OO_{W_{s,g+m}}(\nu_{s,g+m,0}^* (n(K_X + W) -  W + \pi^* A) + W_{s,g+m})) = 0
\]
for all $i\ge 1$, $n\in \ZZ$ and $A\in \Pic(D)$ with $\deg A \gg n \gg m$. Therefore,
\begin{equation}\label{K3ISOTRIVIALE804}
    \begin{aligned}
         & \quad h^0(W, \OO_W(n(K_X + W) + \pi^*A)\otimes I_{\phi(\Gamma_{s,m})})           \\
         & = h^0(W_{s,g+m}, \nu_{s,g+m,0}^* (n(K_X + W) - W + \pi^* A) + W_{s,g+m})
        \\
         & = \chi(\OO_{W_{s,g+m}}(\nu_{s,g+m,0}^* (n(K_X + W) -  W + \pi^* A) + W_{s,g+m})) \\
         & = \chi(\OO_{X_{s,g+m}}(\nu_{s,g+m,0}^* (n(K_X + W) - W + \pi^* A) + W_{s,g+m}))  \\
         & \quad - \chi(\OO_{X_{s,g+m}}(\nu_{s,g+m,0}^* (n(K_X + W) - W + \pi^* A)))
    \end{aligned}
\end{equation}
for all $n\gg m$ and $A\in \Pic(D)$ with $\deg A \gg n$.

For each fixed $m$, $X_{s,g+m}$ forms a flat family over a nonempty Zariski open set $U_{g+m}$ of $C^{g+1}$. More precisely, there exist a nonempty Zariski open set $U_{g+m}$ of $C^{g+1}$ and smooth families $\calX_n$ over $U_{g+m}$ for $n=0,1,...,g+m$ together with maps
\[
    \begin{tikzcd}
        \calX_i \ar{r} \ar{d} & \calX_j \ar{ld}\\
        U_{g+m}
    \end{tikzcd}
\]
for $i\ge j$ such that $\calX_0 = X\times U_{g+m}$, the fibre of $\calX_n/U_{g+m}$ over $s\in U_{g+m}$ is $X_{s,n}$ and we have the diagram
\[
    \begin{tikzcd}
        X_{s,i} \ar{d}\ar{r}{\nu_{s,i,j}} & X_{s,j} \ar{dl}\ar{dr}\\
        \{s\} \ar{dr} & \calX_i \ar[ul, <-, crossing over] \ar{r} \ar{d} & \calX_j \ar{ld}\\
        & U_{g+m}
    \end{tikzcd}
\]
for $0\le j\le i \le g+m$. By the flatness of $\calX_n$ over $U_{g+m}$ for $n=0,1,...,g+m$, we see that the right hand side of \eqref{K3ISOTRIVIALE804} is constant for $s\in U_{g+m}$. More precisely, it is a polynomial in $n$ and $a = \deg A$, independent of $s\in U_{g+m}$.

The main point of Lemma \ref{K3ISOTRIVIALTHMNORMALCLAIM000} is that we can choose a nonempty Zariski open set $U$ of $C^{g+1}$ such that the right hand side of \eqref{K3ISOTRIVIALE804} is constant on $U$ {\em for all} $m$. Clearly, we may choose
\[
    U = \bigcap_{m\in \ZZ^+} U_{g+m}
\]
but $U$ is not necessarily a Zariski open set. A priori, such $U$ is the complement of countably many proper subvarieties of $C^{g+1}$, but
from Proposition \ref{K3ISOTRIVIALLEMCARTIER}, we may choose $U$ which is a nonempty Zariski open set. 
\end{proof}

\begin{proof}[Proof of Lemma \ref{K3ISOTRIVIALTHMNORMALCLAIM001}]
    Let
    \[
        \calL = \phi^* (n(K_X + W) + \pi^* A).
    \]
    The aim of the lemma is to compute $\HH^i(Z, \calL-\Gamma_{s,m})$ in terms of the controllable invariants of the singular fibres $Z_p$. By our hypothesis, $\calL$ is big and nef on $Z$ and $\calL - K_Z - \Gamma_{s,m}$ is big when $\deg A \gg n\gg m$. Let
    \[
        \calL - K_Z - \Gamma_{s,m} = P + \Delta
    \]
    be the Zariski decomposition of $\calL - K_Z - \Gamma_{s,m}$, where $P$ is a big and nef $\QQ$-divisor, $\Delta$ is an effective $\QQ$-divisor whose components have negative definite self intersection matrix and $P\Delta = 0$.

    \begin{Lemma}
    Let $X$ be a smooth projective surface and let $A, B$ and $C$ be divisors on $X$ such that $A$ and $B$ are nef and $nA + C$ is pseudo-effective for $n\gg 0$. Let
    $$
    nA + B + C = P_n + \Delta_n
    $$
    be the Zariski decomposition of $nA + B + C$ for $n\gg 0$ with $\QQ$-divisors $P_n$ and $\Delta_n$ such that $P_n$ is nef and $\Delta_n$ is effective. Then there exists $n_0$, independent of $B$, such that $A \Delta_n = 0$ for all $n\ge n_0$.
    \end{Lemma}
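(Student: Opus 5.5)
The plan is to show that the negative part $\Delta_n$ eventually only involves curves $E$ with $A\cdot E=0$, and that the relevant finite set of curves does not depend on $B$.

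\emph{Step 1: negative curves are $A$-trivial.} Let $E$ be any irreducible curve with $E^2<0$ and $A\cdot E>0$. Then $A\cdot E\ge 1$ if $A$ is integral, or $A\cdot E$ is bounded below by a fixed positive constant in general. For such a curve,
\[
E\cdot(nA+B+C)\ \ge\ n\,A\cdot E + C\cdot E\ \ge\ n + C\cdot E,
\]
using that $B$ is nef. This is positive as soon as $n>-C\cdot E$.

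\emph{Step 2: only finitely many curves matter, independently of $B$.} For $E$ a component of $\Delta_n$ we have $E^2<0$. The problem is to bound $-C\cdot E$ uniformly over all curves $E$ that could appear, in a way that does not involve $B$. My approach is to use the pseudo-effectivity of $nA+C$. Fix $n_1$ with $n_1A+C$ pseudo-effective and take its Zariski decomposition $n_1A+C=P'+N'$. For a curve $E$ not contained in $\operatorname{Supp}N'$, $E\cdot(n_1A+C)\ge E\cdot N'\ge0$, so $C\cdot E\ge -n_1A\cdot E$. Then for $n>n_1$,
\[
E\cdot(nA+B+C)\ \ge\ (n-n_1)\,A\cdot E\ >\ 0
\]
whenever $A\cdot E>0$. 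The finitely many components of $N'$ are handled separately by Step 1, with the constant $\max(-C\cdot E_j)$ over those components. So there is an $n_0$, depending only on $A$, $C$ and $n_1$, such that for $n\ge n_0$ every curve $E$ with $A\cdot E>0$ satisfies $E\cdot(nA+B+C)>0$.

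\emph{Step 3: conclude.} I then use the standard property of the Zariski decomposition: every component $E$ of $\Delta_n$ satisfies $E\cdot P_n=0$, and the intersection matrix of $\operatorname{Supp}\Delta_n$ is negative definite. Take the supporting set $\operatorname{Supp}\Delta_n=\{E_1,\dots,E_r\}$. Suppose some $E_i$ has $A\cdot E_i>0$. I expect the main obstacle to be right here. The naive argument "a curve with $E\cdot(nA+B+C)>0$ cannot lie in $\Delta_n$" is false in general, because $\Delta_n\cdot E$ can be negative. The cleaner route is the Fujita/Zariski algorithm: $\operatorname{Supp}\Delta_n$ is the union of successive stages, each consisting of curves $E$ with $E\cdot(\text{current divisor})<0$. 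The first stage consists of curves with $E\cdot(nA+B+C)<0$, which by Step 2 satisfy $A\cdot E=0$. The subsequent stages are obtained by subtracting effective combinations of earlier curves. Since all earlier curves are $A$-trivial and the divisor is modified only by them, the same estimate persists: for a new curve $E$ with $A\cdot E>0$, $E$ is not among the earlier curves, so subtracting them only increases $E\cdot(\cdot)$. Hence no curve with $A\cdot E>0$ ever enters. Therefore $A\cdot\Delta_n=0$ for all $n\ge n_0$, where $n_0$ is independent of $B$ because $B$ was used only through $B\cdot E\ge0$.
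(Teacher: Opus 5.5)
Your Steps 1 and 2 are correct. For $n>n_1$, every curve $E$ with $A\cdot E>0$ satisfies $E\cdot(nA+B+C)>0$, with $n_0$ independent of $B$. The gap is in Step 3, where the sign goes the wrong way. At stage $k$ of the algorithm the current divisor is $D-N_{k-1}$, with $N_{k-1}\ge 0$ supported on earlier curves. For $E$ not among them $E\cdot N_{k-1}\ge 0$, so $E\cdot(D-N_{k-1})=E\cdot D-E\cdot N_{k-1}\le E\cdot D$. Subtracting the earlier curves can only \emph{decrease} the intersection with $E$, not increase it.

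Curves with $E\cdot D>0$ really do enter at later stages. Take $E_1,E_2$ with $E_i^2=-2$, $E_1\cdot E_2=1$, and $D=P+\tfrac76E_1+\tfrac13E_2$ with $P$ nef and $P\cdot E_i=0$. Then $D\cdot E_1=-2$ and $D\cdot E_2=\tfrac12$. The first stage gives $N_1=E_1$, then $(D-N_1)\cdot E_2=-\tfrac12$, and the negative part is $\tfrac76E_1+\tfrac13E_2$. This is also what kills the naive argument: a component $E$ of $\Delta_n$ can have $E\cdot\Delta_n>0$, not negative as you wrote. For your induction to work you would need $E\cdot N_{k-1}<E\cdot D$. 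That requires an upper bound on the partial negative parts that is uniform in $n$ and $B$, and nothing in Steps 1--2 supplies one.

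The missing idea, which is the core of the paper's proof, is the minimality of the Zariski negative part. If $D=P''+N''$ with $P''$ nef and $N''$ effective, then $N''\ge N$. Write $n_1A+C=P'+N'$ as in your Step 2. Then $nA+B+C=\bigl(P'+(n-n_1)A+B\bigr)+N'$ with the first summand nef, so $\Delta_n\le N'$ for all $n\ge n_1$ and all nef $B$.

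The paper uses this minimality in three ways:
\begin{enumerate}
\item It gives $\Delta_{n+1}\le\Delta_n$, so the support stabilises.
\item It reduces the statement to the case $B=0$.
\item It removes a component $F$ with $A\cdot F>0$: $P_m+\ell A+\varepsilon F$ is nef for $\ell\gg0$, which forces $\Delta_{m+\ell}\le\Delta_m-\varepsilon F$.
\end{enumerate}
With the bound $\Delta_n\le N'=\sum b_iE_i$, your own approach also closes at once. A component $E$ of $\Delta_n$ with $A\cdot E>0$ lies in the finite set $\supp N'$. Since $E\cdot P_n=0$, we get $nA\cdot E+C\cdot E\le E\cdot\Delta_n\le\sum_{E_i\ne E}b_i\,E_i\cdot E$. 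This fails for $n\gg0$, independently of $B$.
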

    
    \begin{proof}
    For a divisor $D = P + \Delta$ on $X$ with $P$ nef and $\Delta$ $\QQ$-effective, if there is a component $F$ of $\Delta$ such that $PF > 0$, then there exists $\varepsilon > 0$ such that $P +\varepsilon F$ is nef and $\Delta - \varepsilon F$ is effective. So we can rewrite $D$ as
    $$
    D = P + \Delta = (P+\varepsilon F) + (\Delta - \varepsilon F).
    $$
    Thus, if $D = P+\Delta$ is the Zariski decomposition of $D$, then
    $$\Delta' \ge \Delta$$
    for all decompositions $D = P' + \Delta'$ where $P'$ is nef, $\Delta'$ is effective and the components of $\Delta'$ have negative definite self intersection matrix.
    Consequently, we see that $\Delta_n \ge \Delta_{n+1}$ for $n\gg 0$. So the support $G = \supp(\Delta_n)$ of $\Delta_n$ stabilizes when $n$ is sufficiently large.
    
    It suffices to prove it for $B = 0$.
    Suppose that there is a component $F$ of $G$ such that $AF > 0$. We fix $m\gg 0$ such that $\supp(\Delta_m) = G$. Let $\varepsilon$ be the multiplicity of $F$ in $\Delta_m$. Since $AF > 0$, there exists $\ell\in \ZZ^+$ such that $\ell A + \varepsilon F$ is nef. Thus,
    $$
    \begin{aligned}
    P_{m+\ell} + \Delta_{m+\ell} &= (m+\ell) A + C = (mA + C) + \ell A\\
    &= (P_m + \ell A) + \Delta_m = (P_m + \ell A + \varepsilon F) + (\Delta_m - \varepsilon F).
    \end{aligned}
    $$
    And since $P_m + \ell A + \varepsilon F$ is nef and $\Delta_m - \varepsilon F$ is effective, we have
    $$
    \Delta_m - \varepsilon F \ge \Delta_{m+\ell}.
    $$
    This implies that $F\not\subset \supp(\Delta_{m+\ell})$, which is a contradiction.
    \end{proof}

From the above lemma, we conclude:
        For $\deg A \gg n \gg m$,
        \begin{align}\label{K3ISOTRIVIALE815}
            \calL \Delta = 0.
        \end{align}
        Thus, $\Delta$ is supported on curves $F$ contained in the fibres of $Z/D$ and satisfying $(K_X+W) \phi_* F = 0$.

    Hence $\Delta$ is uniquely determined by
    \begin{equation}\label{K3ISOTRIVIALE810}
        \begin{aligned}
            (\pi\circ \phi)_* \Delta & = 0,\ (K_X+W)\phi_*\Delta = 0,\ \text{and}                                         \\
            \Delta F                 & = - (K_Z + \Gamma_{s,m})F \hspace{12pt}\text{for all components } F\subset \Delta.
        \end{aligned}
    \end{equation}
    In what follows $\OO_{\lfloor \Delta\rfloor}$ (and similarly for $\OO_{\Delta_p}$) denotes the coherent sheaf
    \[
        \OO_{\lfloor \Delta\rfloor} = \dfrac{\OO_Z}{\OO_Z(-\lfloor \Delta\rfloor)}
    \]
    on $Z$.

    \begin{Lemma}\label{K3ISOTRIVIALLEMDELTA}
        Suppose that $W_p$ is integral and singular for some $p\in D$. Then the fibre $Z_p$ of $Z/D$ can be written as
        \[
            Z_p = Q + R_1 + R_2 + ... + R_\mu
        \]
        for some $\mu\in \ZZ^+$, where $Q\cong C$ and $R_1, \dots, R_\mu \cong \PP^1$ are distinct, and $R_\mu$ is the unique component mapping dominantly to $W_p$. Furthermore, the restriction $\Delta_p = \Delta\cap Z_p$ is a divisor with $\ZZ$-coefficients given by
        \begin{equation}\label{K3ISOTRIVIALE812}
            \Delta_p = (3g + m - 1) (\mu Q + (\mu-1)R_1 + (\mu-2)R_2 + ... + R_{\mu-1})
        \end{equation}
        and it satisfies $\OO_{\Delta_p} \otimes \OO_Z(\calL) \cong \OO_{\Delta_p}$.
    \end{Lemma}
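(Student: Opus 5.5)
The plan is to first pin down the combinatorics of $Z_p$ from the birational structure of $\psi$ and the integrality of $W_p$. Then I would solve the linear system \eqref{K3ISOTRIVIALE810} on the contracted components, and finally use that $\Delta_p$ is contracted by $\phi$ to a point.

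\textbf{Step 1: $Q$ is contracted and exactly one rational component survives.} Since $\psi$ is a composition of point blowups of $C\times D$, the fibre $Z_p$ consists of the strict transform $Q\cong C$ of $C\times\{p\}$ together with trees of smooth rational curves over the blown-up points. By Lemma \ref{K3ISOTRIVIALLEMIMAGE}, $W_p$ is reduced, and the cycle relation $\phi_*[Z_p]=[W_p]$ holds. Since $W_p$ is integral, exactly one component of $Z_p$ maps onto $W_p$, with degree one, and all other components are contracted. If this component were $Q$, then $W_p$ would be birational to $C$. But $W_p$ has arithmetic genus $g$, being a flat limit of curves of genus $g$, so $W_p$ would be smooth, contradicting the hypothesis. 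Hence $Q$ is contracted to a point, and the dominant component is rational; call it $R_\mu$.

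\textbf{Step 2: $Z_p$ is a chain.} Here I use the minimality of $\psi$: every $(-1)$-curve in the exceptional locus of $\psi$ must satisfy $\phi_*E\neq 0$, so it must be $R_\mu$. Any tree of exceptional curves over a blown-up point contains a $(-1)$-curve, namely the last curve blown up over that point. So there can be only one blown-up point on $Q$, and every further blowup must be centred on the most recent exceptional curve only, since otherwise some earlier branch would end in a second $(-1)$-curve. I would formalise this by induction on the number of blowups over $p$, tracking the unique $(-1)$-curve. The conclusion is the chain $Q-R_1-\dots-R_\mu$ with $R_i^2=-2$ for $i<\mu$ and $R_\mu^2=-1$. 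Since $Z_p\cdot Q=0$, we also get $Q^2=-1$. I expect this step to be the main obstacle. In particular, one must rule out a blowup at the node $R_{i-1}\cap R_i$, which would create a non-chain configuration whose new $(-1)$-curve is nonetheless the last one. My approach is to argue that such a blowup makes the chain branch, and that the resulting branch again terminates in a contracted $(-1)$-curve, contradicting minimality.

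\textbf{Step 3: solving for $\Delta_p$.} By \eqref{K3ISOTRIVIALE815}, $\Delta$ is supported on curves $F$ in fibres with $(K_X+W)\phi_*F=0$. In $Z_p$ these are precisely the contracted curves $Q,R_1,\dots,R_{\mu-1}$. Their intersection matrix is negative definite, as the exceptional locus of $Z\to W$. So \eqref{K3ISOTRIVIALE810} has the unique solution. To find it, write $\Delta_p=a_0Q+a_1R_1+\dots+a_{\mu-1}R_{\mu-1}$ and set $a_\mu=0$.
\begin{itemize}
\item The points $s_i\in C^\circ$ are general, so $\Gamma_{s,m}$ meets $Z_p$ only along $Q$, with $\Gamma_{s,m}\cdot Q=g+m$ and $\Gamma_{s,m}\cdot R_i=0$.
\item Adjunction gives $K_Z\cdot R_i=0$ for $i<\mu$ and $K_Z\cdot Q=2g-2+1=2g-1$.
\item The equations $\Delta\cdot R_i=0$ become $a_{i-1}-2a_i+a_{i+1}=0$, which forces $a_i=c(\mu-i)$.
\item The equation $\Delta\cdot Q=-(K_Z+\Gamma_{s,m})\cdot Q$ becomes $-c=-(3g+m-1)$.
\end{itemize}
This yields \eqref{K3ISOTRIVIALE812}, and the coefficients are visibly integral.

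\textbf{Step 4: triviality of $\calL$ on $\Delta_p$.} Finally, $\calL=\phi^*(n(K_X+W)+\pi^*A)$ is pulled back from $W$. The scheme $\Delta_p$ has connected support $Q\cup R_1\cup\dots\cup R_{\mu-1}$, which $\phi$ contracts to a single point $w\in W$. Any line bundle on $W$ is trivial on a neighbourhood of $w$. Its pullback is therefore trivial on a neighbourhood of $\supp\Delta_p$, hence on the thickening $\Delta_p$, giving $\OO_{\Delta_p}\otimes\OO_Z(\calL)\cong\OO_{\Delta_p}$.
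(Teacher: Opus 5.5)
Your Steps 1, 3 and 4 are sound and match the paper. That covers the genus argument forcing $Q$ to be contracted, solving \eqref{K3ISOTRIVIALE810} as an arithmetic progression with $c=3g+m-1$, and the triviality of $\calL$ near the contracted set. The gap is in Step 2, exactly at the point you flagged, and the fix you propose does not work.

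Blowing up the node $R_{i-1}\cap R_i$, or the point $Q\cap R_i$, does \emph{not} make the configuration branch. The new exceptional curve is inserted between the two curves, so the configuration stays a chain. The previous last curve becomes a $(-2)$-curve, and the new curve is again the unique $\psi$-exceptional $(-1)$-curve. Minimality of $\psi$ therefore cannot exclude it.

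A concrete example: blow up $x\in C\times\{p\}$, then a general point of $E_1$, then the point $E_1\cap E_2$. You get the chain $Q-E_1-E_3-E_2$ with self-intersections $-1,-3,-1,-2$. Its only $\psi$-exceptional $(-1)$-curve is $E_3$, which lies in the middle. This is perfectly compatible with $\phi_*E\neq0$ for every $\psi$-exceptional $(-1)$-curve, yet it is not the configuration claimed. Your Step 3 numerics would also fail here, since for instance $K_Z\cdot E_1=1\neq 0$.

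What rules this out is multiplicity, and this is the input the paper uses when it notes that since $W_p$ is integral, $Z_p$ is reduced along $R$. You already have this from Step 1 if you read $\phi_*[Z_p]=[W_p]$ with multiplicities: $Z_p$ need not be reduced, but the dominant component $R$ must occur in it with multiplicity one.

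Your tree argument shows that each blowup after the first is centred on the newest exceptional curve $E_i$. The multiplicity of $E_{i+1}$ in the fibre is then the sum of the multiplicities of the fibre components through the centre. So multiplicities are non-decreasing along the sequence, and $E_{i+1}$ has multiplicity at least $2$ as soon as the centre is a satellite point, i.e.\ lies on $Q$ or on an earlier $E_j$. In the example above, $Z_p=Q+E_1+E_2+2E_3$.

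Since $R$ is the last curve and has multiplicity one, every centre must be a free point of the newest curve. That yields the chain $Q-R_1-\dots-R_\mu$ with $R_\mu$ at the end, $R_i^2=-2$ for $i<\mu$, $Q^2=-1$, and $Z_p$ reduced. Only then is your Step 3 computation justified.
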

    \begin{proof}
        The strict transform of $C\times\{p\}\subset Y$ in $Z$ must be contracted to $W$ since $W_p$ is singular and hence has smaller geometric genus than $C$. Since we chose $\phi\colon Z\to W$ to be the minimal resolution of $f$,  there exists a unique $(-1)$-curve $R$ in $Z_p$ such that $\psi_* R = 0$ and $\phi_* R = W_p$. In addition, since $W_p$ is integral, $Z_p$ is reduced along $R$. It is not hard to see that
        \[
            Z_p = Q + R_1 + R_2 + ... + R_\mu
        \]
        for some $\mu\in \ZZ^+$, where $Q\cong C$, $R_i \cong \PP^1$,
        \[
            \begin{aligned}
                QR_1     & = R_1R_2 = ... = R_{\mu-1} R_{\mu} = 1,    \\
                \phi_* Q & = \phi_* R_1 = ... = \phi_* R_{\mu-1} = 0,
                \text{ and } \phi_* R_\mu = W_p.
            \end{aligned}
        \]
        One then computes from Lemma \ref{K3ISOTRIVIALLEMNODALSING} that
        \[
            \begin{aligned}
                K_Z Q          & = 2g - 1,\ K_ZR_1 = K_Z R_2 = ... = K_ZR_{\mu-1} = 0,\ K_Z R_\mu = -1                    \\
                Q^2            & = R_\mu^2 = -1,\ R_1^2 = R_2^2 = ... = R_{\mu-1}^2 = -2                                  \\
                \Gamma_{s,m} Q & = g +m, \text{ and } \Gamma_{s,m} R_1 = \Gamma_{s,m} R_2 = ... = \Gamma_{s,m} R_\mu = 0.
            \end{aligned}
        \]
        Since $(K_X+W) \phi_* R_\mu = (K_X + W)W_p = 2g-2 > 0$, it follows from Lemma \ref{K3ISOTRIVIALE815} that $\Delta_p$ is supported on $Q\cup R_1\cup ...\cup R_{\mu-1}$. Using \eqref{K3ISOTRIVIALE810} to compute $\Delta_p$, we obtain exactly \eqref{K3ISOTRIVIALE812}. Because all coefficients in \eqref{K3ISOTRIVIALE812} are integers, we have $\lfloor \Delta_p \rfloor = \Delta_p$. Also notice that since $\phi_* \Delta_p = 0$ and $\calL$ is a pullback, $\calL$ restricts trivially to $\Delta_p$, so $\OO_{\Delta_p} \otimes \OO_Z(\calL) \cong \OO_{\Delta_p}$.
    \end{proof}

    Note that $\Delta$ has simple normal crossing support since it is contained in the fibres of $Z/D$. So by Kawamata--Viehweg vanishing (cf.\ \cite{E-V} or more explicitly \cite[Theorem 9.1.18]{lazarsfeldpos2}), we have
    \[
        \HH^i(\calL - \Gamma_{s,m} - \lfloor \Delta\rfloor) =
        \HH^i(K_Z + P + (\Delta - \lfloor \Delta \rfloor)) = 0
    \]
    for $i\ge 1$. Thus, from the exact sequence
    \[
        \begin{tikzcd}[column sep=1em]
            0 \ar{r} & \HH^i(\calL - \Gamma_{s,m} - \lfloor \Delta\rfloor) \ar{r} & \HH^i(\calL - \Gamma_{s,m}) \ar{r} & \HH^i(\OO_{\lfloor \Delta\rfloor}(\calL - \Gamma_{s,m})) \ar{r} & 0
        \end{tikzcd}
    \]
    we see that
    \[
        \HH^i(\calL - \Gamma_{s,m}) = \begin{cases}
            \HH^1(\OO_{\lfloor \Delta\rfloor}(\calL - \Gamma_{s,m})) & \text{for } i = 1 \\
            0                                                        & \text{for } i=2.
        \end{cases}
    \]
    Thus,
    \[
        h^0(\calL - \Gamma_{s,m}) = \chi(\OO_Z(\calL - \Gamma_{s,m}))
        + h^1(\OO_{\lfloor \Delta\rfloor}(\calL - \Gamma_{s,m}))
    \]
    for $\deg A \gg n\gg m$. As the family is flat, $\chi(\OO_Z(\calL - \Gamma_{s,m}))$ is constant over $s\in U$. Therefore, to prove \eqref{K3ISOTRIVIALE809}, it suffices to prove
    \[
        h^1(\OO_{\lfloor \Delta\rfloor}(\calL - \Gamma_{s,m}))
        \ne h^1(\OO_{\lfloor \Delta\rfloor}(\calL - \Gamma_{t,m}))
    \]
    for some $m\in \ZZ^+$ and $s,t\in U$. For a point $p\in D$, let $Z_p$ be the fibre of $Z/D$ and let $\Delta_p = \Delta\cap Z_p$. Then
    \[
        h^1(\OO_{\lfloor \Delta\rfloor}(\calL - \Gamma_{s,m}))
        = \sum_{p\in D} h^1(\OO_{\lfloor \Delta_p\rfloor}(\calL - \Gamma_{s,m})).
    \]
    Clearly, each $h^1(\OO_{\lfloor \Delta_p\rfloor}(\calL - \Gamma_{s,m}))$ is upper-semicontinuous in $s\in U$. So it suffices to prove
    \begin{equation}\label{K3ISOTRIVIALE811}
        h^1(\OO_{\lfloor \Delta_p\rfloor}(\calL - \Gamma_{s,m}))
        \ne h^1(\OO_{\lfloor \Delta_p\rfloor}(\calL - \Gamma_{t,m}))
    \end{equation}
    for some $p\in D$, $m\in \ZZ^+$ and $s,t\in U$.

    By Lemma \ref{K3ISOTRIVIALLEMDELTA}, if $W_p$ is integral and singular for some $p \in D$, the equation \eqref{K3ISOTRIVIALE811} reduces to
    \begin{equation}\label{K3ISOTRIVIALE811B}
        h^1(\OO_{\Delta_p}(-\Gamma_{s,m})) \ne h^1(\OO_{\Delta_p}(-\Gamma_{t,m})).
    \end{equation}

    \medskip\noindent\textbf{The case $\mu=1$.}
    Let us first prove \eqref{K3ISOTRIVIALE811B} for $\mu=1$. Although this is not logically necessary, we feel that this simple case will help the reader to understand the proof better.

    When $\mu = 1$,
    \[\Delta_p = (3g + m - 1) Q\]
    by \eqref{K3ISOTRIVIALE812}.

    We can compute $h^1(\OO_{nQ}(-\Gamma_{s,m}))$ as a combination of the following lemmas.

    \begin{Lemma}\label{LEM:K3ISOTRIVIALE838}
        There are short exact sequences
        \[
            \begin{tikzcd}[column sep=17pt]
                0 \ar{r} & \OO_C(kq - \sigma)
                \ar{r} & \OO_{(k+1)Q}(-\Gamma_{s,m})
                \ar{r} & \OO_{kQ}(-\Gamma_{s,m}) \ar{r} & 0
            \end{tikzcd}
        \]
        on $Z$ for $k \ge 0$, where
        \[
            \sigma = s_1 + s_2 + ... + s_g + m s_{g+1}.
        \]
    \end{Lemma}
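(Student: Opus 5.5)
The exact sequence will come from the standard filtration of the non-reduced curve $(k+1)Q$ by the reduced curve $Q$. Since $Q$ is a smooth Cartier divisor on the smooth surface $Z$, the inclusion $\OO_Z(-(k+1)Q)\subset \OO_Z(-kQ)$ gives
\[
0\to \OO_Z(-kQ)/\OO_Z(-(k+1)Q) \to \OO_{(k+1)Q}\to \OO_{kQ}\to 0 .
\]
The kernel is $\OO_Z(-kQ)\otimes\OO_Q = \OO_Q(-kQ|_Q)$. Because $\Gamma_{s,m}$ is a Cartier divisor, I will tensor this sequence with the line bundle $\OO_Z(-\Gamma_{s,m})$; exactness is preserved since the line bundle is locally free. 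This leaves the kernel identified as $\OO_Q(-kQ|_Q - \Gamma_{s,m}|_Q)$, and it remains to identify both restrictions under $Q\cong C$, where the isomorphism is induced by $\psi$ and $\pi_C$.

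For $\Gamma_{s,m}|_Q$: by construction $\Gamma_{s,m}=\pi_C^*\sigma$, and $\pi_C|_Q\colon Q\to C$ is the isomorphism, since $Q$ is the strict transform of $C\times\{p\}$. Hence $\Gamma_{s,m}|_Q = \sigma$. I need $s_i\in C^\circ$, so that the $\Gamma_{s_i}$ are sections meeting $Q$ transversally at the single point over $s_i$. Even without this, the pullback statement $\OO_Z(\pi_C^*\sigma)|_Q=(\pi_C|_Q)^*\OO_C(\sigma)$ holds as line bundles.

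For $-Q|_Q$: the key point is that $Z_p = Q+R_1+\dots+R_\mu$ is a fibre, so $\OO_Z(Z_p)|_Q\cong \OO_Q$. This gives $\OO_Z(-Q)|_Q \cong \OO_Z(R_1+\dots+R_\mu)|_Q = \OO_Q(q)$, where $q=Q\cap R_1$ is the single transversal intersection point ($QR_1=1$, $QR_j=0$ for $j\ge2$). Therefore $-kQ|_Q = kq$, and the kernel is $\OO_C(kq-\sigma)$ after identifying $q$ with its image in $C$. This $q$ is the attaching point of the rational tail mentioned in the introduction.

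I expect no serious obstacle. The only care needed is to make sure the chain structure from Lemma \ref{K3ISOTRIVIALLEMDELTA} (in the case $\mu=1$, $R_1=R_\mu$) gives exactly one point of $Q$ meeting the other components, with multiplicity one. It also helps that $Q$ is smooth, so that $\OO_Z(-kQ)/\OO_Z(-(k+1)Q)$ is a line bundle on $Q$. The case $k=0$ is the trivial sequence $0\to\OO_C(-\sigma)\to \OO_Q(-\Gamma_{s,m})\to 0\to 0$.
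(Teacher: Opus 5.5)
Your proposal is correct and matches the paper's proof: both use the filtration $\OO_Z\supset\OO_Z(-Q)\supset\OO_Z(-2Q)\supset\cdots$ twisted by $\OO_Z(-\Gamma_{s,m})$, and both identify the kernel using that the fibre $Z_p$ restricts trivially to $Q$, so $\OO_Q(-Q)\cong\OO_Q(q)$, together with $\Gamma_{s,m}|_Q=(\pi_C|_Q)^*\sigma$. You spell out the restriction of $\Gamma_{s,m}$ and allow a chain $R_1,\dots,R_\mu$, whereas the paper leaves the former implicit and states the lemma only for $Z_p=Q+R_1$; the argument is otherwise the same.
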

    \begin{proof}
        From the filtration
        \begin{equation}\label{K3ISOTRIVIALE813}
            \OO_Z \supset \OO_Z(-Q) \supset \OO_Z(-2Q) \supset \cdots,
        \end{equation}
        the kernel is the sheaf
        \[
            \dfrac{\OO_Z(-k Q)}{\OO_Z(-(k+1)Q)}\otimes  \OO_Z(-\Gamma_{s,m}).
        \]
        Since the fibre $Z_p = Q+R_1$ has trivial normal bundle in $Z$, we have $\OO_Q(Q+R_1) \cong \OO_Q$, giving $\OO_Q(Q) \cong \OO_Q(-R_1)$. Under the isomorphism $ Q \cong C$, this gives $\OO_Q(-kQ) \cong \OO_C(kq)$ where $q = \pi_C(Q\cap R_1)$, giving the desired short exact sequence.
    \end{proof}

    By iterating Lemma \ref{LEM:K3ISOTRIVIALE838} above, we obtain
    \begin{equation}\label{K3ISOTRIVIALE839}
        \chi(\OO_{kQ}(-\Gamma_{s,m})) = \sum_{j=0}^{k-1} \chi(\OO_C(jq - \sigma))
    \end{equation}
    for $k\ge 0$, which is independent of $s\in U$.

    \begin{Lemma}\label{LEM:K3ISOTRIVIALE840}
        For $s\in U$ general,
        \[
                \quad h^1(\OO_{(3g+m-1)Q}(-\Gamma_{s,m})) = - \chi(\OO_{(2g+m)Q}(-\Gamma_{s,m}))
        \]
        which is in particular independent of $s\in U$.
    \end{Lemma}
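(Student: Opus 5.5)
The plan is to work entirely on the thickenings $kQ$, using the filtration of Lemma \ref{LEM:K3ISOTRIVIALE838}. Write $N = 3g+m-1$. The graded pieces of $\OO_{NQ}(-\Gamma_{s,m})$ are $\OO_C(jq-\sigma)$ for $0\le j\le N-1$, of degree $j-g-m$. Since Euler characteristics are additive, $\chi(\OO_{kQ}(-\Gamma_{s,m}))$ is independent of $s$ by \eqref{K3ISOTRIVIALE839}. So the content of the lemma is a statement about $h^0$. Precisely, it suffices to prove the two claims
\[
h^0(\OO_{(2g+m)Q}(-\Gamma_{s,m})) = 0
\quad\text{and}\quad
h^1(\OO_{NQ}(-\Gamma_{s,m})) = h^1(\OO_{(2g+m)Q}(-\Gamma_{s,m}))
\]
for general $s\in U$. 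The second claim is where all the difficulty lies.

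\emph{First claim.} By the filtration, it suffices that $h^0(\OO_C(jq-\sigma))=0$ for $0\le j\le 2g+m-1$. Write $jq-\sigma = (jq - m s_{g+1}) - (s_1+\dots+s_g)$. The divisor $jq-ms_{g+1}$ has degree $j-m\le 2g-1$, so $h^0(jq-ms_{g+1})\le g$ by Riemann--Roch when the degree is $2g-1$, by Clifford in the special range, and trivially when the degree is negative. Imposing vanishing at $g$ general points $s_1,\dots,s_g$ cuts the dimension of sections by one at each point until it reaches zero, so every section is killed. Doing this for each of the finitely many $j$ and intersecting the resulting open sets with $U$ gives the claim for general $s$. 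Consequently $h^1(\OO_{(2g+m)Q}(-\Gamma_{s,m})) = -\chi(\OO_{(2g+m)Q}(-\Gamma_{s,m}))$.

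\emph{Second claim.} The natural surjection $\OO_{NQ}\to\OO_{(2g+m)Q}$ induces a surjection on $H^1$, because $H^2$ vanishes on curves. It therefore remains to show that the kernel $K$ of this map contributes nothing to $H^1$, i.e.\ that $H^1(K)\to H^1(\OO_{NQ}(-\Gamma_{s,m}))$ is zero. The kernel $K$ is filtered by $\OO_C(jq-\sigma)$ for $2g+m\le j\le N-1$, whose degrees run from $g$ to $2g-2$. These pieces may individually carry $H^1$. For each such $j$, I will show that the connecting map $H^0(\OO_{jQ}(-\Gamma_{s,m}))\to H^1(\OO_C(jq-\sigma))$ is surjective, equivalently that $H^1(\OO_C(jq-\sigma))\to H^1(\OO_{(j+1)Q}(-\Gamma_{s,m}))$ vanishes, inducting upward from $j=2g+m$.

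What I would try first is Serre duality on the thickening $(j+1)Q$. Using $\omega_{kQ}=\OO_{kQ}(K_Z+kQ)$ together with $\OO_Q(Q)\cong\OO_C(-q)$, the dual statement becomes an injectivity/extension statement for sections of $\OO_C(K_C-jq+\sigma)$ extending to the thickening. Here I would exploit the structure of $\Delta_p$ from the Zariski decomposition. The coefficient $N$ is exactly the one for which $(\calL - K_Z - \Gamma_{s,m} - kQ)\cdot Q\ge 0$ precisely when $k\ge N$. So Kawamata--Viehweg does not directly apply at level $k=2g+m$, and a finer argument is needed.

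A fallback is to compare with the global identity $h^1(\OO_{NQ}(-\Gamma_{s,m})) = h^1(Z,\calL-\Gamma_{s,m})$ established above. Combining it with upper semicontinuity in $s$ and the Jacobian genericity input of Proposition \ref{K3ISOTRIVIALPROPJACOBIAN} for the special point $q$, the goal is to show the top-range pieces lift. I expect this vanishing of the contribution of degrees $g,\dots,2g-2$ to be the main obstacle.
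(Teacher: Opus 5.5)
Your reduction to the two claims is correct, and your first claim is correctly proved. The Clifford and general-points argument is a valid, if roundabout, substitute for the paper's appeal to the cohomology of a general line bundle. There is, however, a genuine gap at the second claim, and it comes from a false premise.

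You write that the pieces $\OO_C(jq-\sigma)$, for $2g+m\le j\le 3g+m-2$, ``may individually carry $H^1$''. For general $s$ they do not.

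\begin{itemize}
\item \textbf{Genericity.} As $(s_1,\dots,s_g)$ varies, the class $s_1+\dots+s_g$ sweeps out all of $\Pic^g(C)$ by Jacobi inversion. So for general $s$, each of these finitely many bundles $\OO_C(jq-\sigma)$ is a general point of $\Pic^{j-g-m}(C)$.
\item \textbf{Vanishing.} Since $j-g-m\ge g$, such a bundle is nonspecial, so $h^1(\OO_C(jq-\sigma))=0$. Your own method gives the same conclusion: $jq-ms_{g+1}$ has degree $j-m\ge 2g$, hence $h^0=j-m-g+1\ge g+1$. Then $g$ general points impose independent conditions, so $h^0(\OO_C(jq-\sigma))=\deg-g+1$, and Riemann--Roch gives $h^1=0$.
\item \textbf{Conclusion.} The long exact sequences from Lemma \ref{LEM:K3ISOTRIVIALE838} show that $H^1(\OO_{(j+1)Q}(-\Gamma_{s,m}))\to H^1(\OO_{jQ}(-\Gamma_{s,m}))$ is an isomorphism for every such $j$. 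Equivalently, your kernel $K$ has $H^1(K)=0$. Hence $h^1$ is constant from level $2g+m$ up to $3g+m-1$.
\end{itemize}

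Combined with your first claim, this is exactly the paper's proof.

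As written, though, the second claim is never established. The routes you sketch (Serre duality on the thickenings, the Zariski decomposition and Kawamata--Viehweg remarks, the fallback via the global $h^1$ and semicontinuity) are not carried out, and none is needed. The fallback also points the wrong way. Proposition \ref{K3ISOTRIVIALPROPJACOBIAN} is used in the paper to produce \emph{special} $s$ for which $h^0(\OO_C((2g+m-1)q-\sigma))>0$, so that $h^1$ jumps above the generic value (Lemmas \ref{LEM:K3ISOTRIVIALE841} and \ref{LEM:K3ISOTRIVIALE842}). It cannot help pin down the generic value asserted in this lemma.
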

    \begin{proof}
        By Riemann-Roch, a general line bundle of degree $d$ on a curve of genus $g$ has exactly $\max(0, d - g + 1)$ global sections. Thus,
        \[
            h^0(\OO_C(kq - \sigma))
            = \max(0, k -2g -m +1)
        \]
        for $s\in U$ general. In other words,
        \[
            \text{either }
            h^0(\OO_C(kq - \sigma))
            = 0 \text{ or }
            h^1(\OO_C(kq - \sigma))
            = 0
        \]
        for each $0\le k\le 3g+m-2$ and $s\in U$ general.
        From Lemma \ref{LEM:K3ISOTRIVIALE838},
        \[
            \begin{aligned}
                h^0(\OO_{Q}(-\Gamma_{s,m}))
                                                  & = h^0(\OO_{2Q}(-\Gamma_{s,m}))
                \\ & = \ldots \\ & = h^0(\OO_{(2g + m)Q}(-\Gamma_{s,m})) \\ & = 0                           \\
                h^1(\OO_{(2g+m)Q}(-\Gamma_{s,m})) & =
                h^1(\OO_{(2g+m+1)Q}(-\Gamma_{s,m})) \\ & = \ldots                                 \\
                                                  & = h^1(\OO_{(3g+m-1)Q}(-\Gamma_{s,m}))
            \end{aligned}
        \]
        and hence
        \[
            \begin{aligned}
                 & \quad h^1(\OO_{(3g+m-1)Q}(-\Gamma_{s,m})) = h^1(\OO_{(2g + m)Q}(-\Gamma_{s,m})) \\
                 & = h^0(\OO_{(2g+m)Q}(-\Gamma_{s,m})) - \chi(\OO_{(2g+m)Q}(-\Gamma_{s,m}))        \\
                 & = - \chi(\OO_{(2g+m)Q}(-\Gamma_{s,m}))
            \end{aligned}
        \]
        so that the equality in Lemma \ref{LEM:K3ISOTRIVIALE840} holds.
    \end{proof}

    \begin{Lemma}\label{LEM:K3ISOTRIVIALE841}
        For arbitrary $s\in U$, we have
        \[
            \begin{aligned}
                h^1(\OO_{(3g+m-1)Q}(-\Gamma_{s,m})) &\ge h^0(\OO_C((2g+m-1)q - \sigma)) \\
                &\quad - \chi(\OO_{(2g+m)Q}(-\Gamma_{s,m})).
            \end{aligned}
        \]
    \end{Lemma}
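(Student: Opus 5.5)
We use the filtration of Lemma \ref{LEM:K3ISOTRIVIALE838} and follow the proof of Lemma \ref{LEM:K3ISOTRIVIALE840}, keeping track of the one term that can now be nonzero. Set $N = 2g+m$.

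The first step is to show that $h^1$ does not decrease along the filtration from $NQ$ up to $(3g+m-1)Q$. Taking cohomology of the sequences of Lemma \ref{LEM:K3ISOTRIVIALE838} gives an exact sequence
\[
\HH^1(\OO_C(kq-\sigma)) \to \HH^1(\OO_{(k+1)Q}(-\Gamma_{s,m})) \to \HH^1(\OO_{kQ}(-\Gamma_{s,m})) \to 0,
\]
since $C$ is a curve and $\HH^2$ vanishes. Hence $h^1(\OO_{(k+1)Q}(-\Gamma_{s,m})) \ge h^1(\OO_{kQ}(-\Gamma_{s,m}))$ for every $k$, with no genericity assumption on $s$. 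Iterating from $k=N$ to $k=3g+m-2$ gives
\[
h^1(\OO_{(3g+m-1)Q}(-\Gamma_{s,m})) \ge h^1(\OO_{NQ}(-\Gamma_{s,m})) = h^0(\OO_{NQ}(-\Gamma_{s,m})) - \chi(\OO_{NQ}(-\Gamma_{s,m})).
\]

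It therefore suffices to prove $h^0(\OO_{NQ}(-\Gamma_{s,m})) \ge h^0(\OO_C((N-1)q-\sigma))$. The last sequence, with $k = N-1$, gives
\[
0\to \HH^0(\OO_C((N-1)q-\sigma)) \to \HH^0(\OO_{NQ}(-\Gamma_{s,m})) \to \HH^0(\OO_{(N-1)Q}(-\Gamma_{s,m})),
\]
and the injectivity of the first map is exactly the required inequality.

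The argument has no genuine obstacle; the only points to check are the exactness and the identification of the graded pieces. Both are already settled in Lemma \ref{LEM:K3ISOTRIVIALE838}: the sheaves are supported on the curve $Q\cong C$, and the $k$-th graded piece is $\OO_Q(-kQ-\Gamma_{s,m}) \cong \OO_C(kq-\sigma)$, using $\Gamma_{s,m}|_Q=\sigma$ for $s_i\in C^\circ$. Note that $(N-1)q-\sigma$ has degree $g-1$. This is where the Brill--Noether special divisor from the introduction enters, and presumably Proposition \ref{K3ISOTRIVIALPROPJACOBIAN} is used afterwards to make this term positive for suitable $s$.
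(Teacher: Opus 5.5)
Your proof is correct and follows the paper's argument step for step. You use the monotonicity of $h^1$ along the filtration from $(2g+m)Q$ up to $(3g+m-1)Q$, then $h^1=h^0-\chi$ on $(2g+m)Q$, and finally the injection $\HH^0(\OO_C((2g+m-1)q-\sigma))\hookrightarrow\HH^0(\OO_{(2g+m)Q}(-\Gamma_{s,m}))$ from the sequence of Lemma \ref{LEM:K3ISOTRIVIALE838}. You also spell out the justifications the paper leaves implicit: surjectivity on $\HH^1$ and $h^1=h^0-\chi$ both rest on the vanishing of $\HH^2$ for sheaves supported on a curve.
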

    \begin{proof}
        For arbitrary $s\in U$, we have from Lemma \ref{LEM:K3ISOTRIVIALE838} that
        \[
            h^0(\OO_C((2g+m-1)q - \sigma)) \le h^0(\OO_{(2g + m)Q}(-\Gamma_{s,m}))
        \]
        and
        \begin{align*}
            h^1(\OO_{(2g+m)Q}(-\Gamma_{s,m})) & \le
            h^1(\OO_{(2g+m+1)Q}(-\Gamma_{s,m})) \le \ldots  \\
            & \le h^1(\OO_{(3g+m-1)Q}(-\Gamma_{s,m})).
        \end{align*}
       In particular,
       \[
            \begin{aligned}
                 & \quad h^1(\OO_{(3g+m-1)Q}(-\Gamma_{s,m})) \ge h^1(\OO_{(2g + m)Q}(-\Gamma_{s,m})) \\
                 & = h^0(\OO_{(2g+m)Q}(-\Gamma_{s,m})) - \chi(\OO_{(2g+m)Q}(-\Gamma_{s,m}))          \\
                 & \ge h^0(\OO_C((2g+m-1)q - \sigma)) - \chi(\OO_{(2g+m)Q}(-\Gamma_{s,m})).
            \end{aligned}
        \]
        and hence the inequality in Lemma \ref{LEM:K3ISOTRIVIALE841} follows.
    \end{proof}
    Comparing Lemma \ref{LEM:K3ISOTRIVIALE840} and Lemma \ref{LEM:K3ISOTRIVIALE841}, we see that \eqref{K3ISOTRIVIALE811} holds as long as we can find $m\in \ZZ^+$ and $s\in U$ satisfying the following Lemma.

    \begin{Lemma}\label{LEM:K3ISOTRIVIALE842}
        There exist $m\in \ZZ^+$ and $s\in U$ such that
        \[
            h^0(\OO_C((2g+m-1)q - \sigma)) > 0.
        \]
    \end{Lemma}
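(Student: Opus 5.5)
We want $m$ and $s\in U$ with $h^0(\OO_C((2g+m-1)q-\sigma))>0$, where $\sigma=s_1+\dots+s_g+ms_{g+1}$ and $q\in C$ is the fixed point $\pi_C(Q\cap R_1)$. The divisor $(2g+m-1)q-\sigma$ has degree $g-1$. We rewrite it as
\[
(2g+m-1)q-\sigma = \big((2g-1)q - s_1-\dots-s_g\big) + m\,(q-s_{g+1}).
\]
Put $D_s:=(2g-1)q-s_1-\dots-s_g\in\Pic^{g-1}(C)$. The question then becomes whether the point $s_{g+1}$ lies in the set $\Sigma_m=\{x\in C: h^0(D_s+m(q-x))>0\}$ studied in Section~\ref{sec:thetadiv}, with $p=q$.

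First, we choose $s_1,\dots,s_g$ so that $D_s$ is very general in $\Pic^{g-1}(C)$. The map $C^g\to\Pic^{g-1}(C)$, $(s_1,\dots,s_g)\mapsto(2g-1)q-\sum s_i$, is surjective by Jacobi inversion, since $C^{(g)}\to\Pic^g(C)$ is surjective. The preimage of a very general point is therefore the complement of a countable union of proper subvarieties of $C^g$. Because $U\subset C^{g+1}$ is a nonempty Zariski open set, its projection to $C^g$ contains a Zariski open set. Hence we can pick $(s_1,\dots,s_g)$ with $D_s$ very general and with the fibre $\{x:(s_1,\dots,s_g,x)\in U\}$ a nonempty, hence cofinite, open subset of $C$. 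We also need the $s_i$ to be distinct and to lie in $C^\circ$; these are Zariski-open conditions.

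Second, we apply the second part of Proposition~\ref{K3ISOTRIVIALPROPJACOBIAN} with $D=D_s$ and $p=q$. It gives $|\Sigma_m|\to\infty$ as $m\to\infty$. The complement of the fibre of $U$ over $(s_1,\dots,s_g)$ is a fixed finite set $F$; we also add to $F$ the points $s_1,\dots,s_g$, the point $q$, and $C\setminus C^\circ$. So for $m$ large, $\Sigma_m\not\subset F$. Choosing $s_{g+1}\in\Sigma_m\setminus F$ gives $s=(s_1,\dots,s_{g+1})\in U$ with $h^0(\OO_C((2g+m-1)q-\sigma))>0$, as required.

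The main point requiring care is the first step, the compatibility of "very general" with membership in $U$. This is handled because $U$ is Zariski open rather than merely the complement of a countable union; that is exactly what Lemma~\ref{K3ISOTRIVIALTHMNORMALCLAIM000} supplies. A further subtlety is that $U$ is fixed independently of $m$, which is what lets us choose $m$ after $U$. Finally, the required conditions $n\gg m$ and $\deg A\gg n$ in the ambient argument are imposed only after $m$ and $s$ are fixed, so they cause no conflict.
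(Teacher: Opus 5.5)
Your proof is correct and follows the paper's argument essentially verbatim. Like the paper, you rewrite the divisor as $\big((2g-1)q-\sum_{i\le g}s_i\big)+m(q-s_{g+1})$, fix a very general $(s_1,\dots,s_g)$ so that the second part of Proposition~\ref{K3ISOTRIVIALPROPJACOBIAN} applies, and use that the fibre of the Zariski open set $U$ over $(s_1,\dots,s_g)$ is cofinite in $C$ to pick $s_{g+1}\in\Sigma_m$ for $m\gg0$; your check that this fibre is nonempty (via openness of the projection $C^{g+1}\to C^g$) makes explicit a point the paper leaves implicit.
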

    \begin{proof}
        Note that
        \[
            (2g+m-1)q - \sigma = ((2g-1)q - s_1 - s_2 - ... - s_g) + m(q-s_{g+1}).
        \]
        For $(s_1,s_2,...,s_g)\in C^g$ very general, $(2g-1)q - \sum_{i=1}^g s_i$ is a very general divisor on $C$ of degree $g-1$.
        So by \eqref{K3ISOTRIVIALTHMGLOBALE035} in Proposition \ref{K3ISOTRIVIALPROPJACOBIAN}, for a fixed very general $(s_1,s_2,...,s_g)\in C^g$,
        \[
            \lim_{m\to\infty} \Big |\big\{r\in C: h^0(\OO_C((2g+m-1)q - \sum_{i=1}^g s_i - mr)) > 0 \big\}\Big| = \infty.
        \]
        Let us fix a very general $(s_1,s_2,...,s_g)\in C^g$ with the above property and let $U_{(s_1,s_2,...,s_g)}$ be the fibre of $U$ over $(s_1,s_2,...,s_g)\in C^g$ under the projection $U\to C^g$ sending $(s_1,s_2,...,s_g,s_{g+1})$ to $(s_1,s_2,...,s_g)$. Since $U$ is a Zariski open set, $U_{(s_1,s_2,...,s_g)}$ is a Zariski open set in $C$. So
        \[
            U_{(s_1,s_2,...,s_g)}\cap \big\{r\in C: h^0(\OO_C((2g+m-1)q - \sum_{i=1}^g s_i - mr)) > 0 \big\} \ne \emptyset
        \]
        for $m\gg 0$. That is, there exists $(s_1,s_2,...,s_g,s_{g+1})\in U$ such that $h^0(\OO_C((2g+m-1)q - \sigma)) > 0$ for some $m\gg 0$, proving the claim.
    \end{proof}

    By Lemma \ref{LEM:K3ISOTRIVIALE842}, this proves our overall claim \eqref{K3ISOTRIVIALE811} for $\mu = 1$.

   \medskip\noindent\textbf{The general $\mu$ case.}
   For arbitrary $\mu$, we use the following filtration for $\Delta_p$ given by \eqref{K3ISOTRIVIALE812}:
    \[
        \begin{aligned}
            \OO_Z = \OO_Z(-G_0) & \supset \OO_Z(-G_1)\supset ... \supset \OO_Z(-G_{\mu})                        \\
                                & \supset \OO_Z (-G_{\mu+1}) \supset ... \supset \OO_Z(-G_{2\mu}) \supset ...   \\
                                & \supset \OO_Z(-G_{(3g+m-2)\mu + 1})\supset ...\supset \OO_Z(-G_{(3g+m-1)\mu})
        \end{aligned}
    \]
    where $G_0 = 0$ and
    \[
        G_{n\mu+i} - G_{n\mu + i - 1} = Q + R_1 + R_2 + ... + R_{\mu - i}
    \]
    for $0\le n \le 3g+m-2$ and $1\le i \le \mu$. From this filtration, we have short exact sequences:
    \[
        \begin{tikzcd}[column sep=small]
            & \OO_{G_{k+1}- G_k}(-\Gamma_{s, m}-G_k)\ar[equal]{d}\\
            0 \ar{r} & \dfrac{\OO_Z(-\Gamma_{s,m}-G_{k})}{\OO_Z(-\Gamma_{s,m}-G_{k+1})} \ar{r} & \OO_{G_{k+1}}(-\Gamma_{s,m}) \ar{r} & \OO_{G_k}(-\Gamma_{s,m}) \ar{r} & 0
        \end{tikzcd}
    \]
    for all $k\ge 0$.
    For $k = n\mu+i - 1$, we have
    \[
        \OO_{G_{k+1}- G_k}(-\Gamma_{s,m} - G_k) = \OO_{Q+R_1+...+R_{\mu - i}}(-\Gamma_{s,m} - G_k)
    \]
    with
    \[
        \begin{aligned}
            \OO_Q (-\Gamma_{s,m} - G_k)     & \cong \OO_C(n q - \sigma)
            \hspace{12pt}\text{and}                                     \\
            \OO_{R_j} (-\Gamma_{s,m} - G_k) & \cong \OO_{R_j}
            \hspace{12pt}\text{for } j=1,2,...,\mu-i
        \end{aligned}
    \]
    where $q = \pi_C(R_1\cup R_2\cup ...\cup R_\mu)$. Therefore,
    \[
        h^\bullet(\OO_{G_{k+1}- G_k}(-\Gamma_{s,m} - G_k)) = h^\bullet(\OO_C(nq - \sigma))
    \]
    for $k = n\mu+i - 1$, $0\le n \le 3g + m -2$ and $1\le i \le \mu$. Then the rest of the argument is the same as that for $\mu=1$.
\end{proof}


\begin{thebibliography}{McK93}

\bibitem[Bak25]{bakker}
Benjamin Bakker.
\newblock A short proof of a conjecture of {Matsushita}.
\newblock {\em Advances in Mathematics}, 482, 2025.

\bibitem[Bea90]{beauville1990}
Arnaud Beauville.
\newblock Sur les hypersurfaces dont les sections hyperplanes sont \`a module
  constant.
\newblock In {\em The Grothendieck Festschrift, Volume I}, pages 121--133.
  Birkh\"auser, 1990.

\bibitem[Bea25]{beauville2025maximal}
Arnaud Beauville.
\newblock Maximal variation of linear systems.
\newblock \url{https://arxiv.org/abs/2511.22329}, 2025.

\bibitem[BP26]{bricallipirola2026maximal}
Davide Bricalli and Gian~Pietro Pirola.
\newblock On the maximal variation problem and lefschetz pencils.
\newblock \url{https://arxiv.org/abs/2606.27893}, 2026.

\bibitem[CDS20]{CDS}
Ciro Ciliberto, Thomas Dedieu, and Edoardo Sernesi.
\newblock Wahl maps and extensions of canonical curves and {$K3$} surfaces.
\newblock {\em J. Reine Angew. Math.}, 761:219--245, 2020.

\bibitem[CFZ15]{CFZ}
Ciro Ciliberto, Flaminio Flamini, and Mikhail Zaidenberg.
\newblock Genera of curves on a very general surface in {$\mathbb{P}^3$}.
\newblock {\em Int. Math. Res. Not. IMRN}, (22):12177--12205, 2015.

\bibitem[CG22]{maxmoduli}
Xi~Chen and Frank Gounelas.
\newblock Curves of maximal moduli on {K3} surfaces.
\newblock {\em Forum of Mathematics, Sigma}, 10:e36, 2022.

\bibitem[CGL22]{regenerationinfinite}
Xi~Chen, Frank Gounelas, and Christian Liedtke.
\newblock Curves on {K}3 surfaces.
\newblock {\em Duke Math. J.}, 171(16):3283--3362, 2022.

\bibitem[DH22]{DuttaHuybrechts}
Yajnaseni Dutta and Daniel Huybrechts.
\newblock Maximal variation of curves on {K3} surfaces.
\newblock {\em Tunisian Journal of Mathematics}, 4(3):443--464, 2022.

\bibitem[DS23]{dedieu-sernesi}
Thomas Dedieu and Edoardo Sernesi.
\newblock Deformations and extensions of {G}orenstein weighted projective
  spaces.
\newblock In {\em The art of doing algebraic geometry}, Trends Math., pages
  119--143. Birkh\"auser/Springer, Cham, 2023.

\bibitem[EV92]{E-V}
H\'el\`ene Esnault and Eckart Viehweg.
\newblock {\em Lectures on Vanishing Theorems}.
\newblock DMV Seminar Vol 20. Springer-Verlag, Berlin-New York, 1992.

\bibitem[Gar17]{garbagnati}
Alice Garbagnati.
\newblock On {K}3 surface quotients of {K}3 or {A}belian surfaces.
\newblock {\em Canad. J. Math.}, 69(2):338--372, 2017.

\bibitem[Har77]{Hartshorne}
Robin Hartshorne.
\newblock {\em Algebraic geometry}.
\newblock Springer-Verlag, New York, 1977.
\newblock Graduate Texts in Mathematics, No. 52.

\bibitem[Hei06]{ghein}
Georg Hein.
\newblock Restriction of stable rank two vector bundles in arbitrary
  characteristic.
\newblock {\em Communications in Algebra}, 34(7):2319--2335, 2006.
\newblock Also preprint arXiv:math/9904102.

\bibitem[HM98]{harrismorrison}
Joe Harris and Ian Morrison.
\newblock {\em Moduli of curves}, volume 187 of {\em Graduate Texts in
  Mathematics}.
\newblock Springer-Verlag, New York, 1998.

\bibitem[Keu16]{keum2016}
JongHae Keum.
\newblock Orders of automorphisms of {K}3 surfaces.
\newblock {\em Advances in Mathematics}, 303:39--87, 2016.

\bibitem[Knu01]{knutsenk-th}
Andreas~Leopold Knutsen.
\newblock On {$k$}th-order embeddings of {$K3$} surfaces and {E}nriques
  surfaces.
\newblock {\em Manuscripta Math.}, 104(2):211--237, 2001.

\bibitem[Knu20]{knutsen-globalsections}
Andreas~Leopold Knutsen.
\newblock Global sections of twisted normal bundles of {$K3$} surfaces and
  their hyperplane sections.
\newblock {\em Atti Accad. Naz. Lincei Rend. Lincei Mat. Appl.}, 31(1):57--79,
  2020.

\bibitem[Kon92]{kondo1992}
Shigeyuki Kond\=o.
\newblock Automorphisms of algebraic {$K3$} surfaces which act trivially on
  {P}icard groups.
\newblock {\em J. Math. Soc. Japan}, 44(1):75--98, 1992.

\bibitem[Laz04]{lazarsfeldpos2}
Robert Lazarsfeld.
\newblock {\em Positivity in algebraic geometry. {II}}, volume~49 of {\em
  Ergebnisse der Mathematik und ihrer Grenzgebiete. 3. Folge. A Series of
  Modern Surveys in Mathematics [Results in Mathematics and Related Areas. 3rd
  Series. A Series of Modern Surveys in Mathematics]}.
\newblock Springer-Verlag, Berlin, 2004.
\newblock Positivity for vector bundles, and multiplier ideals.

\bibitem[McK93]{mckernan1993}
James McKernan.
\newblock Varieties with isomorphic or birational hyperplane sections.
\newblock {\em International Journal of Mathematics}, 4(1):113--125, 1993.

\bibitem[Moo17]{moonen}
Ben Moonen.
\newblock {The Deligne-Mostow List and Special Families of Surfaces}.
\newblock {\em International Mathematics Research Notices},
  2018(18):5823--5855, 03 2017.

\bibitem[Par88]{paranjape}
Kapil Paranjape.
\newblock Abelian varieties associated to certain {$K3$} surfaces.
\newblock {\em Compositio Math.}, 68(1):11--22, 1988.

\bibitem[Par94]{pardini1994}
Rita Pardini.
\newblock Some remarks on varieties with projectively isomorphic hyperplane
  sections.
\newblock {\em Geometriae Dedicata}, 52(1):15--32, 1994.

\bibitem[Sch96]{Schoen}
Chad Schoen.
\newblock Varieties dominated by product varieties.
\newblock {\em Internat. J. Math.}, 7(4):541--571, 1996.

\bibitem[Tot20]{totaro}
Burt Totaro.
\newblock Bott vanishing for algebraic surfaces.
\newblock {\em Transactions of the American Mathematical Society},
  373(5):3609--3626, 2020.

\bibitem[Xu94]{xu1994}
Geng Xu.
\newblock Subvarieties of general hypersurfaces in projective space.
\newblock {\em J. Differential Geom.}, 39(1):139--172, 1994.

\end{thebibliography}
\end{document}